\newif\ifpdf
\numberwithin{equation}{section}       
\theoremstyle{plain}
\newtheorem{Thm}{Th{\'e}or{\`e}me}[section]
\newtheorem{Prop}[Thm]{Proposition}
\newtheorem{Lem}[Thm]{Lemme}
\newtheorem{Prop-def}[Thm]{Proposition-D{\'e}finition}
\newtheorem{theoalph}{Th{\'e}or{\`e}me}
\newtheorem{coralph}[theoalph]{Corollaire}
\theoremstyle{definition}
\newtheorem{Def}[Thm]{D{\'e}finition}
\newtheorem{Rem}[Thm]{Remarque}
\newtheorem{qst}{Question}
\newcommand{\C}{{\mathbb{C}}}
\newcommand{\N}{{\mathbb{N}}}
\newcommand{\Q}{{\mathbb{Q}}}
\newcommand{\R}{{\mathbb{R}}}
\newcommand{\Z}{{\mathbb{Z}}}
\newcommand{\hprat}{{\mathbb{H}^{\mathbb{Q}}_p}}
\newcommand{\OK}{{\mathcal{O}_K}}
\newcommand{\MK}{{\mathfrak{m}_K}}
\newcommand{\HKo}{{\mathbb{H}_K^\mathrm{o}}}
\newcommand{\HK}{{\mathbb{H}_K}}
\newcommand{\HKrat}{{\mathbb{H}^{|K^*|}_K}}
\newcommand{\PK}{{\mathbb{P}^{1}_K}}
\newcommand{\PtK}{{\mathbb{P}^{1}_{\tK}}}
\newcommand{\PKber}{{\mathsf{P}^{1}_K}}
\newcommand{\Bber}{{\mathsf{B}}}
\newcommand{\AK}{{\mathsf{A}^{1}_K}}
\newcommand{\dpk}{\mathrm{d}_\PKber}
\newcommand{\dhyp}{\mathrm{d}_{\HK}}
\newcommand{\dhp}{\mathrm{d}_{\mathbb{H}_p}}
\newcommand{\de}{\delta}
\newcommand{\mdeg}{\underline{\mathrm{deg}}\,}
\newcommand{\fm}{{\mathfrak{m}}}
\newcommand{\fU}{{\mathfrak{U}}}
\newcommand{\fV}{{\mathfrak{V}}}
\newcommand{\fA}{{\mathfrak{A}}}
\newcommand{\cC}{{\mathcal{C}}}
\newcommand{\cE}{{\mathcal{E}}}
\newcommand{\cF}{{\mathcal{F}}}
\newcommand{\cO}{{\mathcal{O}}}
\newcommand{\cS}{{\mathcal{S}}}
\newcommand{\cScan}{{\mathcal{S}_\mathrm{can}}}
\newcommand{\cT}{{\mathcal{T}}}
\newcommand{\cU}{{\mathcal{U}}}
\newcommand{\cM}{{\mathcal{M}}}
\newcommand{\cP}{{\mathcal{P}}}
\newcommand{\hg}{{\hat{g}}}
\newcommand{\sB}{\mathsf{B}}
\newcommand{\tK}{\widetilde{K}}
\newcommand{\tR}{\widetilde{R}}
\renewcommand{\=}{ : = }
\renewcommand{\a}{\alpha}
\newcommand{\e}{\varepsilon}
\newcommand{\la}{\lambda}
\newcommand{\supp}{\mathrm{supp}\,}
\newcommand{\htop}{h_\mathrm{top}\,}
\newcommand{\diam}{\mathrm{diam}}
\DeclareMathOperator{\Crit}{Crit}
\DeclareMathOperator{\Jac}{Jac}
\DeclareMathOperator{\degtop}{deg_{\mathrm{top}}}
\begin{document}
%
%

\setcounter{tocdepth}{1}

\title[Dynamique mesurable ultram{\'e}trique]{Th{\'e}orie ergodique des 
fractions rationnelles sur un corps ultram{\'e}trique}

\date{\today}
\author[Charles Favre \and Juan Rivera-Letelier]{Charles Favre$^\dag$ \and Juan Rivera-Letelier$^\ddag$}

\address{Charles Favre \\ CNRS et Institut de Math{\'e}matiques de Jussieu\\
        {\'E}quipe G{\'e}om{\'e}trie et Dynamique\\
         Case 7012, 2  place Jussieu\\
         F-75251 Paris Cedex 05\\
         France  \\ and
         Unidade Mista CNRS-IMPA\\
         Dona Castorina 110\\
          Rio de Janeiro\\
          22460-320\\
         Brazil.}
\email{favre@math.jussieu.fr}

\address{Juan Rivera-Letelier \\ Facultad de Matem{\'a}ticas \\ Campus San Joaqu{\'\i}n \\ P.~Universidad Cat{\'o}lica de Chile \\ Avenida Vicu{\~n}a Mackenna~4860 \\ Santiago \\ Chile} 
\email{riveraletelier@mat.puc.cl}

\subjclass[2000]{Primary: 37F10, Secondary: 11S85, 37E25}
\thanks{Les deux auteurs ont \'et\'e financ\'e partiellement par le projet ECOS-Sud No. C07E01, ainsi que par le projet ANR-Berko.}
\thanks{\dag~Remercie chaleureusement le project FONDECYT N 7050221 de la CONICYT, Chile, qui a permis son s{\'e}jour {\`a} l'Universidad Cat{\'o}lica del Norte.}
\thanks{\ddag~Partiellement soutenu par les projets FONDECYT N~1040683 et Research Network on Low Dimensional Dynamics, PBCT ACT-17, de la CONICYT, Chile, et par le project MeceSup UCN~0202.}

%
%

\begin{abstract}
  On donne les premiers {\'e}l{\'e}ments pour l'{\'e}tude des propri{\'e}t{\'e}s
  ergodi\-ques d'une fraction rationnelle {\`a} coefficients dans un corps
  alg{\'e}briquement clos et complet pour une norme non archim{\'e}\-dienne.
  En particulier, pour une telle fraction rationnelle~$R$ on montre
  l'existence d'une mesure naturelle~$\rho_R$ repr{\'e}sentant la
  distribution asymptotique des pr{\'e}images it{\'e}r{\'e}es de chaque point non
  exceptionnel de~$R$.  On montre que cette mesure est
  (exponentiellement) m{\'e}langeante, et qu'elle satisfait au th{\'e}or{\`e}me
  limite central.  De plus, on donne une estimation de l'entropie
  m{\'e}trique de cette mesure, et de l'entropie topologique de~$R$, qui
  permettent de caract{\'e}riser les fractions rationnelles d'entropie
  topologique nulle.

\medskip

\noindent {\sc Abstract.} We make the first steps towards an understanding of the ergodic properties of a rational map defined over 
a complete algebraically closed non-archimedean field.
For such a rational map $R$, we construct a natural invariant probability measure $\rho_R$
which reprensents the asymptotic distribution of preimages of non-exceptional point.
We show that this measure  is exponentially mixing, and satisfies the central limit theorem.
We prove some general bounds on the metric entropy of $\rho_R$, and on the topological entropy of $R$.
We finally prove that rational maps with vanishing topological entropy have potential good reduction.

\end{abstract}

\maketitle
\tableofcontents

\section{Introduction}
Cet article est d{\'e}di{\'e} {\`a} l'{\'e}tude des propri{\'e}t{\'e}s ergodiques d'une
fraction rationnelle {\`a} coefficients dans un corps~$K$ alg{\'e}briquement
clos et complet par rapport {\`a} une norme ultram{\'e}trique~$| \cdot |$.
Plus pr{\'e}cisement, on {\'e}tudie l'action d'une telle fraction rationnelle
sur l'espace analytique de Berkovich~$\PKber$, associ{\'e} {\`a} la droite
projective~$\PK$.  Cet article pr{\'e}cise et compl{\`e}te la note~\cite{FR}.

La dynamique des fractions rationnelles {\`a} coefficients dans un corps
ultram{\'e}tri\-que appara{\^\i}t naturellement dans plusieurs contextes.  Un
travail r{\'e}cent de Kiwi~\cite{kiwi} relie l'espace des modules des
polyn{\^o}mes cubiques complexes {\`a} la dynamique de certaines fractions
rationnelles sur le corps ultram{\'e}trique des s{\'e}ries de Puiseux.  Une
deuxi{\`e}me source d'exemples provient de questions arithm{\'e}tiques.  {\`A} une
fraction rationnelle~$R$ {\`a} coefficients alg{\'e}briques sur~$\Q$ est
associ{\'e}e une fonction r{\'e}elle sur une cl{\^o}ture
alg{\'e}brique~$\overline{\Q}$ de~$\Q$, appel{\'e}e \textit{hauteur
  dynamique}.  On montre que cette hauteur est compl{\`e}tement
d{\'e}ter\-min{\'e}e par la dynamique de~$R$ en chacune des com\-pl{\'e}\-tions
de~$\overline{\Q}$, y compris les com\-pl{\'e}tions $p$-adiques de ce
corps, pour chaque nombre premier~$p$.  On est donc naturellement
amen{\'e} {\`a} regarder l'action de~$R$ sur ces corps non archim{\'e}diens.  Nous
renvoyons {\`a}~\cite{Be3,BH,BR,ACL,ChaThu,FRL,ST} pour une analyse de ces
hauteurs s'appuyant sur la dynamique des fractions rationnelles {\`a}
coefficients dans un corps ultram{\'e}trique.  On pourra consulter le
livre r{\'e}cent de Silverman~\cite{Sil07} pour d'autres exemples et
r{\'e}f{\'e}rences.

Dans cet article, on s'int{\'e}resse principalement aux propri{\'e}t{\'e}s
ergodiques d'une fraction rationnelle agissant sur l'espace analytique
de Berkovich~$\PKber$ associ\'e {\`a}~$\PK$, voir \cite{Ber}, ou~\cite{BR2,
  Esc03, R3.5} pour une approche {\'e}l{\'e}mentaire.  Dans le cas complexe
l'entropie topologique est {\'e}gale au logarithme du degr{\'e} de la fraction
rationnelle, et il existe une unique mesure d'entropie
maximale~\cite{Gro03, Lyu, Man83}.  De plus cette mesure poss{\`e}de des
propri{\'e}t{\'e}s remarquables d'{\'e}quidistribution: elle d{\'e}crit la
distribution asymptotique des pr{\'e}images it{\'e}r{\'e}es d'un point non
exceptionnel, ainsi que la distribution asymptotique des points
p{\'e}riodiques~\cite{Brolin, Tortrat, FLM, Lyu}.  Le d{\'e}veloppement r{\'e}cent
d'une th{\'e}orie du potentiel dans un cadre
non archim{\'e}dien~\cite{BR2,valtree,thuillier}, analogue {\`a} la th{\'e}orie du
potentiel complexe, a permis la construction d'une mesure analogue
lorsque le corps de base est ultram{\'e}trique.  Notre but est ici de
d{\'e}crire les propri{\'e}t{\'e}s g{\'e}n{\'e}rales de cette mesure, et d'en tirer
quelques conclusions sur l'entropie topologique des fractions
rationnelles en question.

\subsection{Dynamique sur la droite projective de Berkovich}
\label{ss:berkovich}

Dans toute la suite, $(K,|\cdot|)$ d{\'e}signe un corps m{\'e}tris{\'e}
non archim{\'e}dien, que l'on suppose \emph{complet et alg{\'e}briquement
  clos}.  Pour la plupart des r{\'e}sultats, nous ne ferons aucune
hypoth{\`e}se sur la caract{\'e}ristique de~$K$, ni sur la caract{\'e}ristique
r{\'e}siduelle de~$K$.

L'espace projectif standard~$\PK$ muni de la m{\'e}trique sph{\'e}rique est
tout {\`a} la fois totalement discontinu et non localement compact.  Ces
deux faits cumul{\'e}s rendent d{\'e}licate toute th{\'e}orie de la mesure, et plus
g{\'e}n{\'e}ralement toute analyse sur~$\PK$.  C'est pour cette raison qu'il
est plus naturel de consid{\'e}rer l'action des fractions rationnelles sur la
\textit{droite projective de Berkovich}~$\PKber$.  Cet espace peut
{\^e}tre construit de plusieurs mani{\`e}res dif\-f{\'e}rentes.  Il s'identifie par
exemple {\`a} l'arbre r{\'e}el obtenu par compl{\'e}tion de l'espace des boules
de~$K$, par rapport {\`a} une m{\'e}trique ad{\'e}quate.
\begin{figure}[h]
\centering
\input{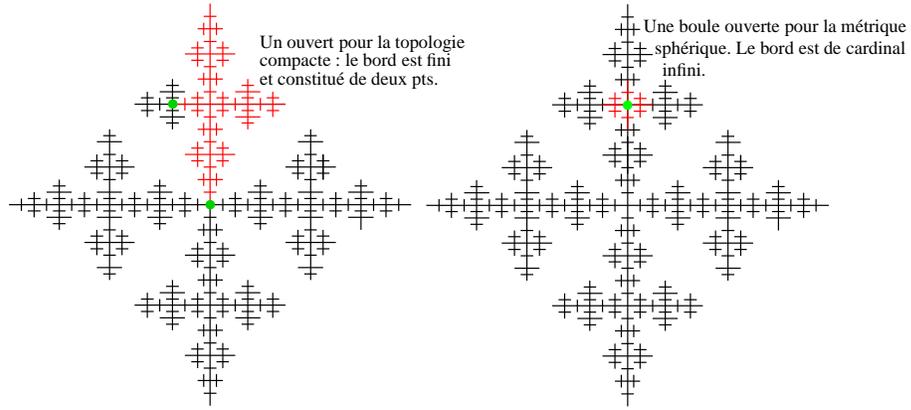}
\caption{Deux topologies sur un arbre}
\label{fig-arbre}
\end{figure}
On a  un plongement naturel de~$\PK$ dans~$\PKber$, associant
{\`a} $z\in \PK$ la boule de centre~$z$ et de rayon nul.  On notera
$\HK$ le compl{\'e}mentaire de~$\PK$ dans~$\PKber$.  L'espace~$\PKber$
poss{\`e}de deux topologies naturelles: une m{\'e}trisable provenant
de la m{\'e}trique sph{\'e}rique, qui n'est pas localement compacte, et
une autre plus grossi{\`e}re et compacte, qui n'est pas m{\'e}trisable
en g{\'e}n{\'e}ral, voir
Figure~\ref{fig-arbre}.  Cette derni{\`e}re topologie est adapt{\'e}e pour
la th{\'e}orie de la mesure.

L'action d'une fraction rationnelle~$R$ sur~$\PK$ s'{\'e}tend de
fa{\c c}on naturelle en une action continue sur~$\PKber$.  En
particulier, {\`a} chaque point~$\cS$ de~$\PKber$ on peut associer de
fa{\c c}on naturelle un \textit{degr{\'e} local}~$\deg_R(\cS)$, qui
co{\"\i}ncide avec le degr{\'e} local usuel lorsque~$\cS \in \PK$.  De
fa{\c c}on analogue au cas complexe, lorsque~$R$ est de degr{\'e} au
moins deux on d{\'e}compose~$\PKber$ en deux ensembles disjoints et
totalement invariants: \textit{l'ensemble de Fatou}~$F_R$, et
\textit{l'ensemble de Julia}~$J_R$.  Le premier est l'ouvert des
points o{\`u} la dynamique est \og r{\'e}guli{\`e}re\fg; le
second est un compact sur lequel la dynamique est
\og chaotique \fg, voir~\cite{R1, R4}.
\subsection{La mesure d'{\'e}quilibre et ses propri{\'e}t{\'e}s d'{\'e}quidistribution}
Pour chaque~$\cS \in \PKber$ on note~$[\cS]$ la masse de Dirac
en~$\cS$.  Chaque fraction rationnelle~$R$ induit une ac\-tion
continue $R^*$ sur les mesures bor{\'e}liennes et positives, telle que pour
chaque~$\cS \in \PKber$, on~ait
$$
R^{*} [\cS] = \sum_{\cS' \in R^{-1}(\cS)} \deg_{R}(\cS') [\cS']~.
$$
Voir~\S\ref{ss:fr} pour plus de pr{\'e}cisions.

Rappelons qu'un point $z \in \PK$ est \textit{exceptionnel} pour $R$
si l'ensemble de ses pr{\'e}images it{\'e}r{\'e}es est fini.  Une fraction
rationnelle admet au plus deux points exceptionnels, sauf dans le cas
o{\`u} la caract{\'e}ristique de $K$ est strictement positive et $R$ est conjugu{\'e}e
{\`a} un it{\'e}r{\'e} de l'automorphisme de Frobenius.  Dans ce dernier
cas l'ensemble exceptionnel est infini d{\'e}nombrable.
\begin{theoalph}\label{t:equi-measures}
Soit~$R$ une fraction rationnelle {\`a} coefficients dans~$K$ et de degr{\'e}
au moins~2.  Alors il existe une mesure de probabilit{\'e}~$\rho_R$
d{\'e}finie sur~$\PKber$, telle que pour toute mesure de
probabilit{\'e}~$\rho$ d{\'e}finie sur~$\PKber$, on ait convergence
\begin{equation}\label{e:convergence generale}
\lim_{n \to + \infty} \deg(R)^{-n} R^{n*}\rho =\rho_R ~,
\end{equation}
si et seulement si $\rho$ ne charge pas l'ensemble exceptionnel
de~$R$.  En particulier la mesure~$\rho_R$ est caract{\'e}ris{\'e}e comme
l'unique mesure de probabilit{\'e}~$\rho$ ne chargeant pas l'ensemble
exceptionnel et telle que ~$R^* \rho = \deg(R) \rho$.

De plus, la mesure~$\rho_R$ est m\'elangeante, ne charge aucun point de~$\PK$, et son
support topologique est {\'e}gal {\`a} l'ensemble de Julia de~$R$.
\end{theoalph}
On appellera $\rho_R$ la \emph{mesure d'{\'e}quilibre} de $R$.  Nous
donnerons une version quantitative du fait que~$\rho_R$ ne charge pas
les points de~$\PK$ en estimant la masse des boules en fonction de
leur diam{\`e}tre, voir la Proposition~\ref{p:holder}.  Par contre,
cette mesure peut charger un point de~$\HK
= \PKber \setminus \PK$, voir le Th{\'e}or{\`e}me~\ref{t:htopzero} ci-dessous.

On montrera que la mesure d'{\'e}quilibre est
exponentiellement m{\'e}\-lan\-geante et satisfait au th{\'e}or{\`e}me limite central
par rapport aux observables de classe~$\cC^1$, voir les
Propositions~\ref{p:melange} et~\ref{p:clt} du~\S\ref{ss:melange et
clt}.  Dans le cas complexe le m{\'e}lange exponentiel a {\'e}t{\'e}
montr{\'e} dans~\cite{ForSib94,ForSib95,H}, et le th{\'e}or{\`e}me limite central
dans~\cite{DPU}.
On utilise ici les m{\'e}thodes de la th{\'e}orie du potentiel d{\'e}velopp{\'e}es dans~\cite{CanLeB05, DinSib06, ForSib94, ForSib95}.

Lorsque la mesure~$\rho=[\cS]$ est la masse de Dirac situ{\'e}e en un point
non exceptionnel~$\cS$ dans~$\PKber$, pour tout entier positif~$n$
on~a,
$$
R^{n*} [\cS] = \sum_{\cS' \in R^{-n}(\cS)} \deg_{R^n}(\cS') [\cS']~.
$$ Le Th{\'e}or{\`e}me~\ref{t:equi-measures} montre alors que la mesure
$\rho_R$ d{\'e}crit la distribution asymptotique des pr{\'e}images
it{\'e}r{\'e}es de tout point non exceptionnel.  Dans le cas
arithm{\'e}tique cette propri{\'e}t{\'e} est une cons{\'e}quence de
l'{\'e}quidistribution des points de petite hauteur montr{\'e}e
dans~\cite{BR, ACL, FR3}, et on~a de plus des estimations quantitatives de la convergence~\cite{FR3}.  La propri{\'e}t{\'e} analogue dans le cas
complexe a {\'e}t{\'e} montr{\'e}e dans~\cite{Brolin,FLM,Lyu}.

Dans le cas complexe la mesure d'{\'e}quilibre d{\'e}crit la distribution
asymptotique de suites plus g{\'e}n{\'e}rales de points, voir~\cite{Lyu} et aussi~\cite{Tortrat}.
Lorsque le corps de base~$K$ est de caract{\'e}ristique nulle, nous
obtenons l'analogue non archim{\'e}dien.
\begin{theoalph}\label{t:equi-periodiques}
Supposons que la carat{\'e}ristique du corps~$K$ soit nulle, et soit~$R$
une fraction rationnelle {\`a} coefficients dans~$K$ de degr{\'e} au
moins~2.  Pour toute fraction rationnelle non constante $S\in K(z)$ et
pour tout entier $n\ge0$ tel que $R^n \neq S$, notons $[R^n=S]$ la
mesure support{\'e}e sur l'ensemble fini $\{ z \in \PK, \, R^n(z) =
S(z) \}$, et dont la masse en~$z$ est {\'e}gal {\`a} la multiplicit{\'e}
de~$z$ comme solution de $R^n = S$.  Alors, on~a
\begin{equation}\label{e:cvg}
\lim_{n \to + \infty} \deg(R)^{-n} [R^n=S] =\rho_R~.
\end{equation}
\end{theoalph}
Lorsque~$S(z) = z$ la mesure~$[R^n = S]$ est support{\'e}e sur les
points p{\'e}riodiques de p{\'e}riode~$n$ dans~$\PK$, chargeant chaque
point selon sa multiplicit{\'e} comme point p{\'e}riodique.  Le
Th{\'e}or{\`e}me~\ref{t:equi-periodiques} montre alors que la
mesure~$\rho_R$ d{\'e}crit la distribution asymptotique des points
p{\'e}riodiques dans~$\PK$.  Lorsque le corps de base est de
caract{\'e}ristique~$p > 0$, cette derni{\`e}re propri{\'e}t{\'e} n'est pas
v{\'e}rifi{\'e}e en g{\'e}n{\'e}ral.  Par exemple, si l'on pose~$P_0(z) = z +
z^p$, alors pour tout entier positif~$n$ on~a $P_0^{p^n}(z) = z +
z^{p^{p^n}}$, d'o{\`u} $[P_0^n = S] = p^{p^n} [0] + [\infty]$ et
$p^{-p^n} [P_0^{p^n} = S] \to [0]$ lorsque $n \to + \infty$.  Or  la
mesure~$\rho_{P_0}$ ne charge pas~$0$ par le Th{\'e}or{\`e}me~\ref{t:equi-measures}.
  Il serait int{\'e}ressant de
d{\'e}crire toutes les fractions rationnelles~$R$ et~$S$ pour lesquelles
$\deg(R)^{-n} [R^n = S]$ ne converge pas vers la mesure d'{\'e}quilibre
lorsque $n \to + \infty$.

Notons que pour $S(z)=z$ seuls les points p{\'e}riodiques
dans~$\PK$ interviennent dans~\eqref{e:cvg}.  En r{\'e}alit{\'e}, l'\'equation~\eqref{e:cvg} ne
peut {\^e}tre valide si l'on prend en compte tous les points
p{\'e}riodiques de~$R$ dans~$\PKber$, car en g{\'e}n{\'e}ral $R$ en
admet une infinit{\'e} non d{\'e}nombrable (c'est le cas si~$K$ est de caract\'eristique mixte et~$R$ admet
un point p{\'e}riodique indiff{\'e}rent, voir~\cite[Proposition~5.8]{R2}).  On
peut cependant formuler la question suivante.
\begin{qst}
  Les points p{\'e}riodiques r{\'e}pulsifs sont-ils \' equidistribu{\'e}s
  asymptotiquement par rapport {\`a} la mesure d'{\'e}quilibre?
\end{qst}
Enfin dans le cas o{\`u} $R$ est d{\'e}finie sur un \emph{corps de nombres},
Szpiro-Tucker~\cite{ST} ont d{\'e}montr{\'e} une version plus forte
de~\eqref{e:cvg} qui permet entre autres d'estimer l'exposant de Lyapunov de~$R$ par rapport {\`a} la mesure~$\rho_R$~:
$$
\deg(R)^{-n} \!\!\!\!\!\sum_{z \in \PK, \ R^n(z) = z}\!\!\!\!\!\! \log|R'|(z) \to \int \log|R'|\, d\rho_R~. 
$$
\subsection{Entropie}\label{ss:entropie}
Pour {\'e}tudier l'entropie topologique nous introduisons le nombre
$$
\degtop(R) \= \max \{ \# (R^{-1}(z)), \, z \in \PK \} ~,
$$ qu'on appellera \textit{le degr{\'e} topologique de~$R$}.  Lorsque
la caract{\'e}ristique de~$K$ est nulle, cet entier est {\'e}gal
{\`a}~$\deg(R)$.  Lorsque la caract{\'e}ristique de~$K$ est $p > 1$,
alors on peut {\'e}crire~$R$ de fa{\c c}on unique comme $R(z) = Q(z^{q})$,
o{\`u}~$Q$ est une fraction rationnelle s{\'e}parable et $q \ge 1$ est
une puissance de~$p$.  Dans ce cas on~a $\degtop(R) = \deg(Q)$.  Dans
tous les cas $\deg(R) / \degtop(R)$ est un entier, {\'e}gal au degr{\'e}
local de~$R$ en chaque point de~$\PK$, avec un nombre fini
d'exceptions.

Dans le cas complexe, l'entropie topologique d'une fraction rationnelle
est {\'e}gale au logarithme de son degr{\'e}, et il existe une unique
mesure d'entropie maximale~\cite{Gro03, Lyu, Man83}.  La situation
dans le cas non archim{\'e}dien est nettement plus compliqu{\'e}e.
Par exemple, pour chaque entier~$d \ge 5$ et chaque $a \in K$
satisfaisant $|a| \in (0, 1)$, la fraction rationnelle
$$
R_0(z) = \frac{z^{d - 2}}{1 + (az)^d}~,
$$
satisfait,
\begin{equation}\label{e:contreexample}
0 < h_{\rho_{R_0}}(R_0) < \htop(R_0) = \log 2 < \log d = \log \degtop(R_0) ~,
\end{equation}
o\`u $h_{\rho_{R_0}}(R_0)$ est l'entropie m\'etrique de la mesure $\rho_{R_0}$, et
$\htop(R_0)$ est l'entropie topologique de $R_0$.
Et ceci reste m{\^e}me valable dans un voisinage ouvert de~$R_0$ dans
l'espace des fractions rationnelles de degr{\'e}~$d$,
voir~\S\ref{ss:contreexample}.
\begin{theoalph}\label{t:estimation htop}
Soit $R$ une fraction rationnelle {\`a} coefficients dans~$K$ et de
degr{\'e} au moins deux.  Alors on~a
\begin{equation}\label{e:000}
\htop(R) = \htop(R|_{J_R}), \text{ et } 0 \le h_{\rho_R}(R) \le
\htop(R) \le \log \degtop(R)~.
\end{equation}
\end{theoalph}
Nous donnerons aussi une estimation de l'entropie m{\'e}trique de la
mesure d'{\'e}quilibre en termes de la fonction degr{\'e} local.
Pour cela on d{\'e}finit le \emph{degr\'e moyen} de~$R$ par
$$
\mdeg (R) \= \exp \left[ \int_{\PKber} \log \deg_R(\cS)\,
    d\rho_R(\cS)\right]~.
$$
\begin{theoalph}\label{t:estimation h metrique}
On~a
\begin{equation}\label{e:hmetr}
h_{\rho_R}(R) \ge \log \left( \frac{\deg(R)}{\mdeg(R)} \right) ~.
\end{equation}
Si de plus $\rho_R$ ne charge pas $\HK$, alors on~a
$$
h_{\rho_R}(R) = \htop(R)
=
\log \degtop(R) > 0 ~.
$$
\end{theoalph}
On obtient l'estimation~\eqref{e:hmetr} comme une cons{\'e}quence de
la formule de Rokhlin, et par cons{\'e}quent l'{\'e}galit{\'e} est
{\'e}quivalente {\`a} l'existence d'une partition g{\'e}n{\'e}ratrice
d'entropie finie associ{\'e}e {\`a}~$R$, voir~\cite{parry,PU}.
Dans la Section~\ref{ss:exemple sauvage} nous d{\'e}crirons un exemple o{\`u}
$R$ n'est pas localement injective sur un sous-ensemble non
d{\'e}nombrable de son ensemble de Julia, ce qui rend extr{\`e}mement
d{\'e}licat la construction d'une telle partition.  Malgr{\'e}
cela, nous posons la question suivante.
\begin{qst}
  Pour toute fraction rationnelle, a-t'on {\'e}galit{\'e}
  dans~\eqref{e:hmetr}?
\end{qst}
On verra sur des exemples que les difficult{\'e}s dans les calculs
d'entropie sont toutes li{\'e}es au fait que la fraction rationnelle
n'est pas s{\'e}parable ou poss{\`e}de une r{\'e}duction
non s{\'e}parable.  Lorsque ces ph{\'e}no\-m{\`e}nes sont absents (par
exemple lorsque la caract{\'e}ristique r{\'e}siduelle est nulle), il
est possible de faire une analyse plus fine, de pr{\'e}ciser le
Th{\'e}or{\`e}\-me~\ref{t:estimation htop}, et de r{\'e}pondre par
l'affirmative {\`a} la question pr{\'e}c{\'e}dente. Cette {\'e}tude
sera d{\'e}taill{\'e}e dans~\cite{FR3}.

L'estimation~\eqref{e:hmetr} permet aussi de caract{\'e}riser les
fractions rationnelles d'entropie topologique nulle.  Rappelons qu'une
fraction rationnelle $R \= P/Q$ avec $P,Q \in K[z]$ \textit{a bonne r{\'e}duction} si les r{\'e}ductions $\widetilde{P}, \widetilde{Q}$
de~$P$ et~$Q$ dans le corps r{\'e}siduel de~$K$ sont non nulles, et si la fraction $\widetilde{P}/\widetilde{Q}$ a m{\^e}me degr{\'e}
que~$R$.
\begin{theoalph}\label{t:htopzero}
  Les assertions suivantes sont {\'e}quivalentes:
\begin{enumerate}
\item
$\htop(R) =0$~;
\item
$h_{\rho_R}(R) =0$~;
\item
il existe une coordonn{\'e}e dans laquelle~$R$ a bonne r{\'e}duction~;
\item la mesure~$\rho_R$ est la masse de Dirac situ{\'e}e en un point de~$\HKrat$~;
\item la mesure~$\rho_R$ charge un point de~$\PKber$~.
\end{enumerate}
\end{theoalph}
L'ensemble $\HKrat$ est d{\'e}fini au~\S\ref{s:generalites} et est
constitu{\'e} des points de branchements de l'arbre r{\'e}el $\PKber$.

Mentionnons tout de suite le r\'esultat suivant que nous g\'en\'eraliserons dans~\cite{FR3}.
Une fraction rationnelle est dite \emph{mod\'er\'ee} si le sous-ensemble 
$\{ \deg_R \ge 2\}$ de $\PKber$ est inclus dans un arbre fini. Par exemple,
lorsque la caract\'eristique r\'esiduelle de $K$ est nulle, toute fraction
rationnelle est mod\'er\'ee.
Les Th\'eor\`emes~\ref{t:estimation h metrique} et~\ref{t:htopzero}, et le fait que pour un polyn{\^o}me mod{\'e}r{\'e} l'intersection de l'ensemble de Julia avec $\{ \deg_R \ge 2 \}$ est finie, impliquent le r\'esultat suivant.
\begin{coralph}\label{c:modere}
Soit $P$ un \emph{polyn\^ome} mod\'er\'e qui ne
soit pas conjugu\'e \`a un polyn\^ome ayant bonne r\'eduction.
Alors $h_{\rho_R}(R) = \htop(R) = \log \degtop(R)$.
\end{coralph}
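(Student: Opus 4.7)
Le plan est d'\'etablir d'abord que $\rho_P$ ne charge aucun point de $\PKber$, puis d'appliquer la seconde partie du Th\'eor\`eme~\ref{t:estimation h metrique}.

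Pour la premi\`ere \'etape, je raisonnerai par l'absurde en supposant que $\rho_P$ charge un point. L'\'equivalence entre les assertions (4) et (5) du Th\'eor\`eme~\ref{t:htopzero} donnera alors $\rho_P = [\cS]$ pour un certain $\cS \in \HKrat$, et l'invariance $P^* \rho_P = \deg(P)\,\rho_P$ entra\^inera $P^{-1}(\cS) = \{\cS\}$ avec $\deg_P(\cS) = \deg(P)$. Comme $P$ est un polyn\^ome, l'infini sera \'egalement totalement invariant avec le m\^eme degr\'e local. Le point $\cS$ \'etant dans $\HKrat$ correspond \`a une boule rationnelle $B(a,|c|)$ avec $a \in K$ et $c \in K^*$; l'application affine $\phi(z) \= (z-a)/c$ enverra $\cS$ sur le point canonique $\cScan$. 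Le conjugu\'e $\widetilde{P} \= \phi \circ P \circ \phi^{-1}$ sera alors un polyn\^ome de m\^eme degr\'e que $P$, v\'erifiant $\widetilde{P}^{-1}(\cScan) = \{\cScan\}$ avec degr\'e local maximal. Une analyse standard des coefficients ($|a_i| \leq 1$ pour tout $i$ et $|a_d| = 1$ si $\widetilde{P} = \sum a_i z^i$) montrera que $\widetilde{P}$ a bonne r\'eduction, contredisant l'hypoth\`ese.

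Pour la seconde \'etape, une fois \'etabli que $\rho_P$ ne charge aucun point, je m'appuierai sur la mod\'eration de $P$: l'ensemble $E \= J_P \cap \{\deg_P \geq 2\}$ est fini. La mesure $\rho_P$, port\'ee par $J_P$ (Th\'eor\`eme~\ref{t:equi-measures}) et ne chargeant aucun point, se concentrera sur $J_P \setminus (\PK \cup E)$ o\`u $\deg_P \equiv 1$; d'o\`u $\mdeg(P) = 1$ et, par le Th\'eor\`eme~\ref{t:estimation h metrique}, $h_{\rho_P}(P) \geq \log \deg(P)$. La mod\'eration forcera aussi $P$ \`a \^etre s\'eparable: sinon, le degr\'e local sur $\PK$ serait g\'en\'eriquement \'egal \`a $\deg(P)/\degtop(P) \geq 2$, rendant $\{\deg_P \geq 2\}$ incompatible avec une inclusion dans un arbre fini. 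On obtiendra $\deg(P) = \degtop(P)$, et la combinaison avec le Th\'eor\`eme~\ref{t:estimation htop} donnera les \'egalit\'es cherch\'ees.

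Le principal obstacle sera la derni\`ere \'etape de la premi\`ere partie: \'etablir proprement que la pr\'esence d'un point totalement invariant de type rationnel avec degr\'e local maximal implique la bonne r\'eduction de $P$ apr\`es conjugaison \emph{affine} (et non seulement la bonne r\'eduction de $P$ en tant que fraction rationnelle apr\`es conjugaison de M\"obius quelconque). Le reste de l'argument est essentiellement un assemblage des Th\'eor\`emes~\ref{t:equi-measures}, \ref{t:estimation htop}, \ref{t:estimation h metrique} et~\ref{t:htopzero}, exploitant la structure polynomiale (l'infini totalement invariant) et l'hypoth\`ese de mod\'eration.
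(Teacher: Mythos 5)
Your plan is correct and follows exactly the ingredients the paper lists for this corollary (which it states without proof): Th\'eor\`emes~\ref{t:estimation h metrique} et~\ref{t:htopzero}, plus the finiteness of $J_P \cap \{\deg_P \ge 2\}$ for a moderate polynomial. Two remarks, neither a true gap. First, your opening announcement of \og la seconde partie du Th\'eor\`eme~\ref{t:estimation h metrique}\fg{} mischaracterizes what you actually do: that part requires $\rho_P(\HK)=0$, which you neither prove nor use; your argument correctly relies on the \emph{first} part, namely $h_{\rho_P}(P)\ge\log(\deg(P)/\mdeg(P))$, combined with $\mdeg(P)=1$, $\deg(P)=\degtop(P)$ (from separability) and Th\'eor\`eme~\ref{t:estimation htop}. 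Second, the \og obstacle\fg{} you flag at the end is not one: since $\cS\in\HKrat$ corresponds to a closed ball $\{|z-a|\le|c|\}$ with $a\in K$, $c\in K^*$, the affine map $z\mapsto(z-a)/c$ conjugates $P$ to a polynomial $\widetilde P$ for which $\cScan$ is totally invariant, and Proposition~\ref{prop:bonne} then asserts directly that $\widetilde P$ has good reduction, with no coefficient analysis needed. Thus for a polynomial the implication (5)$\Rightarrow$(3) of Th\'eor\`eme~\ref{t:htopzero}, which a priori only produces a rational conjugate with good reduction, does yield a polynomial conjugate, exactly as required to contradict the corollary's hypothesis.
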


\subsection{Plan}
Nous commen{\c c}ons en \S\ref{ss:droite} par rappeler la d{\'e}finition de l'espace
projectif au sens de Berkovich et nous y d{\'e}crivons rapidement sa
g{\'e}om{\'e}trie.  Au~\S\ref{ss:fr}, nous
donnons les propri{\'e}t{\'e}s principales de l'action d'une fraction rationnelle sur
cet espace, et au~\S\ref{ss:fatou-julia} nous rappelons les
d{\'e}finitions des ensembles de Fatou et Julia, ainsi que certaines
propri{\'e}t{\'e}s qui nous seront utiles par la suite.  Nous indiquons
alors comment construire une th{\'e}orie du potentiel sur $\PKber$
adapt{\'e}e {\`a} nos besoins (\S\ref{ss:potentiel}).

La Section~\ref{s:equilibre} contient la construction de la mesure
d'{\'e}quilibre (\S\ref{ss:construction}), ainsi que la preuve du
Th{\'e}or{\`e}me~\ref{t:equi-measures}~\S\ref{ss:equi-measures} et du
Th{\'e}or{\`e}me~\ref{t:equi-periodiques}~\S\ref{ss:equi-periodiques}.
Nous montrons aussi les propri{\'e}t{\'e}s de m{\'e}lange de la mesure
d'{\'e}quilibre (\S\ref{ss:melange et clt}).

Nous attaquons les probl{\`e}mes d'entropie au~\S\ref{s:entropie}.
On commence par quelques g{\'e}n{\'e}rali\-t{\'e}s sur l'entropie
topologique dans les espaces compacts non m{\'e}trisables
\S\ref{ss:generalites entropie}, puis nous donnons la preuve du
Th{\'e}or{\`e}me~\ref{t:estimation htop}~\S\ref{ss:estimation htop},
et les preuves des Th{\'e}or{\`e}mes~\ref{t:estimation h metrique}
et~\ref{t:htopzero} et du Corollaire~\ref{c:modere}~\S\ref{ss:estimation h metrique}.

Nous concluons cet article en explicitant quatre exemples qui nous
semblent caract{\'e}ristiques en \S\ref{sec:exemple}.
\subsection{Remerciements:}
nous tenons \`a remercier les deux rapporteurs pour leur lecture extr\`emement d\'etaill\'ee du papier
et leurs suggestions pour en am\'eliorer la r\'edaction.

\section{G{\'e}n{\'e}ralit{\'e}s}\label{s:generalites}

Cette partie contient un certain nombre de r{\'e}sultats et de faits
sur la g{\'e}om{\'e}trie de la droite projective sur un corps
norm{\'e} non archim{\'e}dien, ainsi que sur les propri{\'e}t{\'e}s de
base des fractions rationnelles.

Dans le reste de cet article on fixe un corps alg{\'e}briquement
clos~$K$, muni d'une norme non archi\-m{\'e}dienne~$| \cdot |$ pour
laquelle il est complet.  On note $\OK \= \{ z \in K, |z| \le 1 \}$
l'anneau des entiers de~$K$, et $\MK \= \{ z \in K, |z|<1\} $ son
unique id\'eal maximal.  Le corps r{\'e}siduel sera not{\'e} $\tK \=
\cO_K / \MK$, c'est un corps alg{\'e}briquement clos.

Rappelons qu'une semi-norme multiplicative sur un anneau commutatif $R$ muni d'une unit\'e
est une fonction
$|\cdot|: R \to \R_+$ telle que $|1| =1$, $|ab| = |a| \cdot |b|$, et $|a+b| \le\max \{ |a|, |b| \}$
pour tout $a,b\in R$. Si $\{ a, |a| =0\}$ est r\'eduit \`a $\{ 0 \}$ alors $|\cdot|$ est une norme
non-archim\'edienne sur $R$.


\subsection{La droite projective au sens de Berkovich}\label{ss:droite}
Nous renvoyons \`a~\cite{BR2,Ber} pour plus d'informations.

Soit $\AK$ l'espace de toutes les semi-normes multiplicatives
d{\'e}finies sur $K[z]$, dont la restriction {\`a}~$K$ est {\'e}gale
{\`a} $|\cdot|$.  On note de plus~$\cS_\infty$ la fonction d{\'e}finie
sur $K[z]$, qui est constante {\'e}gale {\`a}~$\infty$ sur tous les
polyn{\^o}mes non constants de~$K$, et telle que pour chaque
polyn{\^o}me constant $P \equiv a$ on ait $\cS_\infty(P) = |a|$.  On
pose $\PKber = \AK \sqcup \{ \cS_\infty \}$ et on munit~$\PKber$ de la
topologie la moins fine telle que pour chaque $P
\in K[z]$ la fonction $\cS \mapsto \cS(P)$ soit continue.  L'espace
$\PKber$ est alors compact et s{\'e}quentiellement compact.
On l'appelle \emph{espace analytique de Berkovich}
associ{\'e} {\`a}~$\PK$.

Chaque point $z \in K$ induit une semi-norme, qu'on notera aussi par~$z$, d{\'e}finie par $z(P) = |P(z)|$. On obtient ainsi un
hom{\'e}omorphisme de $\PK = K \cup \{ \infty \}$ sur son image.
Dans la suite, on identifiera $\PK$ avec son image dans $\PKber$.

{\`A} chaque boule $B = \{ |z - z_0| \le r \}$ de~$K$ correspond la
semi-norme~$\cS_B$ dans~$\AK$, d{\'e}finie par $\cS_B(P) = \sup_B
|P(z)|$.  Plus g{\'e}n{\'e}ralement, toute suite d{\'e}croissante $\{
B_i \}_{i \ge 0}$ de boules de $K$ induit une semi-norme $P \mapsto
\lim_{i \to \infty} \cS_{B_i}(P)$. R{\'e}cipro\-que\-ment, toute
semi-norme dans $\AK$ est de cette forme et les points de $\PKber$ se
rangent donc dans l'une des quatre cat{\'e}gories suivantes (voir par
exemple~\cite[p.18]{Ber}):
\begin{enumerate}
\item[{\rm i)}]
{\it les points de $\PK$};
\item[{\rm ii)}] {\it les points rationnels}, de la forme $\cS_B$,
avec $B = \{ |z - a| \le r \}$ et $r \in |K^*|$;
\item[{\rm iii)}] {\it les points irrationnels}, de la forme $\cS_B$,
avec $B = \{ |z - a| \le r \}$ et $r \not \in |K^*|$;
\item[{\rm iv)}] {\it les points singuliers}, associ{\'e}s {\`a} une suite
d{\'e}croissante de boules de $K$ dont l'intersection est vide.
\end{enumerate}
Notons que tous les points de type (ii),~(iii) et~(iv) sont des normes
qui s'{\'e}tendent {\`a}~$K(z)$, alors que la semi-norme $|\cdot|_{z_0}$ associ{\'e}e
{\`a} un point $z_0\in\PK$ v{\'e}rifie $|z-z_0|_{z_0} =0$.  On notera
par~$\HK$ l'ensemble~$\PKber \setminus \PK$, par~$\HKrat$ le
sous-ensemble de~$\HK$ des points rationnels de~$\PKber$, et par $\HKo$ le compl\'ementaire 
dans $\HK$ de l'ensemble de ses points singuliers.
 On appelle
\emph{point canonique}\footnote{aussi appel\'e point de Gauss, voir~\cite{BR2}} 
la norme associ{\'e}e {\`a} la boule unit{\'e}
$\{ z \in K, |z| \le 1 \}$ et on le note~$\cScan$.  Etant donn{\'e} un
point rationnel ou irrationnel $\cS$, on d{\'e}signe par $B_\cS$ la
boule de $K$ correspondante. Elle co\"{\i}ncide avec l'intersection avec $K$
du compl\'ementaire dans~$\AK$ de la composante connexe non born{\'e}e de $\AK \setminus \cS$.
Lorsque $z \in K$ on pose $B_z = \{ z
\}$, et  si $z = \infty$, on note $B_\infty = K$.

Chaque fraction rationnelle $R \in K(z)$ agit sur $\PKber\setminus
\PK$. Un point $\cS$ dans cet espace est en effet une norme sur
$K[z]$, et donc induit une norme sur le corps des fractions $K(z)$.
On d{\'e}finit alors $R(\cS)$ comme la norme v{\'e}rifiant $R (\cS)(P)
\= \cS ( P \circ R)$.  Cette action s'{\'e}tend contin{\^u}ment en une
action de $R$ sur $\PKber$ qui co{\"\i}ncide avec l'action naturelle
de $R$ sur $\PK$.

\subsubsection*{Structure d'arbre}
C'est un fait fondamental que $\PKber$ poss{\`e}de une structure
d'arbre r\'eel, que nous allons maintenant d{\'e}crire
bri{\`e}vement. Consid{\'e}rons l'ordre partiel $\le$ d{\'e}fini sur
l'espace $\PKber$ par $\cS \le \cS'$ si et seulement si pour tout $P
\in K[z]$ on~a $\cS(P) \le \cS'(P)$.  Lorsque $\cS$ et $\cS'$ sont non
singuliers, on~a $\cS \le \cS'$ si et seulement si $B_\cS \subset
B_{\cS'}$. On v{\'e}rifie que le point $\cS_\infty$ est l'unique
{\'e}l{\'e}ment maximal de $\PKber$ et que l'ensemble des
{\'e}l{\'e}ments minimaux co{\"\i}ncide avec l'union de $K$ et des
points singuliers.

{\'E}tant donn{\'e}s $\cS$ et $\cS'$ dans $\PKber$,  on d{\'e}finit $\cS
\vee \cS' \in \PKber$ par
$$
(\cS \vee \cS')(P) = \inf \{ \widehat{\cS}(P) , \,  \widehat{\cS}
\in \PKber, \ \cS \le \widehat{\cS}, \ \cS' \le \widehat{\cS} \}.
$$
On v{\'e}rifie qu'on a $\cS \vee \cS' = \cS$ si et seulement si $\cS'
\le \cS$ et que $\cS \vee \cS' = \cS_\infty$ si et seulement si
$\cS$ ou $\cS'$ est {\'e}gale {\`a} $\cS_\infty$.  Lorsque $\cS$ et $\cS'$
sont des points non singuliers dans $\AK$, le point $\cS \vee
\cS'$ est la semi-norme associ{\'e}e {\`a} la plus petite boule de $K$
qui contient $B_\cS$ et $B_{\cS'}$.

L'ordre partiel $\le$ d{\'e}finit alors une structure d'arbre dans
$\AK$ (resp. $\PKber$) au sens suivant. Pour chaque paire de points
distincts $\cS$ et $\cS'$, l'ensemble
$$
[\cS, \cS']
=
\{ \widetilde{\cS} , \,  \cS \le \widetilde{\cS} 
\le 
\cS \vee \cS' \ \mbox{ ou } \ \cS' \le \widetilde{\cS} 
\le \cS \vee \cS' \}.
$$ est l'unique arc topologique dans $\AK$ (resp. $\PKber$) ayant
$\cS$ et $\cS'$ comme extr{\'e}mi\-t{\'e}s. Un ensemble de la forme $ [\cS,
\cS']$ est appel{\'e} \emph{segment}.  On dira qu'un point $\cS$ est
{\it entre} les points $\cS'$ et $\cS''$ lorsque $\cS \in [\cS',
\cS'']$.  Dans ce cas on~a $[\cS', \cS''] = [\cS', \cS] \cup [\cS,
\cS'']$.  Notons que pour chaque triplet de points $\cS$, $\cS'$ et
$\cS''$ il existe un unique point qui est entre $\cS$ et $\cS'$, entre
$\cS'$ et $\cS''$ et entre $\cS''$ et $\cS$.

\subsubsection*{M{\'e}trique sph{\'e}rique}
Diverses fonctions d{\'e}finies sur $K$ s'{\'e}tendent de ma\-ni{\`e}\-re
naturelle {\`a} $\AK$. Ceci permet de d{\'e}finir une m{\'e}trique sur $\PKber$
{\'e}tendant la m{\'e}trique sph{\'e}rique de~$\PK$.

Commen{\c c}ons par d{\'e}finir les fonctions $| \cdot |$ et $\diam :
\AK \to [0, + \infty)$.
Pour $z_0 \in K$ on pose $P_{z_0} (z) = z - z_0 \in K[z]$.
Alors,
$$
|\cS| = \cS(P_0)
\ \mbox{ et } \
\diam(\cS) = \inf_{z \in K} \cS(P_z)~.
$$
Lorsque $\cS$ est un point non singulier de $\AK$, on~a
$$
| \cS | = \sup_{B_\cS}|z|
\ \mbox{ et } \
\diam(\cS) = \diam(B_\cS) ~.
$$ En particulier, la restriction de $| \cdot |$ {\`a} $K$
co{\"\i}ncide avec la norme de~$K$.  La fonction~$| \cdot |$ s'annule
uniquement au point~$0$.  Pour tout $\cS \in \AK$, on~a $|\cS| \ge
\diam(\cS)$ et $\diam (\cS) =0$ si et seulement si $\cS \in K$.
Enfin, la fonction $|\cdot|$ est continue et s'{\'e}tend
contin{\^u}ment {\`a}~$\PKber$ en posant $|\infty| = + \infty$.

A l'aide des fonctions pr{\'e}c{\'e}dentes, on d{\'e}finit maintenant:
$$ \sup \{ \cS, \cS' \} = \diam(\cS \vee \cS')~, \text{ pour }
\cS, \cS' \in\PKber ~.
$$
Lorsque $\cS$ et $\cS'$ sont des points non singuliers de $\AK$, on~a
$$
\sup \{ \cS, \cS' \} = \sup \{ |z - z'| , \,  z \in B_\cS, z' \in B_{\cS'} \}~,
$$ et en particulier pour tous $z, z' \in K$ on~a $\sup \{ z, z' \}
= |z - z'|$.  On v{\'e}rifie ais{\'e}ment que
$$
\sup \{ \cdot, 0 \} = | \cdot |
\mbox{ et }
\sup \{ \cdot, \cScan \} = \max \{ 1, |\cdot| \}~.
$$

La m{\'e}trique sph{\'e}rique\footnote{aussi appel\'ee \og small model metrics \fg{} dans~\cite{BR2}}~$\dpk$ sur~$\PK$ est d{\'e}finie, pour
$z, w \in K$, par
$$
\dpk (z, w) = \frac{2 |z-w|}{\max\{1, |z|\} \times \max\{ 1, |w|\}} ~,
$$ et $\dpk(z, \infty) = 2 \max \{ 1, |z| \}^{-1}$.  On {\'e}tend
naturellement cette m{\'e}trique {\`a}~$\PKber$ en posant, pour $\cS,
\cS' \in \AK$~:
$$ \dpk (\cS, \cS') = \frac{ 2 \sup \{ \cS, \cS'\}} {\max \{ 1, |\cS|
\} \times \max \{ 1, | \cS'| \}}
- \frac{\diam(\cS)}{\max \{ 1, |\cS| \}^2}
- \frac{\diam(\cS')}{\max \{ 1, |\cS'| \}^2} ~,
$$ et $\dpk(\cS, \infty) = 2 \max \{ 1, |\cS| \}^{-1}$.  On
v{\'e}rifie que cette m{\'e}trique est compatible avec la structure
d'arbre de $\PKber$ au sens que pour tous $\cS, \cS', \cS'' \in
\PKber$ on~a $\dpk (\cS, \cS') = \dpk(\cS,\cS'') + \dpk(\cS'', \cS')$
si et seulement si $\cS'' \in [\cS,\cS']$.  L'espace m{\'e}trique
$(\PKber,\dpk)$ est complet, mais il n'est pas localement compact.
En particulier la topologie sur~$\PKber$ induite par cette distance ne co{\"{\i}}ncide pas avec la topologie introduite pr{\'e}c{\'e}demment.
Elle ne jouera pas de r\^ole dans la suite.

Introduisons maintenant quelques notations.  Une \textit{boule
ouverte} (resp. \textit{ferm{\'e}e}) de~$\AK$ est un ensemble de la
forme $ \{ \cS \in \PKber, \sup \{ \cS, a \} < r \}$ (resp. $\{ \cS
\in \PKber, \sup \{ \cS, a \} \le r \}$), o{\`u} $a \in K$ et $r > 0$.
On notera $\sB(z,r)$ (resp. $\bar{\sB}(z,r)$) ces ensembles.
Leurs intersections avec $K$ seront not\'ees
$$ B(z,r) = \sB(z,r) \cap K,
\bar{B}(z,r) = \bar{\sB}(z,r) \cap K. $$

Une \textit{boule ouverte} (resp. \textit{ferm{\'e}e}) de~$\PKber$ est
une boule ouverte (resp. ferm{\'e}e) de~$\AK$ ou le compl{\'e}mentaire
dans~$\PKber$ d'une boule ferm{\'e}e (resp. ouverte) de~$\AK$.  Il est
facile de voir que toute boule de~$\PKber$ est connexe.

Un \emph{affino{\"\i}de} (resp. \emph{ouvert fondamental}) est une r{\'e}union finie d'intersections finies (non vides) de boules ferm\'es (resp. ouvertes).
Un ouvert fondamental~$U$ poss{\`e}de un nombre fini de points dans son bord.
Notons enfin que les boules ouvertes (resp. les ouverts fondamentaux) 
de~$\PKber$ forment une sous-base (resp. base) de la
topologie de~$\PKber$.

\subsubsection*{L'espace hyperbolique~$\HK$}
Rappelons que~$\HK=\PKber\setminus\PK$.  C'est un ensemble
conne\-xe, donc un sous-arbre de $\PKber$. La fonction\footnote{aussi appel\'ee \og big model metrics \fg dans~\cite{BR2}}~$\dhyp$
d{\'e}finie par
$$
\dhyp(\cS, \cS')
=
2 \log\sup \{ \cS, \cS' \} - \log \diam(\cS) - \log \diam(\cS') ~,
$$ est une distance sur $\HK$.  Lorsque $\cS \le \cS'$ on~a $
\dhyp(\cS, \cS') = \log \left( \diam(\cS')/ \diam(\cS)\right)$.
L'espace m{\'e}trique $(\HK, \dhyp)$ est complet et la m{\'e}trique
est {\`a} nouveau compatible avec la structure d'arbre de
$\HK$. Notons de plus que $\dhyp$ est invariante par l'action du
groupe $\mathrm{PGL}(2,K)$ des automorphismes de~$\PK$. De ce fait, on d{\'e}duit que
$(\HK,\dhyp)$ est isom{\'e}trique {\`a} l'arbre r{\'e}el de
$\mathrm{PGL}(2,K)$ d{\'e}crit dans~\cite{T},
voir~\cite[\S7.2]{R2}.
Enfin pour pour chaque $\cS \in \HK$ on~a
\begin{equation}\label{eforget}
\dhyp(\cS, \cScan) = - \log \dpk (\cS, \PK)~.
\end{equation}
Fixons un point base $\cS_0 \in \HK$.  Le \textit{produit de Gromov} est la
fonction
$$ \langle \cdot\, , \cdot \rangle_{\cS_0} : \PKber \times \PKber \to [0, + \infty] $$
d{\'e}finie comme suit.
{\'E}tant donn{\'e}s $\cS, \cS' \in
\PKber$, notons $\cS''$ l'unique point de $\PKber$ qui est entre $\cS$
et $\cS'$, entre $\cS$ et $\cS_0$ et entre $\cS'$ et $\cS_0$.
On pose alors
$$
\langle \cS , \cS' \rangle_{\cS_0}
=
\begin{cases}
\dhyp(\cS'', \cS_0) & \text{si } \cS'' \in \HK ~;\\
+ \infty & \text{si } \cS'' \in \PK~.
\end{cases}
$$
On   v{\'e}rifie    facilement   que    $\langle   \cS    ,   \cS'
\rangle_{\cS_0}= + \infty$ si et  seulement si $\cS = \cS'  \in \PK$; et
que $\langle \cS , \cS' \rangle_{\cS_0} = 0$ si et seulement si $\cS_0
\in  [\cS,\cS']$.
En  particulier, pour  tout  $\cS \in  \PKber$ on  a
$\langle \cS , \cS_0 \rangle_{\cS_0} = 0$.
En g{\'e}n{\'e}ral, on~a $\langle \cS , \cS' \rangle_{\cS_0} \le \dhyp(\cS,
\cS_0)$, avec {\'e}galit{\'e} si et seulement si $\cS \in [\cS', \cS_0]$.

\subsection{Fractions rationnelles}\label{ss:fr}
Fixons une fraction rationnelle non constante~$R$ {\`a} coefficients
dans~$K$.  On v{\'e}rifie que l'image de tout ouvert fondamental est
encore un ouvert fondamental.  Une preuve est donn{\'e}e
dans~\cite[Proposition~2.6]{R1}.

\subsubsection*{Degr{\'e} topologique}
Le \textit{degr{\'e} topologique} $\degtop(R)$ de~$R$ est par
d{\'e}finition l'entier
$$
\degtop(R)
\=
\max_{\cS \in \PK} \# (R^{-1}(\cS))~.
$$ Lorsque la caract{\'e}ristique de~$K$ est nulle, cet entier est {\'e}gal
{\`a}~$\deg(R)$.  Lorsque la caract{\'e}ristique de~$K$ est $p > 1$,
on peut alors {\'e}crire~$R$ de fa{\c c}on unique comme $R(z) = Q(z^{q})$,
o{\`u}~$Q$ est une fraction rationnelle s{\'e}parable et $q \ge 1$ est
une puissance de~$p$.  Dans ce cas on~a $\degtop(R) = \deg(Q)$.
Notons que dans tous les cas $\degtop(R)$ divise~$\deg(R)$.

On v{\'e}rifie de plus que lorsque $\deg(R) > 1$, on~a~$\degtop(R) = 1$
si et seulement si la caract{\'e}ristique de~$K$ est $p>1$, et~$R$ est
conjugu{\'e}e {\`a} un it{\'e}r{\'e} de l'automorphisme de Frobenius, 
c'est-{\`a}-dire $R(z) = z^q$ avec $q$ est une puissance de~$p$ pour un choix
convenable de coordonn{\'e}es.

\subsubsection*{Degr{\'e} local}
On d{\'e}finit le degr{\'e} local de~$R$ en un point $z_0 \in \PK$
comme suit.  Quitte {\`a} faire un changement de coordonn{\'e}es, on
peut supposer que $\infty\notin\{ z_0, R(z_0)\}$. Dans ce cas, on
{\'e}crit $R(z_0+h) = R(z_0) + a h^k + \mathcal{O}(h^{k+1})$ avec
$a\neq 0$, et on pose $\deg_R(z_0) \= k$. C'est un nombre entier
strictement positif. On v{\'e}rifie facilement que $\deg_{R\circ R'} =
\deg_R \circ R' \times \deg_{R'}$ pour tout couple $R,R'\in K(z)$, et
donc que le degr{\'e} local ne d{\'e}pend pas des choix de
coordonn{\'e}es.  On v{\'e}rifie aussi que pour tout $z \in \PK$ on~a
$\sum_{R(w)=z} \deg_R(w) = \deg(R)$.  On v{\'e}rifie sans difficult{\'e}
que le degr{\'e} local de~$R$ en chaque point de~$\PK$ est {\'e}gal {\`a}
l'entier $\deg(R) / \degtop(R)$, avec au plus un nombre fini
d'exceptions.

La proposition suivante permet d'{\'e}tendre la d{\'e}finition du
degr{\'e} local {\`a}~$\PKber$.  Une d{\'e}finition g{\'e}om{\'e}trique est
donn{\'e}e en~\cite[\S2]{R1}.  On donne ici une approche plus
alg{\'e}brique qui a l'avantage de se g{\'e}n{\'e}raliser en toute
dimension.
\begin{Prop-def}\label{pf:degre local}
La fonction degr{\'e} local $\deg_R$ s'{\'e}tend de mani\`ere unique en une fonction
d{\'e}finie sur $\PKber$ et {\`a} valeurs dans les entiers strictement
positifs, v{\'e}rifiant la propri{\'e}t{\'e} suivante~:
\begin{itemize}
 \item[(*)]
pour tout ouvert fondamental~$V$, toute composante connexe~$U$
de $R^{-1}(V)$ et tout $\cS_0 \in V$, l'entier
$$
\sum_{R(\cS) = \cS_0, \cS \in U} \deg_R(\cS)~,
$$
est ind{\'e}pendant du choix du point $\cS_0 \in V$. 
\end{itemize}
\end{Prop-def}
On en d{\'e}duit facilement le r{\'e}sultat suivant~:
\begin{Prop}
  La fonction~$\deg_R$ est semi-continue sup{\'e}rieurement et prend
ses valeurs dans $[\deg(R)/\degtop(R), \ldots, \deg(R)]$.  De plus, pour
tout~$\cS \in \PKber$ on~a,
\begin{equation}\label{e:deg-local}
\# (R^{-1}(\cS)) \le \degtop(R) ~,
\text{ et }
\sum_{R(\cS')=\cS} \deg_R(\cS') = \deg(R) ~.
\end{equation}
Enfin, pour tout couple de fractions
rationnelles $R,R'$ non nulles, on~a $\deg_{R\circ R'} = \deg_R
\circ R' \times \deg_{R'}$.
\end{Prop}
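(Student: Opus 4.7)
The plan is to deduce all four assertions---the bounds, the sum formula, upper semi-continuity and multiplicativity---from property~(*) in Proposition-D{\'e}finition~\ref{pf:degre local}, the classical sum formula on $\PK$, and the compactness of $\PKber$.

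The cornerstone is a topological setup. Fix $\cS \in \PKber$; since $R^{-1}(\cS) = \{\cS_1,\ldots,\cS_k\}$ is finite, I would choose pairwise disjoint fundamental open neighborhoods $U_i$ of each $\cS_i$. By compactness of $\PKber$ and continuity of $R$, the image $R(\PKber \setminus \bigcup_i U_i)$ is a compact set avoiding $\cS$, so a small enough fundamental open neighborhood $V$ of $\cS$ satisfies $R^{-1}(V) \subset \bigsqcup_i U_i$. Shrinking $V$ further if needed, each $U_i$ becomes a connected component of $R^{-1}(V)$ and $R^{-1}(V) = \bigsqcup_i U_i$.

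Applying property~(*) to each pair $(V, U_i)$, the sum $\sigma_i(z) \= \sum_{R(\cS')=z,\, \cS' \in U_i} \deg_R(\cS')$ is constant on $V$. Evaluating at $z = \cS$ yields $\sigma_i(\cS) = \deg_R(\cS_i)$, while at a classical point $z \in V \cap \PK$ the total $\sum_i \sigma_i(z) = \sum_{R(w)=z} \deg_R(w) = \deg(R)$ by the classical formula. This gives the identity $\sum_{R(\cS')=\cS} \deg_R(\cS')=\deg(R)$ and the upper bound $\deg_R \le \deg(R)$. For the lower bound, I would choose $z \in V \cap \PK$ generic so that all its classical preimages have local degree $\deg(R)/\degtop(R)$, and note that at least one such preimage lies in $U_i$ (since $\sigma_i(\cS) \ge 1$ forces $\sigma_i(z) \ge 1$); hence $\deg_R(\cS_i) \ge \deg(R)/\degtop(R)$, and the bound $\#(R^{-1}(\cS)) \le \degtop(R)$ follows immediately by dividing the sum formula by this minimum.

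Upper semi-continuity is obtained by the same construction applied to $R(\cS)$: take fundamental opens $U \ni \cS$ and $V \ni R(\cS)$ with $U$ the component of $R^{-1}(V)$ containing $\cS$ and $\cS$ the only preimage of $R(\cS)$ in $U$. Then for any $\cS' \in U$, the value $\deg_R(\cS')$ appears in $\sigma_U(R(\cS')) = \sigma_U(R(\cS)) = \deg_R(\cS)$, hence $\deg_R(\cS') \le \deg_R(\cS)$. For multiplicativity, I would invoke the uniqueness part of Proposition-D{\'e}finition~\ref{pf:degre local}: the function $\cS \mapsto \deg_R(R'(\cS))\,\deg_{R'}(\cS)$ agrees with $\deg_{R \circ R'}$ on $\PK$ classically, and satisfies property~(*) for $R \circ R'$ via the factorisation
\[
\sum_{(R \circ R')(\cS) = z,\, \cS \in U} \deg_R(R'(\cS))\,\deg_{R'}(\cS)
= \sum_{R(w)=z,\, w \in W} \deg_R(w) \sum_{R'(\cS) = w,\, \cS \in U} \deg_{R'}(\cS),
\]
where $W$ is the connected component of $R^{-1}(V)$ containing $R'(U)$; two successive applications of~(*) (first to $R'$ on $(W,U)$, then to $R$ on $(V,W)$) show the total is independent of $z \in V$. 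The main obstacle is the initial topological setup---guaranteeing that $R^{-1}(V)$ has no stray components disjoint from the $U_i$. This genuinely requires the compactness of $\PKber$, which is precisely the feature that makes the Berkovich picture more tractable than the na{\"\i}ve topology on $\PK$ alone.
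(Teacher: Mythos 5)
The paper itself gives no proof here---it simply says the Proposition is easily deduced from the Proposition-D{\'e}finition---so there is no argument to compare yours against; you have correctly supplied the omitted verification. Your strategy (reduce everything to property~(*) plus the known facts on $\PK$) is exactly what is expected, and the four deductions are all sound: the sum formula, the bounds, upper semi-continuity, and multiplicativity via the uniqueness clause.

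A few small points you brush past deserve one extra sentence each. First, you need $V$ connected for the picture $R^{-1}(V)=\bigsqcup W_i$ to make sense and for property~(*) to be internally consistent; taking $V$ to be a small open ball (rather than an arbitrary fundamental open, which may be disconnected) fixes this. Second, the elimination of ``stray'' components of $R^{-1}(V)$ not meeting $R^{-1}(\cS)$ is cleanest deduced from~(*) itself: such a component $W$ would give $\sum_{R(\cS')=\cS,\,\cS'\in W}\deg_R(\cS')=0$, hence $0$ for every $\cS_0\in V$ by~(*), contradicting $\deg_R\ge 1$ applied to any point of $W$. Third, in the multiplicativity step you apply~(*) with $W$ (a connected component of $R^{-1}(V)$) in the r\^ole of the fundamental open; this requires knowing that such a component is itself a fundamental open, which follows since the boundary of $W$ is finite (it lies in $R^{-1}(\partial V)$)---or alternatively one can shrink $V$ so that the relevant component is a ball. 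None of these affect the correctness of the argument; they merely make it airtight.
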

Lorsque~$\cS$ est un point rationnel de~$\PKber$, on peut calculer
$\deg_R(\cS)$ de la mani{\`e}re suivante. Quitte {\`a} faire un
changement de coordonn{\'e}es {\`a} la source et au but, on peut
supposer que $\cS = R(\cS) = \cScan$.  Ce qui signifie que l'on peut
{\'e}crire $R = P/Q$ avec deux polyn\^omes $P, Q \in \OK[z]$ dont les
r{\'e}duction $\widetilde{P}, \widetilde{Q}$ sont non nulles, et telles
que la fraction rationnelle $\widetilde{R} \= \widetilde{P}/\widetilde{Q}$ 
soit non constante. Le degr{\'e} local en $\cScan$ est alors le degr{\'e} de
$\widetilde{R}$.

\begin{proof}[D{\'e}monstration de la 
Proposition-D{\'e}finition~\ref{pf:degre local}] Notons tout d'abord que
  $R^{-1} \{ \cS \}$ est un ensemble fini de cardinal $\le d$ pour
  tout $\cS \in \PKber$. Pour $\cS \in \PK$, c'est clair. Sinon $\cS
  \in \HK$ d{\'e}finit une norme sur $K(z)$, et donc sur le sous-corps
  $R^* K(z) \= \{ \phi \circ R, \,\phi \in K(z) \}$ de $K(z)$.  Le
  corps $K(z)$ est une extension finie de degr{\'e} au plus $d$ de
  $R^* K(z)$.  Donc $\cS|_{R^*K(z)}$ admet au plus $d$ extensions
  {\`a} $K(z)$, voir~\cite{ZS}. Ceci est exactement dire que $\cS$
  poss{\`e}de au plus $d$ pr{\'e}images.

On montre maintenant facilement l'unicit{\'e} de la fonction $\deg_R$.
Soit $\cS \in \PKber$ et fixons $U$ un  ouvert fondamental contenant
$\cS$, tel que $R: U \to R(U)$ soit propre et $R^{-1}\{ R(\cS) \} \cap
U= \{ \cS\}$. Un tel ouvert existe toujours car $R(\cS)$ admet un
nombre fini de pr{\'e}images. On applique maintenant la relation (*) 
pour tout point $z_0 \in
R(U)\cap \PK$, et l'on obtient $\deg_R(\cS) = \sum_{z\in U,\, R(z) =z_0} \deg_R(z)$, ce qui
d{\'e}termine $\deg_R$ de mani{\`e}re unique.

Pour tout $z\in \PKber$, notons
$\cO_z$ l'anneau des germes de fonctions analytiques en $z$, et
$\fm_z$ l'id{\'e}al des fonctions qui s'annulent en $z$. Lorsque $z\in
\PK$, $\cO_z$ est un anneau local d'id{\'e}al maximal $\fm_z$; sinon
$\cO_z$ est un corps et $\fm_z = (0)$.  Dans tous les cas, on note
$\kappa(z)$ le corps $\cO_z /\fm_z$.

L'application analytique $R$ est finie~\cite[3.1.10]{berko2}, donc
pour tout $z\in \PKber$ l'anneau $\cO_z$ est un module de type fini
sur $\cO_{R(z)}$~\cite[3.1.6]{berko2}. On d{\'e}finit alors $$\deg_R(z)
= \dim_{\kappa(R(z))} (\cO_z/ \fm_{R(z)} \cO_z)~.$$ C'est le nombre
minimal de g{\'e}n{\'e}rateurs de $\cO_z$ vu comme
$\cO_{R(z)}$-module~\cite[Theorem 2.3]{matsu}.

Pour v{\'e}rifier tout d'abord que la fonction ainsi d\'efinie co\"{\i}ncide bien avec
le degr\'e local sur $\PK$, il suffit de traiter le cas o{\`u} $z =0 =
R(z)$. Par un changement ad{\'e}quat de coordonn{\'e}es analytiques, on
peut supposer $R(z) = az^k + O(z^{k+1})$ avec $a \neq0$ et $k\ge1$, et
on veut montrer $\deg_R(0) =k$. Lorsque la caract{\'e}ristique de $K$ ne
divise pas $k$, alors on peut se ramener {\`a} $R(z) = z^k$. Sinon la
situation est plus compliqu{\'e}e mais dans de bonnes coordonn{\'e}es, on
peut toujours {\'e}crire $R(z) = az^k + \sum_{j>k} a_j z^j$ avec
$|a_j|\to 0$, $\max\{|a|, |a_j|\} =1$. Il faut montrer que
l'anneau de s{\'e}rie convergente $K\{ z \}$ est un module libre sur
$K\{ R(z)\}$ de rang $k$. Pour cela, on montre qu'il est engendr{\'e} par la
famille libre $1, z, ... , z^{k-1}$. Le fait que cette famille soit libre
est facile. Pour conclure, il suffit de trouver des $f_i \in K\{z \}$
tels que $z^k = f_0 \circ R + ... + z^{k-1} f_{k-1} \circ R$, ce qui
se fait classiquement en r{\'e}solvant l'{\'e}quation successivement
modulo $z^n$ avec $n$ croissant.

Pour v{\'e}rifier (*), on remarque que sous les hypoth{\`e}ses de
l'{\'e}nonc{\'e}, l'application analytique entre courbes analytiques $R: U
\to V$ est ferm{\'e}e (au sens de Berkovich) et non localement constante,
et donc plate par~\cite[3.2.9]{berko2}.  On regarde maintenant le
faisceau $R_* \cO_U$, dont la fibre en $w\in V$ est donn{\'e}e par
$(R_*\cO_U)_w \simeq \oplus_{z \in R^{-1}(w)\cap U} \cO_{z}$. 
Comme $R$ est plat, $(R_* \cO_U)_w$ est libre sur $\cO_w$. Or le
faisceau $R_*\cO_U$ est
coh{\'e}rent~\cite[9.4.4/3]{remmert},~\cite[1.3.4]{berko2}, donc est
lui-m{\^e}me localement libre. Il est en particulier de rang
constant\footnote{le rang d'un faisceau $\cF$ en $z$ est {\'e}gal {\`a}
$\dim_{\kappa(z)} \cF_z/\fm_z\cF_z$}, ce qui implique (*).
\end{proof}

\subsubsection*{Action sur les mesures positives}
Rappelons que pour toute fonction $f: \PKber \to\PKber$ continue et pour toute mesure (sign\'ee)
de Radon $\rho$ sur $\PKber$, on d\'efinit l'image directe $f_*\rho$ en imposant
$\int \phi\, d(f_*\rho) = \int (\phi \circ f) \, d\rho$ pour toute fonction mesurable. On~a alors pour tout bor\'elien $E$, $(f_*\rho)(E) = \rho (f^{-1}(E))$. Si $\rho$ est une mesure de probabilit\'e, alors $f_*\rho$ l'est encore.

On d{\'e}finit maintenant l'action d'une fraction rationnelle
par image r\'eciproque sur les mesures de Radon de mani\`ere analogue.
\begin{Def}
 Pour toute fonction mesurable 
$\phi: \PKber \to \R$, on pose
\begin{equation}\label{eq-pousse}
(R_* \phi)(\cS) \= \sum_{R(\cS') = \cS} \deg_R(\cS') \phi (\cS')~,
\end{equation}
\end{Def}

\begin{Prop}\label{prop-def}
Si $\phi: \PKber \to \R$ est continue, alors $R_*\phi$ l'est aussi, et
$R_*$ d{\'e}finit un op{\'e}rateur
lin{\'e}aire sur l'espace des fonctions continues, v{\'e}rifiant $\sup |R_*
\phi| \le \deg(R) \times \sup|\phi|$.
  
Cet op{\'e}rateur induit donc par dualit{\'e} une action sur les mesures de
Radon, not{\'e}e~$R^*$, telle que pour toute fonction continue~$\phi :
\PKber \to \R$ et toute mesure~$\rho$ d{\'e}finie sur~$\PKber$, on ait
$\int \phi\, d(R^*\rho) = \int (R_*\phi) \, d\rho$.
\end{Prop}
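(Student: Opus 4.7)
Le plan est de d{\'e}montrer d'abord la continuit{\'e} de $R_*\phi$; la borne $\sup|R_*\phi| \le \deg(R) \sup|\phi|$ en d{\'e}coulera imm{\'e}diatement de la relation $\sum_{R(\cS')=\cS} \deg_R(\cS') = \deg(R)$ rappel{\'e}e {\`a} la proposition pr{\'e}c{\'e}dente, et l'existence de l'op{\'e}rateur dual $R^*$ sur les mesures de Radon sera alors une cons{\'e}quence directe du th{\'e}or{\`e}me de repr{\'e}sentation de Riesz sur le compact $\PKber$.

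Pour la continuit{\'e} en un point $\cS_0 \in \PKber$, je commencerais par {\'e}num{\'e}rer ses pr{\'e}images $\cS'_1, \dots, \cS'_k$, en nombre fini. Fixant $\e > 0$, je choisirais par continuit{\'e} de $\phi$ des voisinages fondamentaux disjoints $W_i$ de $\cS'_i$ sur lesquels $|\phi - \phi(\cS'_i)| < \e$. Je prendrais ensuite un voisinage fondamental $V$ de $\cS_0$ suffisamment petit pour que, pour chaque $i$, la composante connexe $U_i$ de $R^{-1}(V)$ contenant $\cS'_i$ v{\'e}rifie $U_i \subset W_i$ et $R^{-1}(\cS_0) \cap U_i = \{\cS'_i\}$. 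La propri{\'e}t{\'e}~(*) de la Proposition-D{\'e}finition~\ref{pf:degre local} garantirait alors
\[
\sum_{\substack{\cS''' \in U_i \\ R(\cS''') = \cS''}} \deg_R(\cS''') = \deg_R(\cS'_i) \qquad \text{pour tout } \cS'' \in V.
\]

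La principale difficult{\'e} sera de v{\'e}rifier que pour $\cS''$ assez proche de $\cS_0$, \emph{toutes} les pr{\'e}images de $\cS''$ sont contenues dans $\bigcup_i U_i$. Cela se ferait par un argument de compacit{\'e} s{\'e}quentielle: si cela {\'e}chouait, il existerait une suite $\cS''_n \to \cS_0$ et des pr{\'e}images $\cS'''_n \in R^{-1}(\cS''_n) \setminus \bigcup_i U_i$; l'espace $\PKber$ {\'e}tant s{\'e}quentiellement compact, on extrairait une valeur d'adh{\'e}rence $\cS^*$ v{\'e}rifiant $R(\cS^*) = \cS_0$, donc $\cS^* = \cS'_j$ pour un certain $j$, ce qui contredirait le caract{\`e}re ouvert de $U_j$.

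Muni de ce contr{\^o}le, on {\'e}crirait, pour $\cS''$ proche de $\cS_0$,
\[
R_*\phi(\cS'') - R_*\phi(\cS_0) = \sum_{i=1}^{k} \sum_{\substack{\cS''' \in U_i \\ R(\cS''') = \cS''}} \deg_R(\cS''') \bigl[\phi(\cS''') - \phi(\cS'_i)\bigr],
\]
ce qui donnerait $|R_*\phi(\cS'') - R_*\phi(\cS_0)| \le \e \deg(R)$, achevant la preuve de la continuit{\'e}. La lin{\'e}arit{\'e} de $R_*$ est imm{\'e}diate. Pour l'action duale, l'application $\phi \mapsto \int R_*\phi\, d\rho$ {\'e}tant une forme lin{\'e}aire continue sur $C(\PKber)$, le th{\'e}or{\`e}me de repr{\'e}sentation de Riesz fournit une unique mesure de Radon $R^*\rho$ satisfaisant la relation souhait{\'e}e.
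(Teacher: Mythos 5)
Your approach is essentially the paper's: fix $\e$, choose disjoint fundamental neighbourhoods $W_i$ of the preimages $\cS'_i$ on which $\phi$ is $\e$-close to $\phi(\cS'_i)$, shrink a fundamental neighbourhood $V$ of $\cS_0$ so that the componentwise degree count~(*) applies, and conclude. The paper simply asserts the existence of a suitable $V$; you try to justify it, which is the right instinct, but the sequential-compactness step has a genuine gap. To pass from \og pour tout voisinage $V$ de $\cS_0$, un point de $V$ poss\`ede une pr\'eimage hors de $\bigcup_i U_i$ \fg\ to the existence of a sequence $\cS''_n \to \cS_0$ witnessing the failure, you need a countable neighbourhood basis at $\cS_0$. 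Although $\PKber$ is sequentially compact, it is not first countable in general: at a rational point of $\HK$ over a field with uncountable residue field $\tK$, the uncountably many residue directions rule out any countable basis of neighbourhoods, so the sequence you want may simply not exist. You also assert $U_i \subset W_i$ without proof; that too needs an argument.

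Both points are repaired at once by a direct compactness argument, which is in fact simpler. The set $\PKber\setminus\bigcup_i W_i$ is compact, hence its image $R\bigl(\PKber\setminus\bigcup_i W_i\bigr)$ is closed and, because $R^{-1}(\cS_0)\subset\bigcup_i W_i$, it avoids $\cS_0$. Its complement is therefore an open neighbourhood $V$ of $\cS_0$ with $R^{-1}(V)\subset\bigcup_i W_i$. Since the $W_j$ are disjoint, open, and cover $R^{-1}(V)$, and $\PKber$ is locally connected, each connected component of $R^{-1}(V)$ lies in exactly one $W_j$; in particular $U_i \subset W_i$ and $R^{-1}(\cS_0)\cap U_i=\{\cS'_i\}$. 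Finally $\sum_i\deg_R(\cS'_i)=\deg(R)$ combined with~(*) applied at $\cS_0$ shows that the components of $R^{-1}(V)$ other than $U_1,\dots,U_k$ carry total degree zero, hence $R^{-1}(\cS'')\subset\bigcup_i U_i$ for every $\cS''\in V$, as you wanted. With this substitution your final estimate and the Riesz representation step are correct.
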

\begin{proof}
Il suffit de montrer la premi{\`e}re assertion.  Fixons $\e>0$, et
$\cS_* \in \PKber$.  Pour chaque $\cS \in R^{-1}(S_*)$ choisissons un
voisinage~$U_{\cS}$ de~$\cS$ tel que $\sup_{U_\cS} |\phi -
\phi(\cS)| \le \e$.  Choisissons de plus un voisinage~$U_*$
de~$\cS_*$ tel que $R^{-1}(U_*) \subset \bigcup_{\cS \in R^{-1}(U_*)}
U_\cS$ et l'application restreinte $R: U_\cS \to U_*$ est de degr\'e $\deg_R(\cS)$.  On obtient alors
$$
\sup_{U_*} |R_*\phi - R_*\phi (\cS_*)|
\le
\sum_{\cS \in R_*^{-1}\{ \cS_*\}} \deg_R(\cS) \times \e \le \deg(R) \times \e ~.
$$
\end{proof}
Indiquons sans preuve quelques propri\'et\'es de ces actions:
\begin{Prop}
Pour toute fraction rationnelle et pour toute mesure de Radon, 
on~a $R_* R^* \rho = d \rho$. Pour tout couple de fractions rationnelles $R_1, R_2$, on~a:
$(R_1 \circ R_2)_* = (R_1)_*\circ (R_2)_*$, et $(R_1 \circ R_2)^* = (R_2)^*\circ (R_1)^*$. 
\end{Prop}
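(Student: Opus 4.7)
Le plan est d'{\'e}tablir les trois identit{\'e}s au niveau des fonctions continues, d'o{\`u} elles d{\'e}couleront sur les mesures par dualit{\'e}. Pour la premi{\`e}re, $R_*R^*\rho = \deg(R)\,\rho$, il suffit, par d{\'e}finition de $R^*$ comme adjoint de $R_*$, de v{\'e}rifier que $R_*(\phi\circ R) = \deg(R)\,\phi$ ponctuellement pour toute fonction continue $\phi$ sur $\PKber$. Or d'apr{\`e}s la formule~\eqref{eq-pousse}, la valeur de $\phi$ en $\cS$ se factorise devant la somme sur les pr{\'e}images, et il reste $\sum_{R(\cS')=\cS}\deg_R(\cS')$, qui vaut $\deg(R)$ gr{\^a}ce {\`a}~\eqref{e:deg-local}.

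Pour la seconde identit{\'e} $(R_1\circ R_2)_* = (R_1)_*\circ(R_2)_*$, je proc{\'e}derais en {\'e}crivant directement les deux membres comme sommes via~\eqref{eq-pousse} et en les identifiant {\`a} l'aide de la formule de composition $\deg_{R_1\circ R_2}(\cS') = \deg_{R_1}(R_2(\cS'))\cdot\deg_{R_2}(\cS')$ {\'e}nonc{\'e}e dans la proposition pr{\'e}c{\'e}dente. Plus pr{\'e}cis{\'e}ment, en regroupant les pr{\'e}images de $\cS$ par $R_1\circ R_2$ suivant la valeur interm{\'e}diaire $\cS'' = R_2(\cS')\in R_1^{-1}(\cS)$, la double somme apparaissant dans $(R_1)_*((R_2)_*\phi)(\cS)$ se reconstruit imm{\'e}diatement en $(R_1\circ R_2)_*\phi(\cS)$. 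La troisi{\`e}me identit{\'e} $(R_1\circ R_2)^* = (R_2)^*\circ(R_1)^*$ s'obtient alors comme la transpos{\'e}e de la seconde.

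Aucune des trois assertions ne rec{\`e}le de v{\'e}ritable obstacle : elles sont des cons{\'e}quences essentiellement formelles de la d{\'e}finition~\eqref{eq-pousse} et des deux propri{\'e}t{\'e}s multiplicatives du degr{\'e} local (sommation sur les fibres et composition). Le seul point {\`a} ne pas perdre de vue est la finitude des ensembles $R^{-1}(\cS)$, assur{\'e}e par~\eqref{e:deg-local}, qui l{\'e}gitime les interversions de sommations utilis{\'e}es dans le regroupement.
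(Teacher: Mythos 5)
Votre argument est correct et, de fait, c'est la d{\'e}rivation standard : les trois identit{\'e}s se ram{\`e}nent, par dualit{\'e}, aux {\'e}galit{\'e}s ponctuelles $R_*(\phi\circ R)=\deg(R)\,\phi$ et $(R_1\circ R_2)_*\phi=(R_1)_*\bigl((R_2)_*\phi\bigr)$, que vous obtenez toutes deux {\`a} partir de la formule~\eqref{eq-pousse}, de la sommation sur les fibres~\eqref{e:deg-local} et de la multiplicativit{\'e} du degr{\'e} local $\deg_{R_1\circ R_2}=(\deg_{R_1}\!\circ R_2)\cdot\deg_{R_2}$. L'article ne donne pas de d{\'e}monstration (\og{}Indiquons sans preuve quelques propri{\'e}t{\'e}s\fg{}), mais votre preuve est pr{\'e}cis{\'e}ment celle qu'il sous-entend ; le regroupement des pr{\'e}images par valeur interm{\'e}diaire et le recours {\`a} la finitude des fibres pour justifier l'interversion des sommes sont les deux seuls points {\`a} mentionner, et vous les mentionnez.
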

\begin{Prop}\label{prop:calcul}
  Si~$\rho$ est une mesure de probabilit{\'e}, alors~$R^*\rho$ est une
  mesure positive de masse~$\deg(R)$ dont le support topologique est
  {\'e}gal {\`a} la pr{\'e}image par~$R$ du support topologique
  de~$\rho$.  Enfin, pour tout $\cS \in \PKber$ on~a
$$
(R^*\rho) \{ \cS \} = \deg_R(\cS) \times \rho \{ R(\cS) \} ~.
$$
\end{Prop}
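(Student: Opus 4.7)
Les deux premières assertions découlent directement de la dualité définissant $R^*$. En appliquant la relation $\int \phi\, d(R^*\rho) = \int R_*\phi\, d\rho$ à $\phi \equiv 1$, la seconde identité de~\eqref{e:deg-local} donne $R_* 1 \equiv \deg(R)$, d'où $R^*\rho$ est de masse $\deg(R)$; la positivité est analogue puisque $R_*$ préserve la positivité des fonctions continues.

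Pour identifier le support, je procéderais par contraposée. Si $R(\cS) \notin \supp \rho$, on prend un voisinage ouvert fondamental $U$ de $\cS$ assez petit pour que son image $R(U)$, qui est encore un ouvert fondamental (voir~\S\ref{ss:fr}), soit contenue dans un ouvert disjoint de $\supp \rho$; toute fonction continue $\phi$ à support dans $U$ satisfait alors $\int \phi\, d(R^*\rho) = \int R_*\phi\, d\rho = 0$, car $R_*\phi$ est à support dans $R(U)$. Réciproquement, si $R(\cS) \in \supp \rho$, pour tout voisinage ouvert $U$ de $\cS$ et toute fonction continue $\phi \ge 0$ à support dans $U$ avec $\phi(\cS) = 1$, on a $R_*\phi(R(\cS)) \ge \deg_R(\cS) > 0$; la continuité de $R_*\phi$ combinée à $R(\cS) \in \supp \rho$ force alors $\int R_*\phi\, d\rho > 0$.

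La formule ponctuelle est le point le plus délicat. La régularité extérieure de $R^*\rho$ sur l'espace compact Hausdorff $\PKber$, combinée au lemme d'Urysohn, permet d'écrire
$$
(R^*\rho)\{\cS\} = \inf \left\{ \int R_*\psi\, d\rho \, : \, \psi \in C(\PKber, \R_+), \ \psi(\cS) \ge 1 \right\}.
$$
La minoration $\ge \deg_R(\cS)\, \rho\{R(\cS)\}$ est immédiate: pour toute telle $\psi$, on a $R_*\psi(R(\cS)) \ge \deg_R(\cS)\psi(\cS) \ge \deg_R(\cS)$, et comme $R_*\psi \ge 0$, il vient $R_*\psi \ge \deg_R(\cS)\mathbf{1}_{\{R(\cS)\}}$ ponctuellement. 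Pour la majoration, on exploite la finitude de $R^{-1}\{R(\cS)\}$ pour trouver un voisinage ouvert fondamental $V$ de $\cS$ qui soit une composante connexe de $R^{-1}(R(V))$ et dans lequel $\cS$ est l'unique préimage de $R(\cS)$; pour $\psi$ à support dans $V$ avec $0 \le \psi \le 1$ et $\psi(\cS) = 1$, la propriété~(*) de la Proposition-Définition~\ref{pf:degre local} fournit $R_*\psi \le \deg_R(\cS)\mathbf{1}_{R(V)}$, d'où $\int R_*\psi\, d\rho \le \deg_R(\cS)\rho(R(V))$. En faisant varier $V$ dans une famille dirigée rétrécissant vers $\{\cS\}$, les images $R(V)$ parcourent une base de voisinages de $R(\cS)$, et la régularité extérieure de $\rho$ livre $\rho(R(V)) \to \rho\{R(\cS)\}$. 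L'obstacle principal réside dans ce passage à la limite: puisque $\PKber$ n'est pas métrisable, il faut raisonner avec des familles dirigées plutôt qu'avec des suites, ce qui repose sur la régularité extérieure des mesures de Radon sur les espaces compacts Hausdorff.
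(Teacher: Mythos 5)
Your proof is correct and takes essentially the same approach as the paper: duality $\int \phi\,d(R^*\rho)=\int R_*\phi\,d\rho$, Urysohn on the normal space $\PKber$, the counting property (*) of the local degree, and regularity of Radon measures on a compact Hausdorff space. The only difference is organizational — you split the pointwise formula into two inequalities via the infimum over Urysohn test functions, whereas the paper runs a single $\varepsilon$-argument controlling both $|\deg_R(\cS)\rho\{R(\cS)\}-\int R_*\phi\,d\rho|$ and $|(R^*\rho)\{\cS\}-\int\phi\,d(R^*\rho)|$ at once — but the underlying computation is identical.
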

\begin{proof}
  Il est clair que $R^*$ pr{\'e}serve la positivit{\'e} des mesures.
  L'image de la fonction constante {\'e}gale {\`a} $1$ par $R_*$ est
  la fonction constante {\'e}gale {\`a} $\deg(R)$
  par~\eqref{e:deg-local}. Donc la masse des mesures est
  multipli{\'e}e par $\deg(R)$ sous l'action de~$R^*$.  Le fait que
  pour une mesure~$\rho$ le support topologique de~$R^*\rho$ soit la
  pr{\'e}image par~$R$ du support topologique de~$\rho$, est une
  cons{\'e}quence imm{\'e}diate du fait que pour chaque fonction
  continue positive~$\phi : \PKber \to \R$ le support de la
  fonction~$R_*\phi$ est {\'e}gal {\`a} l'image par~$R$ du support
  de~$\phi$.

Pour montrer la derni{\`e}re {\'e}galit{\'e}, soient $\e > 0$ un r{\'e}el positif et
$V'$ un ouvert fondamental contenant~$R(\cS)$ tel que $|\rho
\{ R(\cS) \} - \rho (V') |\le \e $, et tel que la composante
connexe~$V$ de~$R^{-1}(V')$ contenant~$\cS$ v\'erifie $|(R^*\rho) \{
\cS \} - (R^*\rho)(V) |\le \e$ et $R^{-1} \{R(\cS)\} \cap V = \{
\cS\}$.  En particulier, pour tout $\cS_0' \in V'$ on~a $\sum_{\cS_0
\in R^{-1}(\cS_0') \cap V} \deg(\cS_0) = \deg_R(\cS)$
(Proposition-D{\'e}finition~\ref{pf:degre local}).
  
  Comme $\PKber$ est un espace normal, il existe une fonction continue
  $\phi$ {\`a} valeurs dans $[0,1]$ support{\'e}e dans $V$ et telle
  que~$\phi(\cS) = 1$.  Alors la fonction~$R_*\phi$ est {\`a}
  valeurs dans~$[0, \deg_R(\cS)]$, {\'e}tant nulle en dehors de~$V'$ et
  {\'e}gale {\`a} $\deg_R(\cS)$ en~$R(\cS)$.  on~a donc
$$ |\deg_R(\cS)\,\rho \{R( \cS) \} - \int (R_*\phi)\, d\rho|\le
\deg_R(\cS) \times \e~, \text{ et } |(R^*\rho) \{ \cS \} - \int
\phi\, d(R^*\rho)|\le \e~.
$$ On tire alors de l'{\'e}galit{\'e} $\int (R_*\phi)\, d\rho = \int
\phi\, d(R^*\rho)$, que
$$
|(R^*\rho)\{ \cS \} - \deg_R(\cS) \, \rho \{  R(\cS) \}|
\le
(\deg_R(\cS) + 1) \times \e.
$$
On conclut en faisant $\e\to0$.
\end{proof}


\subsection{Th{\'e}orie de Fatou et Julia}\label{ss:fatou-julia}
Fixons une fraction rationnelle~$R$ {\`a} coefficients dans~$K$ et de
degr{\'e} au moins~2.  Dans le cas complexe, il est plus
commode de d{\'e}finir en premier lieu l'ensemble de Fatou {\`a}
l'aide de propri{\'e}t{\'e}s d'{\'e}quicontinuit{\'e}. Dans le cas
non archim{\'e}dien il est plus convenable de proc{\'e}der de
mani{\`e}re l{\'e}g{\`e}rement diff{\'e}rente.  On va donc tout
d'abord rappeler quelques faits concernant les points exceptionnels.

\begin{Def}\label{d:exceptionel}
  Soit $R\in K(z)$ une fraction rationnelle de degr{\'e} au moins~2.
Un point $z\in \PK$ est dit \textit{exceptionnel} si l'ensemble
$\bigcup_{n\ge 0} R^{-n} \{ z \}$ est fini. On note $E_R\subset \PK$
l'ensemble des points exceptionnels.
\end{Def}
Lorsque~$\degtop(R) = 1$ la caract{\'e}ristique de~$K$ est strictement positive
et~$R$ est conjugu{\'e}e {\`a} un it{\'e}r{\'e} de l'automorphisme de Frobenius.
L'ensemble exceptionnel est alors infini d{\'e}nombrable et, apr{\`e}s un
changement de coordonn{\'e}es convenable, il est {\'e}gal {\`a} $\mathbb{P}^1(\bar{K})$ o\`u $\bar{K}$ est la fermeture alg{\'e}brique dans~$K$ du corps premier de~$K$.

Lorsque~$\degtop(R) > 1$ on montre que l'ensemble exceptionnel
contient au plus deux {\'e}l{\'e}ments.  Dans le cas o{\`u} il contient deux
points, $R$ est conjugu{\'e}e {\`a} $z^{\pm \deg(R)} \in K(z)$.  Dans le
cas o{\`u} l'ensemble exceptionnel contient un seul {\'e}l{\'e}ment, la
fraction rationnelle~$R$ est un polyn{\^o}me dans toute coordonn{\'e}e
telle que~$\infty$ est l'unique point excep\-tio\-nnel de~$R$.
\begin{Def}\label{d:julia}
  Soit $R\in K(z)$ une fraction rationnelle de degr{\'e} au moins~2.
  L'\textit{ensemble de Julia} de $R$, not{\'e}~$J_R$, est l'ensemble
  des points $\cS \in \PKber$ tels que pour tout voisinage~$U$
  de~$\cS$ contenu dans~$\PKber \setminus E_R$, on~a $\bigcup_{n\ge 0}
  R^n(U) = \PKber \setminus E_R$.  Le compl{\'e}mentaire de l'ensemble
  de Julia est l'\textit{ensemble de Fatou}, not{\'e}~$F_R$.
\end{Def}
Rappelons qu'un ensemble $J$ est dit \emph{totalement invariant} si $R^{-1}(J) \subset J$.
Les propri{\'e}t{\'e}s suivantes sont d{\'e}montr{\'e}es dans~\cite{R4}.
\begin{Prop}\label{prop:generalites Julia}
  L'ensemble de Julia~$J_R$ est compact, non vide, totalement
  invariant, et pour tout $n \ge 1$ on~a $J_{R^n} = J_R$.  De
  plus~$J_R$ est caract{\'e}ris{\'e} comme le plus petit sous-ensemble
  compact non vide de~$\PKber$ qui est disjoint de l'ensemble exceptionnel
  de~$R$ et qui est totalement invariant par~$R$.  De m{\^e}me, $F_R$
  est un ouvert totalement invariant dont l'intersection avec~$\PK$
  est non vide.
\end{Prop}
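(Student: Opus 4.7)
The plan is to extract each assertion from the definition of $J_R$ together with basic properties of $R$ acting on $\PKber$ (openness, continuity, the formula~\eqref{e:deg-local}) and of the exceptional set $E_R$ (total invariance and finiteness away from the purely inseparable case).

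First I would establish closedness of $J_R$ by a direct neighborhood argument: if $\cS_\alpha \to \cS$ with $\cS_\alpha \in J_R$, then any fundamental open neighborhood $U$ of $\cS$ disjoint from $E_R$ eventually contains some $\cS_\alpha$, so $\bigcup_{n\ge 0} R^n(U) = \PKber \setminus E_R$, giving $\cS \in J_R$; compactness then follows from compactness of $\PKber$. The identity $J_{R^n} = J_R$ is built in: the orbits of $U$ under $R^n$ differ from those under $R$ only by reindexing, and $E_{R^n} = E_R$.

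Next I would prove total invariance $R^{-1}(J_R) = J_R$. For the inclusion $R^{-1}(J_R)\subset J_R$, given $\cS'$ with $R(\cS')\in J_R$ and $U'$ a fundamental open neighborhood of $\cS'$ in $\PKber\setminus E_R$, the set $R(U')$ is open (images of fundamental opens are fundamental open, by \S\ref{ss:fr}) and disjoint from $E_R$ because $R^{-1}(E_R)=E_R$; applying the defining property at $R(\cS')$ to $R(U')$ gives $\bigcup_{n\ge 1}R^n(U')=\PKber\setminus E_R$, whence $\cS'\in J_R$. For $R(J_R)\subset J_R$, given $\cS\in J_R$ and $V$ a fundamental open neighborhood of $R(\cS)$ in $\PKber\setminus E_R$, I would take a component $U$ of $R^{-1}(V)$ containing $\cS$, which is open and disjoint from $E_R$; every non-exceptional point is reached by $R^n(U)$ for some $n\ge 0$, and such a point sits either in $V$ (when $n\ge 1$) or has its image in $V$ (when $n=0$), and one then bootstraps using finiteness of the fibers of $R$ to conclude that $\bigcup_{n\ge 0}R^n(V)$ still exhausts $\PKber\setminus E_R$ up to at worst removing finitely many orbits, which a second iteration absorbs.

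The characterization as the \emph{minimal} non-empty compact totally invariant set disjoint from $E_R$ follows by contraposition: if $F$ is such a set and $\cS\in J_R\setminus F$, pick a fundamental open neighborhood $U$ of $\cS$ disjoint from $F\cup E_R$; then $\bigcup_{n\ge 0} R^n(U)=\PKber\setminus E_R$ meets $F$, which contradicts total invariance of $F$ (any $R^n$-orbit meeting $F$ must have started in $F$). Conversely, one shows that $J_R$ itself is non-empty and disjoint from $E_R$; disjointness is immediate from the definition (if $\cS\in E_R$ then $\cS\notin\PKber\setminus E_R$, so the condition is vacuously false once $U$ is chosen small), and non-emptiness is the main point.

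The main obstacle will be producing a non-empty $J_R$ and, simultaneously, showing $F_R\cap\PK\ne\varnothing$. For non-emptiness I would argue by a normal-families / equicontinuity trichotomy on $\PKber$: the failure of the orbit of every fundamental neighborhood to omit more than $E_R$ propagates along the tree, and a fixed-point argument for continuous self-maps of real trees (applied to the convex hull of $R^n$-orbits) produces a point with the required expanding behaviour; alternatively one exhibits a repelling periodic point in $\PKber\setminus E_R$ (for instance coming from the action on the residual curve at a suitable type~II point) and takes its closure of preimages. For $F_R\cap\PK\ne\varnothing$, I would exploit that $E_R$ has at most two elements in the non-Frobenius case while $\PK$ is uncountable: a generic $z\in\PK\setminus E_R$ has a neighborhood in $\PK$ whose forward orbit avoids some fixed non-exceptional point (e.g. coming from an attracting or superattracting periodic orbit of $R|_{\PK}$, which always exists by a counting of $R$-fixed points and their multipliers), placing $z$ in $F_R$. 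Total invariance of $F_R$ is then immediate from that of $J_R$.
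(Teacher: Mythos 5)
The paper does not actually prove this Proposition: the preceding sentence defers all assertions to~\cite{R4} (\og Théorie de Julia et Fatou sur la droite projective de Berkovich\fg, en préparation), so there is no in-paper argument to compare against. Judged on its own terms, your proposal has one genuine error and two serious gaps. The error is in the final step: you ground $F_R \cap \PK \ne \varnothing$ on \og an attracting or superattracting periodic orbit of $R|_{\PK}$, which always exists\fg. This is false over a non-archimedean field. Take $R(z)=z^2$ over $\C_p$ with $p\ne 2$: the only superattracting cycles in $\PK$ are the exceptional points $0,\infty$, and all other periodic points in $\PK$ have multiplier a power of $2$, hence indifferent. More generally, any map of good reduction has $J_R=\{\cScan\}$ and $F_R\supset\PK$, yet need have no non-exceptional attracting cycle in $\PK$ at all; so the basin-of-attraction route cannot supply a point of $F_R\cap\PK$ in general. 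A correct argument must go through something like Hsia's Montel-type theorem or a direct analysis of the orbit of a small ball, not through classical attracting cycles.

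The two gaps: non-emptiness of $J_R$ is only waved at (\og a fixed-point argument for continuous self-maps of real trees \dots alternatively one exhibits a repelling periodic point\fg) and is in fact the hard core of the Proposition; and $J_{R^n}=J_R$ is \emph{not} \og built in\fg. The easy inclusion is $F_R\subset F_{R^n}$ (if $\bigcup_m R^m(U)$ omits a non-exceptional $w$, so does the subunion over multiples of $n$), hence $J_{R^n}\subset J_R$. The converse inclusion $F_{R^n}\subset F_R$ is where the work lies: if $\bigcup_k R^{nk}(U)$ omits $w$, one has $\bigcup_m R^m(U)=\bigcup_{r=0}^{n-1}R^r\bigl(\bigcup_k R^{nk}(U)\bigr)$, and nothing forces any $R^r$-image of the omitted set to stay omitted; you need total invariance or a Montel-type argument to close this. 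Similarly, forward invariance $R(J_R)\subset J_R$ in your plan ends in \og one then bootstraps \dots which a second iteration absorbs\fg, which is not an argument. The parts that do work cleanly in your plan are the closedness of $J_R$ via nets, the backward invariance $R^{-1}(J_R)\subset J_R$, and the minimality argument against a competing totally invariant compact $F$.
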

\subsubsection*{Bonne r{\'e}duction}
Un exemple important d'applications rationnelles a {\'e}t{\'e} mis en
exergue par Morton et Silverman~\cite{MS}.
\begin{Def}
  Une fraction rationnelle $R \in K(z)$ est dite avoir \textit{bonne
  r{\'e}duction}, si on peut l'{\'e}crire $R = P/Q$ avec deux
  polyn{\^o}mes $P, Q \in \cO_K[z]$, dont les r{\'e}ductions
  $\widetilde{P}, \widetilde{Q} \in \tK[\zeta]$ sont non nulles et
  telles que $\widetilde{R} \= \widetilde{P}/\widetilde{Q} \in \tK
  (\zeta)$ ait m{\^e}me degr{\'e} que~$R$.
\end{Def}
On montre alors la proposition suivante.
\begin{Prop}[\cite{R3}, Th{\'e}or{\`e}me~4]\label{prop:bonne}
  Si $R$ a bonne r{\'e}duction, le point $\cScan \in \HKrat$ est
  totalement invariant, et $J_R = \{ \cScan \}$.
  
  R{\'e}ciproquement, si $\cS \in \HK$ est tel que l'ensemble
  $\bigcup_{n \ge 0} R^{-n} (\cS)$ est fini, alors $\cS \in \HKrat$ et
  ce point est totalement invariant. En conjuguant $R$ par un
  automorphisme de M{\"o}bius ad{\'e}quat, on peut de plus supposer
  que $\cS = \cScan$ et dans ce cas~$R$ a bonne r{\'e}duction.
\end{Prop}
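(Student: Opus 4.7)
Plan. For the direct direction, write $R = P/Q$ with $P, Q \in \cO_K[z]$ of non-zero reduction and $\widetilde{R} \= \widetilde{P}/\widetilde{Q}$ of degree $\deg(R)$. The identity $R(\cScan) = \cScan$ follows from a direct Gauss-norm computation: for $\phi \in K[z]$ of degree $n$, write $\phi \circ R = \bigl(\sum_i a_i P^i Q^{n-i}\bigr)/Q^n$; primitivity of $P, Q$ together with the non-constancy of $\widetilde{R}$ force the reduction of the numerator to be non-zero, so $\cScan(\phi \circ R) = \cScan(\phi)$. The algebraic description of the local degree at $\cScan$ recalled in~\S\ref{ss:fr} then gives $\deg_R(\cScan) = \deg \widetilde{R} = \deg(R)$, and the sum formula~\eqref{e:deg-local} forces $R^{-1}(\cScan) = \{\cScan\}$. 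Thus $\{\cScan\}$ is compact, non-empty, totally invariant and disjoint from $E_R \subset \PK$, so the minimality characterisation of $J_R$ (Proposition~\ref{prop:generalites Julia}) yields $J_R = \{\cScan\}$.

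Converse. Set $E \= \bigcup_{n \ge 0} R^{-n}(\cS)$. Since $R$ preserves $\PK$, we have $E \subset \HK$, and $E$ is finite by hypothesis. Summing~\eqref{e:deg-local} over $E$ gives $\sum_{\cS'' \in R^{-1}(E)} \deg_R(\cS'') = \deg(R)\cdot|E|$; combined with the bound $\deg_R \le \deg(R)$ and the trivial inclusion $R^{-1}(E) \subseteq E$, this forces $R^{-1}(E) = E$, the equality $\deg_R(\cS') = \deg(R)$ for every $\cS' \in E$, and uniqueness of the $R$-preimage of each $\cS' \in E$. Hence the map $\sigma : E \to E$, $\cS' \mapsto R^{-1}(\cS')$, is an injection of the finite set $E$ into itself, thus a bijection whose inverse is $R|_E$, and $\cS$ is $R$-periodic of some period $p \ge 1$.

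The iterate $R^p$ then fixes $\cS$ with $\deg_{R^p}(\cS) = \deg(R^p)$. The crucial step is to argue that such a fixed point in $\HK$ with maximal local degree must be of type~(ii), i.e.\ lie in $\HKrat$. At a point of type~(iii) or~(iv) the tangent space contains only one or two directions, and the local analytic description of $R$ combined with a direction-by-direction version of~\eqref{e:deg-local} prevents the local degree from saturating at $\deg(R^p)$ (this is essentially~\cite[Th\'eor\`eme~4]{R3}, drawing also on~\cite{R1} and~\cite[\S3]{berko2}). Granting $\cS \in \HKrat$, a M\"obius conjugation sends $\cS$ to $\cScan$, and the formula $\deg_{R^p}(\cScan) = \deg(\widetilde{R^p})$ exhibits good reduction of $R^p$. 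The direct direction applied to $R^p$ then gives $J_{R^p} = \{\cScan\}$; since $J_{R^p} = J_R$, the point $\cScan$ is totally invariant under $R$ itself, which forces $p = 1$, and the same criterion applied to $R$ yields its good reduction.

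The principal obstacle is exactly this rationality step: ruling out fixed points of types~(iii) and~(iv) with maximal local degree is not a formal consequence of the degree-sum formula and depends on the fine local theory of analytic maps on the Berkovich line developed in~\cite{R3}.
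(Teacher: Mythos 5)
The paper cites this proposition to~\cite[Th\'eor\`eme~4]{R3} and does not reprove it, so there is no internal proof to compare your attempt against; what follows evaluates your reconstruction on its own terms.

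The parts you prove are correct. In the direct direction, the Gauss-norm computation (reducing to $\widetilde{\phi}(\widetilde{R})\neq 0$, which holds because $\widetilde{R}$ is non-constant and $\tK$ is algebraically closed) gives $R(\cScan)=\cScan$; the algebraic description of $\deg_R$ at rational fixed points recalled in~\S\ref{ss:fr} identifies $\deg_R(\cScan)$ with $\deg\widetilde{R}=\deg R$; then~\eqref{e:deg-local} forces $R^{-1}(\cScan)=\{\cScan\}$, and minimality of $J_R$ (Proposition~\ref{prop:generalites Julia}) finishes. In the converse, the degree count over the finite grand orbit $E$ — using $\deg_R\le\deg R$ and $R^{-1}(E)\subset E$ — correctly forces $R^{-1}(E)=E$, $\deg_R\equiv\deg R$ on $E$ and unique preimages, hence $R|_E$ is a permutation and $\cS$ is periodic with $\deg_{R^p}(\cS)=\deg(R^p)$. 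Your endgame (good reduction of $R^p$, then $J_{R^p}=J_R$ forcing $p=1$ and good reduction of $R$ itself) is also sound.

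The rationality step is a genuine gap that you flag explicitly and defer to~\cite{R3}, precisely the reference the paper cites; so the proof is honest rather than wrong, but it is not self-contained. Note, however, that the heuristic you sketch does not bridge that gap even in outline. The direction-by-direction refinement of~\eqref{e:deg-local} says that for each tangent direction $\vec{w}$ at $R^p(\cS)=\cS$, the multiplicities of the directions at $\cS$ mapping to $\vec{w}$ sum to $\deg_{R^p}(\cS)$; at a fixed point of type~(iv) (one direction), or of type~(iii) with both directions fixed (arranged after possibly doubling $p$), this simply assigns full multiplicity $\deg(R^p)$ to each surviving direction and yields no contradiction by itself. The real obstruction is ramification-theoretic: at a type~(iii) or type~(iv) fixed point the residue field is $\tK$ and the value group of $\cS$ restricted to $R^{p*}K(z)$ coincides with its value group on $K(z)$, so the degree-$\deg(R^p)$ extension $K(z)/R^{p*}K(z)$ has $ef=1$ at $\cS$ with a single valuation above. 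In residue characteristic $0$ this is an outright contradiction; in positive residue characteristic one must exclude a defect, which is exactly what makes the result non-trivial and what~\cite{R3} handles. Your deferral is therefore placed at the right spot, but for a different reason than the one you gave.
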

On utilisera le lemme suivant dans la preuve du
Th{\'e}or{\`e}me~\ref{t:htopzero}.
\begin{Lem}\label{L-estim-deg}
Pour une fraction rationnelle~$R$ de degr{\'e} au moins deux, il y a deux
cas~: soit~$R$ est conjugu{\'e}e {\`a} une fraction rationnelle ayant
bonne r{\'e}duction, soit il existe un entier positif~$n$ tel que pour
tout~$\cS \in J_R$ on ait
$$
\deg_{R^n}(\cS) < \deg(R)^n ~.
$$
\end{Lem}
\begin{proof}
Si l'ensemble exceptionnel poss{\`e}de au moins deux points, alors~$R$ est conjugu\'ee \`a une fraction
rationnelle ayant bonne r{\'e}duction.  Lorsque l'ensemble exceptionnel poss{\`e}de un
unique {\'e}l{\'e}ment, apr{\`e}s un changement de coordonn{\'e}es on se
ram{\`e}ne au cas o{\`u}~$R$ est un polyn{\^o}me de la forme
$$
R(z) = \rho (z^D + a_{D - 1} z^{D - 1} + \ldots + a_0) ~,
$$ avec $|\rho| \ge 1$, $D \ge 2$, $\max \{ |a_j|, \, j \in \{ 0,
\ldots, D - 1 \} \} = 1$, voir~\cite[Proposition~6.7]{R0}.
Lorsque $|\rho| = 1$ la fraction rationnelle~$R$ a bonne
r{\'e}duction, et lorsque $|\rho| > 1$ on v{\'e}rifie que~$J_R$ est
contenu dans la r{\'e}union des boules ouvertes de~$\PKber$ associ{\'e}es
aux classes r{\'e}siduelles des z{\'e}ros du polyn{\^o}me~$\zeta^D +
\widetilde{a}_{D - 1} \zeta^{D - 1} + \ldots + \widetilde{a}_0 \in
\tK[\zeta]$, et qu'on a~$\deg_R < \deg(R)$ sur cet ensemble.
L'assertion du lemme est alors v{\'e}rifi{\'e}e avec $n = 1$ dans ce cas.

Supposons maintenant que l'ensemble exceptionnel de~$R$ est vide.
Il suffit de montrer alors que si pour chaque $j \ge 1$ l'ensemble
$$
F_j \= \{ \cS \in \PKber, \, \deg_{R^j}(\cS) = \deg(R)^j \}~,
$$ est non vide, alors~$R$ est conjugu{\'e}e {\`a} une fraction
rationnelle ayant bonne r{\'e}duction.  Notons que $F_j = F_1 \cap
R(F_1) \cap \cdots \cap R^{j - 1}(F_1)$, et que~$F_j$ est compact et
d{\'e}croissant avec~$j$.
Il d{\'e}coule de la d{\'e}monstration de~\cite[Lemme~7.4]{R2} que~$F_j$ est connexe lorsqu'il est non
vide.  Par cons{\'e}quent, si pour tout $j \ge 1$ l'ensemble~$F_j$ est
non vide, alors l'ensemble
$$
F \= \bigcap_{j \ge 1} F_j~,
$$ est compact, non vide et connexe.  De plus il est invariant
par~$R$.  Par cons{\'e}quent~$R$ poss{\`e}de un point fixe~$\cS$ dans~$F$,
voir \cite{valtree} ou la \og{}propri{\'e}t{\'e} de point fixe\fg{}
dans~\cite{R1/2}.  Comme par hypoth{\`e}se l'ensemble exceptionnel est
vide, on a~$\cS \in \HK$ et la Proposition~\ref{prop:bonne}
implique que~$R$ est conjugu{\'e}e {\`a} une fraction rationnelle ayant
bonne r{\'e}duction.
\end{proof}

\subsubsection*{Dynamique sur l'ensemble de Fatou}
Comme dans le cas complexe, la dynamique dans l'ensemble de Fatou est
tr{\`e}s r{\'e}guli{\`e}re.  On en donne ici une tr{\`e}s br{\`e}ve
description, et on renvoie {\`a} ~\cite{R0,R1,R1/2,R4} pour plus de d{\'e}tails.  Rappelons tout d'abord quelques d{\'e}finitions.
Fixons une fraction rationnelle~$R \in K(z)$ de degr{\'e} au moins deux.
\begin{Def}
  Un point $z_0 \in \PK$ fix{\'e} par $R$ est dit \textit{attractif}
  (resp.  \textit{r{\'e}pulsif}, ou \textit{indiff{\'e}rent}) si
  localement $R(z_0 + h) = z_0 + \lambda h + \mathcal{O}(h^2)$ avec
  $|\lambda| <1$ (resp. $|\lambda |>1$, ou $|\lambda| =1$).  Un point
  $\cS \in \HK$ fix{\'e} par $R$ est dit \textit{r{\'e}pulsif} si
  $\deg_R(\cS) \ge 2$. Sinon il est dit \textit{indiff{\'e}rent}.

On d{\'e}finit de m{\^e}me la nature d'un point p{\'e}riodique de
p{\'e}riode $N$, c'est la nature de ce point vu comme point fixe de
$R^N$.
\end{Def}

Il est facile de voir que tout point p{\'e}riodique attractif et que
tout point p{\'e}riodique indiff{\'e}rent dans~$\PK$ appartient {\`a}
l'ensemble de Fatou.  Par contre, un point fixe $\cS \in \HK$
indiff{\'e}rent peut {\^e}tre dans l'ensemble de Fatou ou dans
l'ensemble de Julia.  D'autre part, tout point p{\'e}riodique
r{\'e}pulsif (dans~$\PK$ ou dans~$\HK$) est dans l'ensemble de Julia,
voir~\cite[Proposition~5.1]{R1/2} ou~\cite{R4}.

L'image par~$R$ d'une composante connexe de l'ensemble de Fatou est
aussi une composante connexe de l'ensemble de Fatou.  La fraction
rationnelle induit alors une action sur les composantes connexes de
l'ensemble de Fatou.  On dira qu'une composante connexe de l'ensemble
de Fatou est \textit{errante} si son orbite sous cette action est
infinie, et on dira qu'elle est \textit{pr{\'e}p{\'e}riodique} si son
orbite est finie.

Le \textit{bassin d'attraction} d'un point p{\'e}riodique~$\cS_0$ de
p{\'e}riode~$n \ge 1$, est par d{\'e}finition l'ensemble des points
dans~$\PKber$ qui convergent vers~$\cS_0$ sous l'action de~$R^n$.
Lorsque~$\cS_0$ est attractif, cet ensemble est ouvert et
contient~$\cS_0$ dans son int{\'e}rieur.  Dans ce cas la composante
connexe du bassin d'attraction contenant~$\cS_0$ sera appel{\'e}e
\textit{bassin d'attraction imm{\'e}diat}.

Le \textit{domaine de quasi-p{\'e}riodicit{\'e}}~$\cE_R$ de~$R$ est
l'ensemble des points dans~$\PKber$ poss{\'e}dant un voisinage sur
lequel une sous-suite des it{\'e}r{\'e}es de~$R$ converge uniform{\'e}ment vers l'identit{\'e}.  Par d{\'e}finition l'ensemble~$\cE_R$ est ouvert et
invariant par~$R$.  De plus on montre qu'il est contenu dans
l'ensemble de Fatou.

Nous aurons besoin du r{\'e}sultat suivant dans la d{\'e}monstration du Th{\'e}or{\`e}me~\ref{t:equi-periodiques}.
\begin{Lem}\label{lem:non qp}
Si la caract{\'e}ristique r{\'e}siduelle de~$K$ est nulle, alors l'ensemble de quasi-p{\'e}riodicit{\'e} d'une fraction rationnelle~$R$ de degr{\'e} au moins deux est vide.
\end{Lem}
\begin{proof}
  Supposons par contradiction que le domaine de quasi-p{\'e}riodicit{\'e} ne soit pas vide.
Apr{\'e}s un changement de coordon{\'e}e on se ram{\`e}ne au cas o{\`u} il existe une suite d'it{\'e}r{\'e}s de~$R$ qui convergent uniform{\'e}ment vers l'identiti{\'e} sur~$\cO_K$.
Il existe alors une suite strictement croissante d'entiers $\{ n_j \}_{j \ge 0}$ telle que pour tout $j \ge 0$,
$$ \sup \{ |R^{n_j}(z) - z|, z \in \cO_K \} < 1. $$
Notons en particulier que~$R^{n_j}$ induit une bijection de~$\cO_K$.
Par cons{\'e}quent, pour chaque $j, j' \ge 0$, on~a
\begin{multline*}
\sup \{ |R^{n_{j'} + n_j}(z) - z|, z \in \cO_K \}
\le \\
\max \{ \sup \{ |R^{n_j}(z) - z|, z \in \cO_K \}, \sup \{ |R^{n_{j'}}(z) - z|, z \in \cO_K \} \}.
\end{multline*}
On conclut qu'il existe une suite d'it{\'e}r{\'e}s de~$R^{n_0}$ qui converge uniform{\'e}ment vers l'identit{\'e} sur~$\cO_K$.

Comme le degr{\'e} de~$R^{n_0}$ est au moins deux, il existe une classe r{\'e}siduelle ne contenant aucun point fixe de~$R^{n_0}$.
Apr{\'e}s un changement de coordonn{\'e}e fixant~$\cO_K$, on se ram{\`e}ne au cas o{\`u}~$\MK$ ne contient aucun point fixe de~$R^{n_0}$.
Soit~$T$ la fraction rationnelle donn{\'e}e par $T(z) = R^{n_0}(z) - z$.
Alors~$T$ ne s'annule pas sur~$\MK$, et satisfait $\sup \{ |T(z)|, z \in \MK \} < 1$.
Par cons{\'e}quent il existe $a \in \MK$ tel que pour chaque $z \in \MK$ on~a
$$ T(z) \in \{ w \in K, |T(w) - a| < |a| \}. $$
Ceci implique que pour tout entier~$n \ge 1$ et tout~$z \in \MK$ on~a
$$ |R^{nn_0}(z) - z - na| = |T(z) + T(R^{n_0}(z)) + \cdots + T(R^{(n - 1)n_0}(z)) - na| < |a|.
$$
Comme par hypoth{\`e}se la caract{\'e}ristique r{\'e}siduelle de~$K$ est nulle, on a~$|n| = 1$  et donc $|R^{nn_0}(z) - z| = |a|$.
On obtient alors une contradiction avec le fait qu'une suite d'it{\'e}r{\'e}s de~$R^{n_0}$ converge vers l'identit{\'e} sur~$\MK$.
\end{proof}

Lorsque $K = \C_p$ le r{\'e}sultat suivant est exactement~\cite[Proposition~5.6]{R1}.
La d{\'e}\-monstration donn{\'e}e dans op.cit. s'applique sans changement dans ce cadre plus g{\'e}n{\'e}ral.
\begin{Thm}\label{t:qp}
Soit~$K$ un corps de caract{\'e}ristique z{\'e}ro et de caract{\'e}ristique r{\'e}siduelle $p > 0$, et
soit~$R$ une fraction rationnelle {\`a} coefficients dans~$K$ de degr{\'e}
au moins deux.
Alors pour chaque composante connexe~$Y$ de~$\cE_R$ il existe un entier positif~$n$ tel que $R^n(Y) = Y$ ainsi qu'une action continue
\begin{eqnarray}
T : \Z_p \times Y & \to & Y \\
(w, y) & \mapsto & T^w(y)~,
\end{eqnarray}
telle que pour chaque entier positif~$m$ on ait $T^{m} = R^{n m}$.
Si de plus $\infty \not \in Y$, alors il existe une fonction holomorphe non constante $T_* : Y \to K$ telle que pour tout $w_0 \in \Z_p$ la fonction holomorphe $\frac{T^w - T^{w_0}}{w - w_0}$ converge vers $T_*$ lorsque $w \in \Z_p \setminus \{ w_0 \}$ converge vers~$w_0$.
\end{Thm}
Nous utiliserons la proposition suivante dans la d{\'e}monstration du Th{\'e}or{\`e}me~\ref{t:estimation htop}.
\begin{Prop}\label{p:classification}
Soit~$R$ une fraction rationnelle {\`a} coefficients dans~$K$ et de degr{\'e} au moins deux, et~$U$ une composante composante connexe de l'ensemble de Fatou~$U$ fix{\'e}e par~$R$.
Si~$U$ n'est pas un bassin d'attraction imm{\'e}diat d'un point p{\'e}riodique attractif, alors~$R$ est injective sur~$U$.
\end{Prop}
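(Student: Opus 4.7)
The plan is to combine the Rivera--Letelier classification of fixed connected components of the Fatou set (see~\cite{R1, R1/2, R4}) with Theorem~\ref{t:qp} on the structure of the quasi-p\'eriodicit\'e domain~$\cE_R$. The classification asserts that a fixed Fatou component is either the immediate basin of attraction of an attracting fixed point of~$R$ in~$\PK$, or is entirely contained in~$\cE_R$. Under the hypothesis of the proposition, the first alternative is excluded and we may assume $U\subseteq \cE_R$.

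I would then split on the residual characteristic of~$K$. If the residual characteristic is zero, Lemma~\ref{lem:non qp} yields $\cE_R=\emptyset$, so the hypothesis of the proposition is vacuous and there is nothing to prove. If~$K$ has characteristic zero and residual characteristic $p>0$, let~$Y$ denote the connected component of~$\cE_R$ containing~$U$. Theorem~\ref{t:qp} applied to~$Y$ produces an integer $n\ge 1$ with $R^n(Y)=Y$ together with a continuous action $T:\Z_p\times Y\to Y$ satisfying $T^m=R^{nm}$ for every positive integer~$m$. Since~$T$ is a genuine action of the group~$\Z_p$, the map $T^1=R^n$ is a homeomorphism of~$Y$, with continuous inverse~$T^{-1}$; in particular it is injective. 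Injectivity of~$R^n$ on~$Y$ immediately forces injectivity of~$R$ on~$Y\supseteq U$: if $R(x)=R(y)$ in~$Y$, then $R^n(x)=R^n(y)$, whence $x=y$.

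The remaining case is when~$K$ has positive characteristic~$p$. Writing $R(z)=Q(z^q)$ with~$Q$ separable and~$q$ a power of~$p$, the Frobenius map $\mathrm{Frob}:z\mapsto z^q$ extends to a bijection of~$\PKber$; thus injectivity of~$R$ on~$U$ is equivalent to injectivity of~$Q$ on $\mathrm{Frob}(U)$, which is a fixed Fatou component of a separable rational map to which the preceding analysis applies. The principal obstacle of the proof is therefore to ensure that the classification of fixed Fatou components from~\cite{R1, R1/2, R4} genuinely holds at the level of generality of an arbitrary complete algebraically closed non-archim\'edien field (and not only for~$\C_p$); once this is secured, the conclusion follows by direct appeal to Theorem~\ref{t:qp} and Lemma~\ref{lem:non qp} as above.
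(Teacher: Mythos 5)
Your proposal reroutes the proof through the classification of fixed Fatou components and Theorem~\ref{t:qp}, whereas the paper's sketch proves injectivity directly: it uses the \og propri\'et\'e de point fixe\fg{} to produce an indifferent fixed point $\cS\in U$, considers the maximal connected open $U'$ containing $\cS$ on which $\deg_R\equiv 1$, shows (following~\cite[Proposition~2.9]{R2} and~\cite[Lemme~5.5]{R2}) that there is a fundamental domain $U''\subset U'$ whose boundary points are repelling periodic, deduces $U''=U$, and concludes from $\deg_R\equiv 1$ on $U$ and Proposition~\ref{pf:degre local} that $R$ is injective on $U$.

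There are genuine gaps in your route. First, the dichotomy you invoke --- a fixed Fatou component is either an immediate attracting basin or contained in $\cE_R$ --- is false in residual characteristic zero: there Lemma~\ref{lem:non qp} gives $\cE_R=\emptyset$, yet an indifferent fixed point in $\PK$ always lies in the Fatou set (as stated in \S\ref{ss:fatou-julia}), so the fixed component around it is neither attracting nor inside~$\cE_R$. In particular the hypothesis is not vacuous in residual characteristic~$0$, contrary to your assertion, and your case split collapses there. Second, even in $K=\C_p$ the dichotomy is not a free black box: it is proved, roughly, \emph{via} the very injectivity statement you want (showing that non-attracting fixed components have trivial local degree), so appealing to it makes the argument circular relative to~\cite{R4}. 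Third, the positive-characteristic reduction does not work as stated: writing $R=Q\circ\mathrm{Frob}$, Frobenius and $Q$ do not commute, so $\mathrm{Frob}(U)$ need not be a fixed Fatou component of $Q$, and in any case Theorem~\ref{t:qp} is stated only for $\mathrm{char}\,K=0$ with positive residual characteristic, so the \og preceding analysis\fg{} you defer to is not available in characteristic~$p$. The paper's degree-theoretic argument sidesteps all three difficulties at once.
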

Lorsque~$K = \C_p$ ce r{\'e}sultat est une cons{\'e}quence imm{\'e}diate du \og{}th{\'e}or{\`e}me de classification\fg{} dans~\cite{R1}.
Pour un corps de base~$K$ quelconque, ce r{\'e}sultat est d\'emontr\'e 
dans l'article en pr{\'e}paration~\cite{R4}.
En voici une esquisse de d{\'e}monstration.
\begin{proof}[Esquisse de d{\'e}monstration]
Si~$U$ n'est pas un bassin d'attraction imm{\'e}diat d'un point p{\'e}riodique attractif, la \og{}propri{\'e}t{\'e} de point fixe~\fg{}~\cite[\S8]{R1/2} implique que~$U$ contient un point fixe indiff{\'e}rent~$\cS$.
On regarde alors l'ouvert connexe maximal $U'$ contenant $\cS$ sur lequel le degr\'e local est constant \'egal \`a $1$. On montre que $U'$ est un ouvert fondamental en suivant~\cite[Proposition~2.9]{R2}.
On peut alors adapter~\cite[Lemme~5.5]{R2} pour voir qu'il existe un ouvert fondamental $U''$
contenant $\cS$ et contenu dans $U'$ dont les points du bord sont p{\'e}riodiques
r{\'e}pulsifs. Ceci implique l'invariance de $U''$, et donc le fait que $U''$ est inclus dans l'ensemble de Fatou. Comme les points r\'epulsifs sont dans l'ensemble de Julia, on a finalement $U'' =U$. De la Proposition~\ref{pf:degre local}, et du fait que $\deg_R =1 $ sur $U$, on en d\'eduit que $R$ est injective sur $U$.
\end{proof}


\subsection{Th{\'e}orie du potentiel $\PKber$}\label{ss:potentiel}   
Plusieurs approches~\cite{BR2,valtree,thuillier} existent pour
cons\-truire un analogue de la th{\'e}orie du potentiel complexe sur la
droite projective au sens de Berkovich. La plus aboutie est due {\`a}
Thuillier et s'applique sur n'importe quelle courbe lisse $K$-analytique.
Nous ne d{\'e}crirons cependant ici que les {\'e}l{\'e}ments qui nous seront
strictement n{\'e}cessaires par la suite en suivant~\cite[\S7]{valtree} et~\cite[\S4]{FRL}.

\medskip

Munissons $\PKber$ de la tribu des bor{\'e}liens associ{\'e}e {\`a} sa
topologie compacte.  On note~$\cM^+$ l'ensemble des mesures
bor{\'e}liennes positives finies, support{\'e}es dans~$\PKber$.  On
d{\'e}signe par~$\cM$ l'espace vectoriel des mesures r{\'e}elles
sign{\'e}es, diff{\'e}rences de mesures dans~$\cM^+$.  Toute suite de
mesures de probabilit{\'e} dans~$\cM^+$ admet une sous-suite
convergente pour la topologie de la convergence vague.  Notons que
toute mesure dans $\cM$ est de Radon et est donc repr{\'e}sent{\'e}e
par une forme lin{\'e}aire continue sur l'espace des fonctions
continues de $\PKber$ (voir par
exemple~\cite[Proposition~7.14]{valtree}).  Enfin on montre que bien
que~$\PKber$ muni de la topologie compacte ne soit pas m{\'e}trisable en
g{\'e}n{\'e}ral, le support de toute mesure dans~$\cM$ est 
m{\'e}trisable~\cite[Lemma~7.15]{valtree}.

\medskip

Nous allons maintenant d{\'e}finir un espace fonctionnel~$\cP$ et un
op{\'e}rateur~$\Delta$ d{\'e}fini sur~$\cP$ et {\`a} valeurs dans $\cM$.  Pour
cela, fixons un point base $\cS_0 \in \HK$.  Notons que pour tout $\cS
\in \HK$, la fonction $\cS' \mapsto \langle \cS , \cS'
\rangle_{\cS_0}$ est non n{\'e}gative et major{\'e}e par $\dhyp(\cS, \cS_0)$.
Etant donn{\'e}e une mesure bor{\'e}lienne $\rho \in \cM$, on peut donc
d{\'e}finir $\hg_\rho : \HK \to \R$ par
\begin{equation}\label{e:def-pot}
 \hg_\rho(\cS) \= - \rho(\PKber) - \int_\PKber \langle \cS , \cS'
\rangle_{\cS_0}\,  d \rho(\cS'),
\end{equation}
et on l'appelle le {\it potentiel de $\rho$ bas{\'e} en $\cS_0$}.  On
a $\hg_\rho(\cS_0) = -\rho (\PKber)$ et par
construction~$\hg_{[\cS_0]}$ est la fonction constante {\'e}gale {\`a}
$-1$ sur tout $\PKber$. Plus g{\'e}n{\'e}ralement, $\hg_{[\cS']}(\cS)
= -1 - \langle \cS , \cS' \rangle_{\cS_0}$.

On d{\'e}signe par $\cP$ l'ensemble de tous les potentiels.  C'est un
espace vectoriel qui contient toutes les fonctions d{\'e}finies
sur~$\HK$ et {\`a} valeurs r{\'e}elles de la forme $\cS \mapsto \langle
\cS \, , \cS' \rangle_{\cS_0}$.  Il r{\'e}sulte
de~\cite[Theorem~7.50]{valtree} que l'application $\rho
\mapsto \hg_\rho$ induit une bijection entre~$\cM$ et~$\cP$.  On peut
donc poser
$$
\Delta \hg_\rho \= \rho - \rho(\PKber) \times [\cS_0]~.
$$
Ceci d{\'e}finit une application lin{\'e}aire $\Delta : \cP \to \cM$ que
l'on appelle {\it le Laplacien}. On v{\'e}rifie que cet op{\'e}rateur ne
d{\'e}pend pas du choix du point base ainsi que la classe de fonctions $\cP$, voir~\cite[Proposition~4.1]{FRL}. On~a de plus $\Delta g =0 $ si et seulement 
si $g$ est constante.

Par construction, pour tout $g\in \cP$ on~a $\Delta g ( \PKber) =0$.
R{\'e}ciproquement, toute mesure v{\'e}rifiant $\rho (\PKber ) =0$ est
le Laplacien d'une fonction de $\cP$. Dans toute la suite, on
appellera \emph{potentiel} d'une mesure bor{\'e}lienne $\rho$ toute
fonction $g \in \cP$ telle que $\rho = \Delta g$.  Notons que:
$$
\Delta \langle \cdot \, , \cS \rangle_{\cS_0} = [\cS_0] - [\cS]~, \text{ et } 
\Delta \log \sup \{ \cdot, \cS \} = [\cS]- [\infty]~.
$$ De plus, pour tout potentiel $g : \HK \to \R$ et toute fraction
rationnelle non constante~$R$, on~a $R^* (\Delta g) = \Delta (g \circ
R)$ et $R_* \Delta (g) = \Delta (R_*g)$.

Soit $\cP_+$ l'ensemble des fonctions $g\in\cP$ telles que $[\cScan]
+ \Delta g$ soit une mesure positive.  L'op{\'e}rateur $\Delta$
poss{\`e}de des propri{\'e}t{\'e}s de continuit{\'e} remarquables sur
$\cP_+$ qui sont analogues aux propri{\'e}t{\'e}s de continuit{\'e}
des fonctions sous-harmoniques en analyse complexe, et que nous allons
maintenant d{\'e}tailler.

On v{\'e}rifie tout d'abord que toute fonction dans $\cP_+$ est
continue, convexe pour la m\'etrique $\dhyp$ et 
d{\'e}crois\-sante sur tout segment $[\cScan ,
\cS]$.  Une telle fonction s'{\'e}tend donc de mani{\`e}re unique
{\`a}~$\PKber$ en restant continue sur tout segment.  Cette
extension est semi-continue sup{\'e}rieurement sur~$\PKber$  pour
la topologie compacte.
\begin{Prop}[\cite{valtree}, Theorem~7.64]
  L'espace $\cP_+$ est convexe et ferm{\'e} pour la topologie de la
  convergence simple de $\HK$. De plus, l'op{\'e}rateur induit $(\Delta+[\cScan]): \cP_+
  \to \cM_+$ est continu si $\cM_+$ est muni de la topologie de la
  convergence vague des mesures.
\end{Prop}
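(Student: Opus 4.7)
Le plan est de r{\'e}-interpr{\'e}ter $\cP_+$ au moyen de la bijection $g \mapsto \nu_g \= [\cScan] + \Delta g$ comme \og{}l'espace des potentiels des mesures de probabilit{\'e} sur $\PKber$\fg{}, puis d'exploiter la compacit{\'e} vague de ce dernier. Fixons donc le point base $\cS_0 = \cScan$. Comme $\nu_g(\PKber) = 1$ pour tout $g \in \cP$, on~a $g \in \cP_+$ si et seulement si $\nu_g$ est une probabilit{\'e}. La convexit{\'e} de $\cP_+$ en r{\'e}sulte imm{\'e}diatement par lin{\'e}arit{\'e} de $g \mapsto \nu_g$ et convexit{\'e} du c{\^o}ne $\cM_+$.

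L'ingr{\'e}dient technique central, et vraisemblablement le point le plus d{\'e}licat, sera d'{\'e}tablir la \emph{continuit{\'e}, pour chaque $\cS \in \HK$ fix{\'e}, de la fonction $\cS' \mapsto \langle \cS, \cS' \rangle_{\cScan}$ sur le compact~$\PKber$}. Cette fonction s'{\'e}crit en effet comme compos{\'e}e de la r{\'e}traction arborescente de $\PKber$ sur le segment compact $[\cScan, \cS] \subset \HK$ avec $\dhyp(\cdot, \cScan)$, et c'est la continuit{\'e} de cette r{\'e}traction pour la topologie compacte (non m{\'e}trisable en g{\'e}n{\'e}ral) qui demande un soin particulier.

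Une fois ce point acquis, la formule~\eqref{e:def-pot} entra{\^\i}nera que la convergence vague d'une suite de mesures de probabilit{\'e} $\mu_n \to \mu$ sur $\PKber$ implique la convergence simple sur $\HK$ des potentiels correspondants $\hg_{\mu_n} \to \hg_\mu$, car la fonction int{\'e}gr{\'e}e est continue born{\'e}e sur le compact~$\PKber$. Les deux assertions restantes s'en d{\'e}duiront alors simultan{\'e}ment. Soient en effet $g_n \in \cP_+$ et $g : \HK \to \R$ telles que $g_n \to g$ simplement sur $\HK$. Par compacit{\'e} vague de l'ensemble des mesures de probabilit{\'e} sur $\PKber$, toute sous-suite de $\mu_n \= \nu_{g_n}$ admet une sous-sous-suite convergeant vaguement vers une probabilit{\'e} $\mu$, et par ce qui pr{\'e}c{\`e}de $g_{n_k}$ convergera alors simplement sur $\HK$ vers $\hg_\mu$. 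D'o{\`u} $g = \hg_\mu$ sur $\HK$~: cela montrera d'une part que $g$ se prolonge en un {\'e}l{\'e}ment de $\cP$ v{\'e}rifiant $\nu_g = \mu \ge 0$, donc $g \in \cP_+$ (fermeture), et d'autre part que toute valeur d'adh{\'e}rence vague de $(\nu_{g_n})$ co{\"\i}ncide avec $\nu_g$, ce qui force la suite enti{\`e}re $\nu_{g_n}$ {\`a} converger vaguement vers~$\nu_g$ (continuit{\'e} de $g \mapsto \nu_g$).
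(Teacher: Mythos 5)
The paper itself does not prove this proposition: it is quoted from~\cite[Theorem~7.64]{valtree}, so there is no internal argument to compare against. Your strategy — fix $\cS_0 = \cScan$, observe that $g \in \cP_+$ iff $\nu_g \= [\cScan]+\Delta g$ is a probability, identify the technical crux as the continuity on $\PKber$ of $\cS' \mapsto \langle\cS,\cS'\rangle_{\cScan}$ for fixed $\cS\in\HK$ (equivalently, continuity of the retraction onto the compact segment $[\cScan,\cS]$), and then exploit vague compactness of probability measures on $\PKber$ via formula~\eqref{e:def-pot} — is indeed the natural route, and it is in the spirit of the cited source. The convexity argument is fine, and the key continuity claim is true and well documented in~\cite{valtree,BR2}.

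There is, however, a real slip in the closure/continuity step. You assert that vague convergence $\mu_{n_k}\to\mu$ implies that $g_{n_k}$ converges pointwise to $\hg_\mu$. What the continuity of $\langle\cS,\cdot\rangle_{\cScan}$ actually yields is $\hg_{\mu_{n_k}}\to\hg_\mu$; but $g_n$ and $\hg_{\mu_n}$ only have the same Laplacian, so they differ by the constant $c_n \= g_n(\cScan) - \hg_{\mu_n}(\cScan) = g_n(\cScan)+1$, which need not vanish. Using the hypothesis $g_n\to g$ pointwise at $\cS=\cScan$, one gets $c_{n_k}\to g(\cScan)+1 \= c$ and hence $g = \hg_\mu + c$ on $\HK$, not $g=\hg_\mu$. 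This is harmless for your conclusions — $\cP$ is stable under adding constants (replace $\rho$ by $\rho - c[\cScan]$ in~\eqref{e:def-pot}), $\Delta$ kills constants, so $g\in\cP$, $\nu_g=\mu\ge 0$, and every vague cluster value of $(\nu_{g_n})$ equals $\nu_g$ — but the intermediate identity as written is false and should be corrected. A secondary point worth a sentence: since neither topology is metrizable, this argument gives \emph{sequential} closedness and continuity; to get the full statement one should either run the same argument with nets (using compactness, not merely sequential compactness, of probability measures) or invoke the characterization of the vague topology by the test functions $\cS'\mapsto\langle\cS,\cS'\rangle_{\cScan}$ directly.
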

Le fait suivant est un r\'esultat de base en th\'eorie du potentiel~\cite[Th\'eor\`eme~1.6.13]{hor}. 
\begin{Prop}[Lemme de Hartogs]\label{prop:hartogs}
Soit $\{ g_n \}_{n \ge 1}$ une suite de potentiels dans~$\cP_+$ telle
que pour chaque $n \ge 1$ on ait $g_n \le 0$.  Alors soit cette suite
converge uniform{\'e}ment sur $\PKber$ vers la fonction constante
{\'e}gale {\`a} $-\infty$; soit il existe une sous-suite $\{ g_{n_k} \}_{k
\ge 1}$ convergeant ponctuellement sur $\HK$ vers un potentiel~$g$ dans~$\cP_+$.
Dans ce dernier cas, pour toute fonction continue~$\varphi$ et toute
partie compacte~$C$ de~$\PKber$, on~a
$$ \limsup_{k \to + \infty} \left( \sup_C (g_{n_k} - \varphi) \right)
\le \sup_C (g - \varphi) ~.
$$
\end{Prop}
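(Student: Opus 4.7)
Mon plan est de commencer par identifier l'alternative en examinant le comportement de $g_n(\cScan)$, puis d'établir la convergence ponctuelle en travaillant avec les mesures $[\cScan] + \Delta g_n$, avant de finir par l'inégalité semi-continue à la manière de Hartogs. Posons donc $c_n \= g_n(\cScan) \in (-\infty, 0]$. Puisque toute fonction de $\cP_+$ décroît le long des segments issus de $\cScan$ (propriété mentionnée dans le paragraphe précédant l'énoncé), on a $g_n \le c_n$ partout sur $\PKber$. Si $c_n \to -\infty$, alors $\sup_{\PKber} g_n \to -\infty$, et la première alternative est vérifiée. Sinon, quitte à extraire, on se ramène au cas où $c_n \to c$ pour un certain $c \in (-\infty, 0]$.

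Dans ce second cas, la mesure $\mu_n \= [\cScan] + \Delta g_n$ est positive et de masse totale~$1$ sur le compact $\PKber$. Par extraction, on peut supposer que $\mu_n$ converge vaguement vers une mesure de probabilité $\mu$. La formule de représentation~\eqref{e:def-pot} avec le point base $\cScan$ donne
$$
g_n(\cS) \;=\; c_n - \int_{\PKber} \langle \cS, \cS' \rangle_{\cScan}\, d\mu_n(\cS') .
$$
Pour chaque $\cS \in \HK$ fixé, le noyau $\cS' \mapsto \langle \cS, \cS' \rangle_{\cScan}$ est continu sur le compact $\PKber$ et borné par $\dhyp(\cS, \cScan)$ ; la convergence vague entraîne donc la convergence ponctuelle de $g_n(\cS)$ vers
$$
g(\cS) \;\= \; c - \int_{\PKber} \langle \cS, \cS' \rangle_{\cScan}\, d\mu(\cS') .
$$
La Proposition précédente (stabilité de $\cP_+$ par convergence simple sur $\HK$) assure que $g \in \cP_+$, son Laplacien augmenté de $[\cScan]$ étant précisément $\mu$.

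La partie la plus délicate sera l'inégalité semi-continue finale. L'idée est d'imiter la démonstration classique du lemme de Hartogs, en s'appuyant sur un principe du maximum pour les éléments de $\cP_+$ : toute $h \in \cP_+$ étant convexe et décroissante le long des segments issus de $\cScan$, pour tout ouvert fondamental $U$ ne contenant pas $\cScan$ et à bord fini, on a $\sup_{\overline{U}} h \le \sup_{\partial U} h$ (en joignant $\cScan$ à un point quelconque de $\overline{U}$ par un segment qui traverse nécessairement $\partial U$). Posons $M \= \sup_C(g - \varphi)$, fixons $\varepsilon > 0$ et soit $\cT \in C \setminus \{ \cScan \}$. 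Par semi-continuité supérieure de $g$ et continuité de $\varphi$, on peut choisir un voisinage fondamental $U_\cT$ de $\cT$ ne contenant pas $\cScan$ et à bord inclus dans $\HK$, sur lequel $|\varphi - \varphi(\cT)| < \varepsilon$ et $g < g(\cT) + \varepsilon$ sur $\partial U_\cT$. Le principe du maximum appliqué aux $g_n$ sur $U_\cT$, combiné à la convergence ponctuelle $g_n \to g$ sur l'ensemble \emph{fini} $\partial U_\cT \subset \HK$, conduira alors à
$$
\limsup_{n \to + \infty} \sup_{U_\cT} (g_n - \varphi) \;\le\; g(\cT) - \varphi(\cT) + 2\varepsilon \;\le\; M + 2\varepsilon .
$$
Le cas $\cT = \cScan$ étant trivial (puisque $g_n(\cScan) = c_n \to c = g(\cScan)$), un recouvrement fini de $C$ par de tels voisinages permettra de conclure en faisant $\varepsilon \to 0$.
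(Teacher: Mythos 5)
Your argument is correct and, for the final semi-continuity estimate, takes a genuinely different route from the paper. The identification of the alternative via $c_n = g_n(\cScan)$ and the decreasing property along segments from $\cScan$ is the same as in the paper. For the pointwise convergence, you make explicit what the paper disguises under the phrase \og{}convergence domin\'ee\fg{}: namely, that the kernel $\cS' \mapsto \langle \cS, \cS' \rangle_{\cScan}$ is continuous and bounded on the compact $\PKber$, so that vague convergence of the measures suffices. (You might add a word on why this kernel is continuous: it is $\dhyp(\cScan, \,\cdot\,)$ composed with the retraction onto the finite segment $[\cScan, \cS]$, which is continuous for the weak topology.) The real divergence is in the Hartogs inequality. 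The paper retracts onto an exhausting sequence of finite trees $\cT_m$, exploits a Dini-type argument (convex decreasing functions on a compact tree converging pointwise to a continuous limit converge uniformly) to get $g_n \circ \pi_{\cT_m} \to g \circ \pi_{\cT_m}$ uniformly, and then passes to the limit in $m$ by a subsequence-of-maximizers argument. You instead use a local maximum principle for $\cP_+$ functions (for a fundamental open $U$ with $\cScan \notin \overline{U}$, $\sup_{\overline{U}} h \le \max_{\partial U} h$, since any segment from $\cScan$ into $U$ must cross the finite boundary $\partial U \subset \HK$), cover $C$ by finitely many such neighborhoods, and pass to the limit via pointwise convergence of $g_n$ on the finite boundaries, with a direct treatment of the point $\cScan$. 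Your version is shorter and avoids the uniform convergence on finite trees entirely; its only cost is the small care needed when $g(\cT) = -\infty$ at a point $\cT \in C \cap \PK$ (there one should replace \og{}$g < g(\cT)+\varepsilon$ sur $\partial U_\cT$\fg{} by \og{}$g < -A$ sur $\partial U_\cT$\fg{} for $A$ chosen large enough that $-A - \varphi(\cT) + \varepsilon \le M$), which does not affect the conclusion.
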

\begin{proof}
Pour chaque entier~$n \ge 1$ on pose $\rho_n \= [\cScan] + \Delta
g_n$.  C'est une mesure positive de masse $1$.  Quitte {\`a} extraire
une sous-suite, on peut donc supposer que~$\rho_n$ converge vers une
mesure de probabilit{\'e}~$\rho$ lorsque $n \to + \infty$.
  
Supposons tout d'abord que $\lim_{n \to + \infty} \{ g_n(\cScan), n \ge 1 \} = -
\infty$.  Comme $g_n$ est d{\'e}crois\-san\-te sur tout segment partant de
$\cScan$, on a $g_n \to -\infty$ uniform{\'e}ment.

Supposons maintenant que $\varlimsup_{n \to + \infty} \{ g_n(\cScan), n \ge 1 \} > - \infty$.
Quitte {\`a} remplacer $\{ g_n \}_{n \ge 1}$ par $\{ g_n -
g_n(\cScan)-1 \}_{n \ge 1}$ et {\`a} prendre une sous-suite, on peut supposer
que pour chaque $n \ge 1$ on~a $g_n(\cScan) =-1$.  {\`A} nouveau par
d{\'e}croissance des $g_n$, on a $g_n\le -1$ partout.  Soit
$\hat{g}_{\rho_n}$ d{\'e}finie par~\eqref{e:def-pot} lorsque le point
base~$\cS_0$ est {\'e}gal {\`a}~$\cScan$.  Par construction, $ \Delta
\hat{g}_{\rho_n} = \Delta g_n$ et ces deux fonctions sont {\'e}gales en~$\cScan$.
Elles sont donc {\'e}gales partout.
Par convergence domin{\'e}e, on~a $\hat{g}_{\rho_n} \to \hat{g}_{\rho}$ ponctuellement sur $\HK$, et par cons{\'e}quent on~a $g_n \to \hat{g}_{\rho}$ ponctuellement.

Pour montrer la derni{\`e}re assertion de la proposition, soit $\{ \cT_m
\}_{m \ge 1} \subset \HK$ une suite croissante d'arbres ferm\'es finis dont l'adh\'erence de la
r{\'e}union contient le support topologique de~$\rho$,
voir \cite[Lemma~7.15]{valtree}.  
Quitte {\`a} agrandir les~$\cT_m$, on suppose que tous ces arbres
contiennent~$\cScan$.  La suite de mesures de
probabilit{\'e}~$\{ \rho_{\cT_m} \}_{m \ge 1}$ converge vers~$\rho$, et $g \circ \pi_{\cT_m}$ d\'ecroit
vers $g$ ponctuellement. 

D'autre part, pour un entier~$m \ge 1$ fix{\'e}, la fonction $g_n|\cT_m$ est d\'ecroissante sur~$\cT_m$ et converge ponctuellement vers la fonction continue $g|\cT_m$ lorsque~$n \to + \infty$.
Par consequent la convergence est uniforme sur $\cT_m$.
On en d\'eduit que $g_n \circ \pi_{\cT_m}\to g \circ \pi_{\cT_m}$ uniform\'ement sur $\PKber$.
On a donc pour tout~$m \ge 1$
$$ \limsup_{n \to + \infty} \left( \sup_{C} \left(g_n - \varphi
\right) \right) \le \lim_{n \to + \infty} \left( \sup_{C} \left( g_n
\circ \pi_{\cT_m} - \varphi \right) \right) = \sup_{C} \left( g \circ
\pi_{\cT_m} - \varphi \right)~.
$$

Pour chaque $m \ge 1$ prenons $\cS_m \in C$ tel que
$$
(g \circ \pi_{\cT_m} - \varphi)(\cS_m)
=
\sup_C \, (g \circ \pi_{\cT_m} -  \varphi) ~.
$$ Quitte {\`a} extraire une sous-suite, on suppose que $\{ \cS_m \}_{m
\ge 1}$ converge vers un certain point~$\cS$.  Etant donn{\'e}~$\e > 0$
on choisit un ouvert fondamental~$U$ contenant~$\cS$ tel que $\sup_U |\varphi-
\varphi(\cS)| \le \e$.  Notons~$\cS_0$ le point du bord de $U$
le plus proche de $\cScan$ (si $\cScan \in U$, on pose $\cS_0 \= \cScan$). 
Comme les fonctions $g, g \circ \pi_{\cT_m}$ sont
d{\'e}croissantes sur tout segment partant de~$\cScan$, pour~$m \ge 1$
assez grand tel que $g \circ \pi_{\cT_m} (\cS_0) \le g(\cS_0) + \e$
et~$\cS_m \in U$, on~a,
\begin{multline*}
\sup_C\, (g-\varphi) \ge \sup_{U \cap C} g - \varphi(\cS_0) -\e =
g(\cS_0) - \varphi(\cS_0) - \e \ge \\ \ge g \circ \pi_{\cT_m}(\cS_0) -
\varphi(\cS_0) - 2\e \ge g \circ \pi_{\cT_m}(\cS_m) - \varphi(\cS_0) -
2\e \ge \\ \ge g \circ \pi_{\cT_m}(\cS_m)- \varphi(\cS_m) - 3\e =
\sup_C \, (g \circ \pi_{\cT_m} - \varphi) - 3\e~.
\end{multline*}
Donc $\sup_C (g-\varphi) \ge \limsup_{m \to + \infty} \left( \sup_C \,
(g \circ \pi_{\cT_m} - \varphi) \right)$.  Ce qui termine la preuve.
\end{proof}

%
%
%
%

\section{La mesure d'{\'e}quilibre}\label{s:equilibre}
Fixons une fraction rationnelle~$R$ de degr{\'e} au moins~2.

%
%
\subsection{Construction de la mesure}\label{ss:construction}
\begin{Prop-def}\label{p:def}
Pour tout {\'e}l{\'e}ment $\cS \in \HK$, la suite de mesures de
probabilit{\'e} $\{ \deg(R)^{-n} R^{n*} [\cS] \}_{n \ge 0}$ converge
vaguement vers une mesure~$\rho_R$ ind{\'e}pendante du choix de~$\cS$.
On appellera cette mesure la \emph{mesure d'{\'e}quilibre} de~$R$.
\end{Prop-def}
\begin{proof}
  Prenons $\cS$ un point arbitraire de $\HK$. La masse de la
  mesure~$R^*[\cS]$ est {\'e}gale au degr{\'e} de $R$, donc nous
  pouvons {\'e}crire $\deg(R)^{-1}R^* [\cS] = [\cS] + \Delta g$, o{\`u}
  $g: \HK \to \R$ est un potentiel.  La mesure~$R^*[\cS]$ est
  support{\'e}e sur le sous-ensemble fini $R^{-1}(\cS)$ de~$\HK$.
  L'enveloppe convexe de l'ensemble $\supp R^*[\cS] \cup \{ \cS\}$ est
  donc un arbre fini $\cT$ dont les bouts sont situ{\'e}s dans $\HK$.
  Le potentiel~$g$ est localement constant en dehors de~$\cT$ et est
  born{\'e} sur~$\cT$, il est donc born{\'e} sur~$\HK$ tout entier.
  On obtient donc pour tout entier~$n$~:
$$ \deg(R)^{-n} R^{n*}[\cS] = [\cS] + \Delta g_n~, \text{ avec } g_n =
\sum_{k = 0}^{n-1} \frac{g\circ R^k}{\deg(R)^k}~.
$$ La suite $\{ g_n \}_{n \ge 0}$ converge uniform{\'e}ment sur $\HK$
vers une fonction continue $g_\infty$, donc $\deg(R)^{-n} R^{n*}[\cS]$
con\-verge vaguement vers une mesure $\rho_R$ lorsque $n \to +
\infty$.

Si l'on choisit un autre {\'e}l{\'e}ment $\cS'\in\HK$, on peut {\'e}crire
$[\cS'] = [\cS] + \Delta h$ avec $h :\HK \to \R$ born{\'e}e. De l'{\'e}quation
$$
\deg(R)^{-n} R^{n*}[\cS']
=
\deg(R)^{-n} R^{n*}[\cS] + \Delta (\deg(R)^{-n} h\circ R^n)~,
$$
on tire $\deg(R)^{-n} R^{n*}[\cS']\to \rho_R$.
\end{proof}
\begin{Rem}\label{rem-charge}
  L'argument montre que plus g{\'e}n{\'e}ralement pour toute mesure
  $\rho$ v{\'e}rifiant $\rho = [\cS] + \Delta h$, avec~$h$ born{\'e}e
  et $\cS\in\HK$, on a $\deg(R)^{-n} R^{n*}\rho \to \rho_R$.  Nous
  verrons {\`a} la section suivante comment {\'e}tendre ce type de
  r{\'e}sultat dans certains cas o{\`u} la fonction~$h$ n'est plus
  born{\'e}e, par exemple lorsque~$\rho$ est atomique et support{\'e}e
  sur un point non exceptionnel dans~$\PK$.
\end{Rem}
L'op{\'e}rateur $\deg(R)^{-1}R^*$ est continu et pr{\'e}serve l'espace
des mesures de probabilit{\'e}s. Il est donc clair que $\rho_R$ est
une mesure de probabilit{\'e}
qui v{\'e}rifie l'{\'e}quation d'invariance:
\begin{equation}\label{eq-inv}
R^*\rho_R = \deg(R)\, \rho_R~.
\end{equation}
En particulier, c'est une mesure invariante $R_*\rho_R = \rho_R$.  L'{\'e}quation
d'invariance~\eqref{eq-inv}, combin{\'e}e {\`a} la
Proposition~\ref{prop:calcul} donne de plus pour tous
$\cS,\cS'\in\PKber$ tels que $R(\cS') = \cS$:
\begin{equation}\label{eq-inv2}
\rho_R \{\cS'\} = \frac{\deg_R(\cS')}{\deg(R)}\, \rho_R\{\cS\}
~.
\end{equation}
\begin{Prop}\label{p:holder}
Il existe $\alpha \in (0, 1]$ tel que pour chaque $\cS \in \HK$, tout
potentiel $g_0: \HK \to \R$ satisfaisant $\rho_R = [\cS] + \Delta g_0$
est H{\"o}lder d'exposant~$\alpha$ par rapport {\`a} la distance~$\dpk$.
Plus pr{\'e}cis{\'e}ment, on peut trouver une constante $C>0$ telle
que pour tous $\cS', \cS'' \in \HK$, on ait
$$
|g_0(\cS') - g_0(\cS'')| \le C\, \dpk (\cS', \cS'')^\alpha~.
$$
De plus, pour tout $z\in\PK$ et $r>0$, on a
$$
\rho_R (\sB(z,r)) \le C \, r^\alpha~,
$$
o{\`u} $\sB(z,r)$ d\'esigne l'enveloppe convexe dans $\PKber$ de la boule $\{w \in K, \dpk (z,w) \le r \}$.
En particulier, $\rho_R$ ne charge pas les points de~$\PK$.
\end{Prop}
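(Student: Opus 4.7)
The plan is to choose one specific potential $g_\infty$ coming from the construction of $\rho_R$ in Proposition-D{\'e}finition~\ref{p:def}, prove that it is H{\"o}lder, and deduce the ball mass estimate from it; the fact that $\rho_R$ does not charge any point of $\PK$ will then follow by letting $r\to 0$. Since any two potentials for the signed measure $\rho_R-[\cS]$ differ by an additive constant, H{\"o}lder regularity for one of them gives it for all. Moreover, changing the base point $\cS \in \HK$ only modifies the potential by a Gromov product $\langle\cdot,\cS\rangle_{\cS_0}$, which is $\dpk$-Lipschitz, so it suffices to treat the case $\cS=\cScan$.

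Starting from $\deg(R)^{-1} R^*[\cScan]=[\cScan]+\Delta g$, I take $g_\infty = \sum_{k\ge 0} \deg(R)^{-k}\, g\circ R^k$, which is bounded and continuous on $\HK$ and satisfies $\rho_R=[\cScan]+\Delta g_\infty$. Two ingredients are needed. First, that $R$ is Lipschitz for $\dpk$ on $\PKber$: writing $R=P/Q$ and analysing the image of small spherical balls under $R$, one obtains a constant $C_R > 0$ with $\dpk(R(\cS'),R(\cS''))\le C_R\,\dpk(\cS',\cS'')$, and hence $\dpk(R^k(\cS'),R^k(\cS''))\le C_R^k\,\dpk(\cS',\cS'')$ for all $k$. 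Second, that $g$ itself is Lipschitz for $\dpk$: the signed measure $\Delta g$ has finite support contained in $R^{-1}(\cScan)\cup\{\cScan\}\subset\HK$, all at positive $\dpk$-distance from $\PK$, so the integral formula
$$ g(\cdot) = -\int \langle\cdot,\cS'\rangle_{\cS_0}\, d\Delta g(\cS') $$
produces a $\dpk$-Lipschitz function.

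Combining these, $|g\circ R^k(\cS')-g\circ R^k(\cS'')|\le \min\bigl(2\|g\|_\infty,\, L\,C_R^k\,\dpk(\cS',\cS'')\bigr)$ for each $k$. Choosing $\alpha\in(0,1]$ with $C_R^\alpha<\deg(R)$ and using the elementary interpolation $\min(A,Bx) \le A^{1-\alpha}(Bx)^\alpha$, the resulting geometric series $\sum_k(C_R^\alpha/\deg(R))^k$ converges and yields $|g_\infty(\cS')-g_\infty(\cS'')|\le C\,\dpk(\cS',\cS'')^\alpha$. For the ball mass estimate, fix $\sB=\sB(z,r)$ in $\PKber$ with Berkovich boundary point $\cS_\sB\in\HK$. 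The function $g_\infty$ belongs to $\cP_+$, so approximating it by $g_\infty\circ\pi_\cT$ along an exhaustion by finite subtrees $\cT$ passing through $\cS_\sB$ and using integration by parts on $\cT$ combined with the continuity of $\Delta$ on $\cP_+$, one arrives at an estimate of the form $\rho_R(\sB) \le \sup_{\cS\in\sB}|g_\infty(\cS) - g_\infty(\cS_\sB)|$. Since $\sup\{\dpk(\cS,\cS_\sB) : \cS\in\sB\}\le r$, the H{\"o}lder estimate then gives $\rho_R(\sB(z,r)) \le C\,r^\alpha$, and the non-charging of points follows.

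The main obstacle will be this last step: turning a H{\"o}lder (hence non-differentiable) estimate on the potential into a mass estimate on balls via a Laplacian argument on the Berkovich tree. A rigorous proof needs the continuity of $\Delta$ on $\cP_+$ and a careful passage to the limit along the exhaustion by finite subtrees, exploiting the compatibility between $\dpk$, the tree structure, and the monotone decrease of potentials of $\cP_+$ along segments emanating from $\cScan$.
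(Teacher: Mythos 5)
Your treatment of the H\"older regularity of $g_0$ is correct and essentially the same as the paper's: reduce to $\cS=\cScan$, take $g$ Lipschitz with $\Delta g=\deg(R)^{-1}R^*[\cScan]-[\cScan]$, use that $R$ is $\dpk$-Lipschitz with some constant $M>\deg(R)$, and sum the series $g_0=\sum_k \deg(R)^{-k} g\circ R^k$. The paper splits the sum at an optimized level $N$ while you interpolate term by term via $\min(A,Bx)\le A^{1-\alpha}(Bx)^\alpha$; both give the same exponent $\alpha=\log\deg(R)/\log M$, and the difference is purely cosmetic.

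The ball-mass step, however, contains a genuine gap: the intermediate inequality $\rho_R(\sB)\le\sup_{\cS\in\sB}\lvert g_\infty(\cS)-g_\infty(\cS_\sB)\rvert$ you aim for is false in general. If $\rho_R$ places mass $m$ arbitrarily close to (or exactly at) the boundary point $\cS_\sB$, that mass does not contribute to the derivative of $g_\infty$ strictly inside $\sB$, so the oscillation of $g_\infty$ in $\sB$ can be made arbitrarily small compared to $m=\rho_R(\sB)$; in the extreme case $\rho_R\lvert_\sB = m[\cS_\sB]$ the potential is constant on $\sB$ while the mass is positive. So no amount of integration by parts along finite subtrees can recover this estimate. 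The paper circumvents this precisely by comparing against the \emph{dilated} ball $B(z,er)$: let $\cS,\cS'$ be the Berkovich points of $B(z,r)$ and $B(z,er)$, and let $\chi$ be the piecewise-affine potential equal to $1$ on $\sB(z,r)$, to $0$ off $\sB(z,er)$, with $\Delta\chi=[\cS]-[\cS']$. Then $\mathbf{1}_{\sB(z,r)}\le\chi$, $\chi(\cScan)=0$, and the symmetry of the energy pairing (the integration-by-parts identity from~\cite{FRL}) gives
$$\rho_R(\sB(z,r))\le\int\chi\,d\rho_R=\int\chi\,d(\Delta g_0)=\int g_0\,d(\Delta\chi)=g_0(\cS)-g_0(\cS').$$
The crucial point is that all of $\rho_R\lvert_{\sB(z,r)}$ contributes to $\partial g_0$ along the \emph{entire} segment $[\cS',\cS]$, which has $\dhyp$-length exactly $1$, so the drop is at least the mass; and $\dpk(\cS,\cS')\sim r$, so the H\"older bound closes the argument. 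You should replace your appeal to $\sup_{\cS\in\sB}\lvert g_\infty(\cS)-g_\infty(\cS_\sB)\rvert$ with this dilation-and-test-function device; without it the proof does not go through.
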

En dimension sup{\'e}rieure, une version de ce r{\'e}sultat a {\'e}t{\'e} d{\'e}montr{\'e}e dans~\cite{KawSil}.
Notons que ce fait est classique sur le corps des complexes, et reste
valable en dimension sup{\'e}rieure~\cite[\S 1.7]{Sib99}
ou~\cite{DinSib05}.
La preuve que nous donnons ici suit celles de~\cite[Th\'eor\`eme~3.7.1]{DinSib03}, et~\cite[Proposition~1.2]{guedj}.
\begin{proof}
Soit $\cScan$ le point de $\HKrat$ associ{\'e} {\`a} la boule
unit{\'e} de~$K$.  Pour chaque $\cS \in \HK$, tout potentiel
$g_1$ satisfaisant $\Delta g_1 = [\cScan] - [\cS]$ est Lipschitz car $\langle \cdot, \cS\rangle_{\cScan}$ l'est. On
peut donc se ramener au cas $\cS = \cScan$.
Soit alors $g
 :\HK\to\R$ un potentiel tel que
$$ \Delta g = \deg(R)^{-1}R^*[\cScan] - [\cScan]. $$
La fonction~$g$ est Lipschitz sur $\PKber$, on peut donc trouver une
constante $C>0$ telle que $|g(z) - g(w)|\le C\cdot \dpk(z,w)$ pour
tous $z,w\in\PKber$.
D'autre part, la fonction~$R$ est aussi Lipschitz pour une autre constante~$M$ que l'on peut supposer strictement plus grande que~$\deg(R)$, voir par exemple~\cite[Theorem~10]{KawSil}.
De $\rho_R =\lim_{n \to + \infty} \deg(R)^{-n} R^{n*} [\cScan]$, on d{\'e}duit que $\rho_R = [\cScan] + \Delta g_0$, avec
$$ g_0 = \sum_0^\infty \deg(R)^{-k} g\circ R^k. $$
On fixe~$N$
entier, et on proc{\`e}de alors aux estimations suivantes~:
\begin{eqnarray*}
|g_0(z)-g_0(w)|
& \le &
\sum_{k = 0}^{N-1} \frac{|g\circ R^k(z) - g\circ R^k(w)|}{\deg(R)^{k}} + \sup|g|~
\sum_{k = N}^\infty  \frac1{\deg(R)^{k}} \\
& \le &
C_1 \cdot \dpk(z,w)\, (M/\deg(R))^N + C_2\cdot \deg(R)^{-N} ~,
\end{eqnarray*}
pour des constantes $C_1,C_2>0$ ind\'ependantes de $z,w,N$.  On choisit maintenant~$N = N(z,w)$ de telle
sorte que $M C_2/C_1 \ge \dpk(z,w)\,M^N \ge C_2/C_1$, ce qui est possible d\`es que $\dpk(z,w) \le MC_2/C_1$.
On en d{\'e}duit qu'il existe une constante $C > 0$, ind{\'e}pendante
de~$z$ et~$w$, telle que si l'on pose $\alpha = \log \deg(R) /\log M$,
alors $|g_0(z)-g_0(w)|\le C \cdot \dpk(z,w)^\alpha$.
On note enfin que ces in\'egalit\'es sur $\PK$ entraine les m\^emes sur $\PKber$.

Soit $z \in \PK$ et~$r > 0$.  Pour montrer l'in{\'e}galit{\'e} $\rho_R (
\sB(z,r)) \le C \, r^\alpha$ on se ram{\`e}ne au cas~$r \in (0,
\tfrac{1}{2})$.  Soit~$\cS$ (resp.~$\cS'$) le point de $\PKber$
associ{\'e} {\`a} la boule $B(z, r)$ (resp. $B(z, er)$), et soit $\chi :
\HK \to \R$ le potentiel constant {\'e}gal {\`a}~1 sur $B(z, r)$, constant
{\'e}gal {\`a}~0 hors de $B(z, er)$, et tel que $\Delta \chi = [\cS] -
[\cS']$.  Alors on a
$$
\rho_R(\sB(z, r)) \le \int \chi \, d(\Delta g_0)
=
\int g_0 \, d (\Delta \chi)
=
g_0(\cS) - g_0(\cS')
\le
C\dpk(\cS, \cS')^\alpha
=
Cr^\alpha ~.
$$
La premi\`ere \'egalit\'e r\'esulte de~\cite[Lemmes~4.4 et~4.3]{FRL} car la fonction \'energie est sym\'etrique.
\end{proof}

%
%
\subsection{D{\'e}monstration du Th{\'e}or{\`e}me~\ref{t:equi-measures}}\label{ss:equi-measures}
Le fait que~$\rho_R$ ne charge aucun point de~$\PK$ est une
cons{\'e}quence de la Proposition~\ref{p:holder}.  On montre maintenant
que le fait que le support de~$\rho_R$ soit {\'e}gal {\`a}~$J_R$ est une
cons{\'e}quence de~\eqref{e:convergence generale}.  En
effet, consid{\'e}rons une mesure de probabilit{\'e} quelconque~$\rho$
support{\'e}e dans~$J_R$.  Par la Proposition~\ref{prop:calcul}, le
support de la mesure~$R^{*n} \rho$ est contenu dans~$J_R$.  Comme
l'ensemble $J_R$ est disjoint de l'ensemble exceptionnel, et
compl{\`e}tement invariant par~$R$ (Proposition~\ref{prop:generalites
Julia}), la convergence~\eqref{e:convergence generale} implique que le
support topologique de~$\rho_R$ est contenu dans~$J_R$.  D'autre
part, l'{\'e}quation d'invariance~\eqref{eq-inv} et la
Proposition~\ref{prop:calcul} montrent que le support topologique
de~$\rho_R$ est compl{\`e}tement invariant.  Comme l'ensemble de Julia
est caract{\'e}ris{\'e} comme le plus petit ensemble compact disjoint de
l'ensemble exceptionnel qui est compl{\`e}tement invariant par~$R$
(Proposition~\ref{prop:generalites Julia}), on conclut que le support
topologique de~$\rho_R$ est {\'e}gal {\`a}~$J_R$.

Le reste de cette d{\'e}monstration est consacr{\'e} {\`a} la preuve
de~\eqref{e:convergence generale}.  L'ensemble exceptionnel~$E_R$ {\'e}tant
totalement invariant, pour tout $n \ge 1$ on a
$$
\deg(R)^{-n} R^{n*} \rho(  E_R) = \rho(E_R) ~.
$$ Si $\rho(E_R)>0$, on ne peut pas donc avoir $\lim_{n \to + \infty}
\deg(R)^{-n} R^{n*}\rho =\rho_R$.

R{\'e}ciproquement, supposons que $\rho(E_R)=0$.  On d\'esigne par~$g :
\HK \to \R$ le potentiel donn{\'e} par~\eqref{e:def-pot}, lorsque $\cS_0
= \cScan$, de telle sorte que $\Delta g = \rho - [\cScan]$.  Alors
pour tout $n \ge 1$ on a
$$
\deg(R)^{-n}R^{n*}\rho
=
\deg(R)^{-n}R^{n*}[\cScan] + \Delta ( \deg(R)^{-n} g \circ R^n)~.
$$ Il suffit donc de montrer que $\deg(R)^{-n} g \circ R^n$ converge
ponctuellement vers z{\'e}ro sur~$\HK$, lorsque $n \to + \infty$.
Pour cela on va estimer les vitesses de convergence des points de
$\HK$ vers~$\PK$.  Comme on~a~$g \le 0$, il suffit de montrer
$\liminf_{n \to + \infty} \deg(R)^{-n} g \circ R^n\ge 0$.

Fixons un point~$\cS$ dans~$\HK$.

\noindent\textrm{Cas 1.} \textit{Le point $\cS$ appartient au bassin
d'attraction d'un point exceptionnel $z_0 \in \PK$ de~$R$}.  Quitte
{\`a} remplacer $R$ par un it{\'e}r{\'e} on peut supposer~$z_0$ fix{\'e}
par~$R$.  Apr{\`e}s un changement de coordonn{\'e}es on suppose qu'il
existe $r_0 \in (0, 1)$ tel que pour tout point $\cS' \in \HK$
satisfaisant $\dpk(\cS', z_0) < r_0$ on~a
$$
\dpk(R(\cS'), \PK)
\ge
\dpk(\cS', \PK)^{\deg(R)}~.
$$ Quitte {\`a} remplacer $\cS$ par un point dans son orbite positive,
on suppose que $\dpk(\cS, z_0) < r_0$.  Alors pour tout $n \ge 1$
on~a,
$$
\dpk(R^n(\cS), \PK)
\ge
\dpk(\cS, \PK)^{\deg(R)^n}~.
$$
Fixons~$\varepsilon > 0$, et soit~$r \in (0, 1)$ assez petit tel que~$\rho(\sB(z_0, r)) \le \varepsilon$.
De~\eqref{e:def-pot} on d{\'e}duit que pour tout $\cS' \in \HK$ satisfaisant $\dpk(\cS', z_0) < r$ on a
$$ g(\cS) - g(\cS') \ge \varepsilon \, \dhyp (\cS, \cS'). $$
Comme de plus $B_{\cS'} \subset B_\cS$, par~\eqref{eforget} on a, 
$$ \dhyp(\cS,\cS') = \log \dpk(\cS,\PK) - \log \dpk (\cS',\PK), $$
d'o\`u on obtient,
$$ g(\cS') \ge \varepsilon \log \dpk(\cS', z_0) + \left( g(\cS) - \varepsilon \log \dpk(\cS, z_0) \right). $$
On~a donc
$$
\liminf_{n \to + \infty} \deg(R)^{-n} g(R^n(\cS))
\ge
\e \log \dpk(\cS, \PK)~.
$$

\noindent\textrm{Cas 2.} \textit{Le point~$\cS$ n'appartient pas au
bassin d'attraction d'un point exceptionnel et $\degtop(R) > 1$}.
Comme $\degtop(R) > 1$, il existe au plus un nombre fini de points
dans~$\PK$ dont le degr{\'e} local est {\'e}gal au degr{\'e} de~$R$.  Quitte
{\`a} remplacer~$R$ par un it{\'e}r{\'e} on suppose que tout point de~$\PK$
dont le degr{\'e} local est {\'e}gal {\`a} $\deg(R)$ est exceptionnel.  On s'appuie alors sur le lemme suivant.
\begin{Lem}\label{l:dist} Supposons que $\degtop(R) > 1$. Alors,
on peut trouver des constantes $r > 0$ et $C > 0$ tel que tout point~$\cS$
satisfaisant~$\dpk(\cS, E_R) < r$ appartienne au bassin d'attraction
d'un point exceptionnel, et pour tout point $\cS$ satisfaisant
$\dpk(\cS, E_R) \ge r$ on ait
\begin{equation}\label{e:dist}
\dpk(R(\cS), \PK) \ge C \times \dpk(\cS, \PK)^{\deg(R) -1 } ~.
\end{equation}
\end{Lem}
Nous donnons une preuve de ce lemme ci-apr\`es.

En it\'erant la relation pr\'ec\'edente, on en d\'eduit pour tout entier $n \ge 1$:
$$
\dpk (R^n(\cS), \PK) \ge C(n) \times \dpk (\cS, \PK)^{(\deg(R) - 1)^n} ~,
$$ avec $C(n) \= C \times C^{\deg(R) - 1} \times \cdots \times
C^{(\deg(R) - 1)^{n - 1}}$. De~\eqref{eforget} et~\eqref{e:def-pot}, on tire pour tout $\cS' \in \HK$
$$
g(\cS') \ge \log \dpk (\cS', \PK) -1 ~.
$$
Par cons{\'e}quent
$$
g(R^n(\cS)) \ge \log C(n) + (\deg(R) - 1)^n \log \dpk (\cS, \PK) -1~,
$$ d'o{\`u} $\liminf_{n \to + \infty} \deg(R)^{-n} g(R^n(\cS)) = 0$.
 
\noindent\textrm{Cas 3.} \textit{Le point~$\cS$ n'appartient pas au
bassin d'attraction d'un point exceptionnel et $\degtop(R) = 1$}.
Dans ce cas la caract{\'e}ristique de~$K$ est strictement positive, {\'e}gale {\`a} un
certain nombre premier~$p$.  De plus, le degr{\'e} $q > 1$ de~$R$ est
une puissance de~$p$, et quitte {\`a} faire un changement de
coordonn{\'e}es, on peut supposer que~$R(z) = z^q$.  On note~$\tR$ la
fraction rationnelle {\`a} coefficients dans le corps r{\'e}siduel~$\tK$
de~$K$, d{\'e}finie par~$\tR(\zeta) = \zeta^q$.

On v{\'e}rifie ais{\'e}ment que pour tout $\cS \in \PKber$ on~a
$$
\dhyp (R(\cS), \cScan) = q\times \dhyp (\cS, \cScan) ~.
$$ On~a en particulier $R(\cScan) = \cScan$.  Le cas $\cS = \cScan$
est donc imm{\'e}diat, et on suppose alors que $\cS \in \PKber \setminus
\{ \cScan \}$.  Consid{\'e}rons la partition de $\PKber \setminus \{
\cScan \}$ en boules ouvertes de~$\PKber$ associ{\'e}es aux classes
r{\'e}siduelles,
$$
\PKber \setminus \{ \cScan \} = \sqcup_{ \zeta \in \PtK} \Bber(\zeta) ~,
$$ de telle sorte que pour chaque $\zeta \in \PtK$ on ait $R(\Bber
(\zeta)) = \Bber (\tR(\zeta))$.  On montre (cf. \cite[Proposition~4.32]{R1}) que pour chaque
$\zeta \in \PtK$ p{\'e}riodique par~$\tR$, la boule $\Bber(\zeta)$
contient un point p{\'e}riodique par~$R$ de m{\^e}me p{\'e}riode.  De plus
ce point est exceptionnel pour~$R$ et son bassin d'attraction contient
la boule~$\Bber(\zeta)$.  Comme par hypoth{\`e}se le point~$\cS$
n'appartient pas {\`a} un bassin d'attraction d'un point exceptionnel
de~$R$, si l'on d{\'e}signe par~$\zeta$ l'{\'e}l{\'e}ment de~$\PtK$ tel que
la boule $\Bber(\zeta)$ contienne le point~$\cS$, alors l'orbite
positive de~$\zeta$ par $\tR$ est infinie.  On a en particulier
$$
\lim_{n \to + \infty} \rho(\Bber(R^n(\zeta))) = 0 ~.
$$

Par~\eqref{e:def-pot} pour chaque $\zeta' \in \PtK$ et $\cS' \in \Bber(\zeta')$ on~a
$$
g(\cS') \ge - 1 - \rho(\Bber(\zeta')) \times \dhyp(\cS', \cScan) ~,
$$
d'o{\`u} l'on d{\'e}duit
$$
\liminf_{n \to + \infty} \deg(R)^{-n} g(R^n(\cS))
\ge
\liminf_{n \to + \infty} \rho(\Bber(\tR^n(\zeta))) \times \dhyp(\cS, \cScan)
= 0 ~.
$$

\begin{proof}[D\'emonstration du Lemme~\ref{l:dist}]

L'existence de $r>0$ suit de la finitude de $E_R$.
Pour montrer~\eqref{e:dist}, par compacit\'e il suffit de v\'erifier cette \'equation localement en tout point~$\cS$ tel que
$\dpk(\cS, E_R) \ge r$.
Supposons pour commencer  que $\deg(R) = \degtop(R)$, c'est-\`a-dire que
l'ensemble $\Crit(R) \subset\PK$ o\`u la d\'eriv\'ee de $R$ s'annule soit fini.

Prenons tout d'abord un point $\cS$ qui ne soit pas dans $\Crit(R)$. Quitte \`a changer de coordonn\'ees, on peut fixer un ouvert fondamental $U$ contenant $\cS$ tel que
$$ \{ \infty, R^{-1}(\infty), \Crit(R)\} \cap U = \emptyset. $$
Dans ce cas, $R'$ ne s'annule pas dans $U\cap \PK$, et on peut donc trouver $C_0>0$ tel que $|R'(z)| \ge C_0$ pour tout $z \in U\cap \PK$.
La fonction
$$ D(z_1,z_2) \= \frac{R(z_1)-R(z_2)}{z_1-z_2} - R'(z_2) $$
est analytique sur $U\times U$ et s'annule identiquement sur la diagonale. Pour un voisinage convenable $V$ de la diagonale, et pour tout couple $(z_1,z_2) \in V \cap (\PK \times \PK)$, on a donc
$$| D(z_1,z_2)| < C_0, \text{ et } \left| \frac{R(z_1)-R(z_2)}{z_1-z_2}\right| = |R'(z_2)| \ge C_0~. $$
Prenons maintenant $W$ un voisinage ouvert de $\cS$ dans $\PKber$ tel que $W\times W \subset V$. Alors pour tout $\cS' \in W$, on a $\dpk(R(\cS'), \PK) \ge C_0 \times \dpk (\cS', \PK)$  ce qu'il fallait d\'emontrer.

Si le point $\cS$ est dans $\Crit(R)$, on se ram\`ene au cas o\`u $\cS = R(\cS) =0$ et dans une coordonn\'ee ad\'equate  $R(z) = \sum_{j\ge k} a_j z^j$ avec $ k\le \deg(R)-1$, $a_k \neq0$ et $1 \ge |a_j| \to 0$. 
Notons que $\sup_{|z-w|\le \tau} |z^j-w^j| = \tau^j$ pour tout $\tau \in [0, 1]$, o\`u le supremum est pris sur tout~$z, w \in B(0, 1)$.
Fixons~$N$ assez grand pour que pour tout $\tau \le r$, on ait
$\max_{j \ge 1}  |a_j| \tau^j = \max_{j \le N} |a_j| \tau^j$.
Prenons $z \in B(0,1)$ et $\tau \le r$. Alors
$\sup_{|z-w|\le \tau} |R(z)-R(w)| = \max_{j \le N} |a_j| \tau^j \ge |a_k| \tau^k$.
L'image d'une boule par $R$ \'etant une boule on conclut donc que $R(\sB(z,\tau)) \supset \sB(R(z), |a_k| \tau^k)$, ce qui implique $\dpk(R(\cS), \PK) \ge |a_k| \, \dpk(\cS, \PK)^k$.

Ceci conclut la preuve de~\eqref{e:dist} dans le cas $\deg(R) = \degtop(R)$. Si ce n'est pas le cas, alors $K$ est de caract\'eristique 
positive $p>0$ et $R$ est la compos\'ee d'une fraction $R_0$ v\'erifiant $\deg(R_0) = \degtop(R_0)$ et d'un it\'er\'e de $F(z) \= z^p$. On a vu que  $\dpk(F(\cS), \PK) \ge \dpk(\cS, \PK)^{p}$, ce qui permet de conclure.
\end{proof}

%
%

\subsection{M{\'e}lange et th{\'e}or{\`e}me limite central}\label{ss:melange et clt}
La propri{\'e}t{\'e} d'{\'e}quidistribution d{\'e}montr{\'e}e au
paragraphe pr{\'e}c{\'e}dent permet de montrer l'ergodicit{\'e} de la
mesure d'{\'e}quilibre, comme a {\'e}t{\'e} fait dans~\cite[Lemme~4.11]{FG}.
Afin d'avoir une estimation sur la vitesse de m{\'e}lange nous
utiliserons plut{\^o}t une m{\'e}thode bas{\'e}e sur la notion
d'{\'e}nergie analogue de la preuve de Fornaess-Sibony~\cite{ForSib94,ForSib95} dans
le cas complexe.

Rappelons tout d'abord quelques notions sur l'{\'e}nergie des mesures.
Nous renvoyons {\`a}~\cite{FRL} pour les d{\'e}tails.  Une mesure
sign{\'e}e~$\rho$ est dite \textit{{\`a} potentiel continu} s'il
existe un potentiel continu $g : \HK \to \R$ tel que $\Delta g = \rho
- \rho(\PKber)[\cScan]$.  Si $\rho, \rho'$ sont deux mesures
sign{\'e}es dont la mesure trace\footnote{toute mesure sign\'ee $\rho$ est diff\'erence de deux mesures positives \`a support disjoints, $\rho = \rho_1 - \rho_2$, voir~\cite[\S 6]{rudin}. La mesure trace $|\rho|$ est par d\'efinition la mesure positive $\rho_1 + \rho_2$.}  est {\`a} potentiel continu, on
montre que la fonction $\log \sup \{ \cS , \cS'\}$ est int{\'e}grable
pour la mesure $\rho \otimes \rho'$ (voir~\cite[Lemme 4.3]{FRL}), et
on pose
$$
(\rho, \rho') \= - \int_{\mathsf{A}^1_K \times \mathsf{A}^1_K
  \setminus \mathrm{Diag}} \log \sup \{ \cS, \cS' \} \ d\rho(\cS)
\otimes d\rho'(\cS')~.
$$ Lorsque de plus $\rho = \Delta g$, on v{\'e}rifie que $(\rho,
\rho') = - \int_{\mathsf{A}^1_K} g \, d\rho'$,~\cite[Lemme 4.4]{FRL}.
Enfin, l'application $\rho, \rho' \mapsto (\rho, \rho')$ d{\'e}finit
une forme bilin{\'e}aire sym{\'e}trique sur l'espace des mesures
sign{\'e}es dont la mesure trace est {\`a} potentiel continu et telles que $\rho (\PKber) =
\rho'(\PKber) =0$.  On montre que cette forme bilin{\'e}aire est
d{\'e}finie positive,~\cite[Proposition 4.5]{FRL}.  On a donc
l'in{\'e}galit{\'e} de type Cauchy-Schwarz suivante~:
$$
(\rho, \rho' ) \le (\rho, \rho)^{1/2} \, (\rho' , \rho')^{1/2}~.
$$ D'autre part, une fonction $\phi : \PKber \to \R$ est dite
\textit{de classe~$\cC^1$}, si elle est localement constante hors d'un
arbre fini et ferm{\'e} $\cT \subset \HK$, et si $\cT$ est la
r{\'e}union d'un nombre fini de segments sur lesquels $\phi$ est de
classe $\cC^1$, par rapport {\`a} la distance hyperbolique sur~$\HK$.
On d{\'e}finit alors pour chaque $\cS \in \HK \setminus \{ \cScan \}$
le nombre $\partial \phi (\cS)$, comme la d{\'e}riv{\'e}e {\`a} gauche
en~$\cS$ de l'application~$\phi$ restreinte au segment $[\cScan, \cS]$
(param{\'e}tr{\'e} par la distance hyperbolique {\`a} $\cScan$).  Un
calcul montre alors que
$$
(\Delta \phi, \Delta \phi)
=
\int_{\PKber} (\partial \phi)^2 \, d\lambda~,
$$
o{\`u} $\lambda$ est la mesure de Hausdorff $1$-dimensionnelle sur
chaque segment de $\HK$.  Pour reprendre les notations de ~\cite[\S
5.5]{FRL}, on posera $\langle \phi, \phi \rangle \= \int_{\PKber}
(\partial \phi)^2 \, d\lambda$.

Rappelons finalement qu'une mesure invariante $\rho$ est
\textit{m{\'e}langeante} pour $R$ si pour tout couple de fonctions $\phi, \psi :
\PKber \to \R$ de carr{\'e} int{\'e}grable par rapport {\`a}~$\rho$, on a
$\int (\phi\circ R^n) \, \psi\, d\rho_R\to \int \phi \, d\rho_R \times
\int \psi \, d \rho$ lorsque $n \to + \infty$. Une mesure m\'elangeante est 
\emph{ergodique} au sens o\`u tout sous-ensemble $E$ invariant est soit de mesure
nulle soit de mesure totale. 
\begin{Prop}\label{p:melange}
La mesure~$\rho_R$ est m{\'e}langeante, et elle est exponentiellement
m{\'e}\-langeante par rapport aux observables de classe~$\cC^1$.  Plus
pr{\'e}cis{\'e}ment, il existe une constante $C > 0$ telle que pour
toute fonction $\phi : \PKber \to \R$ dans $L^\infty(\rho_R)$, pour
toute fonction $\psi : \PKber \to \R$ de classe~$\cC^1$ sur~$\PKber$,
et pour tout entier~$n\ge0$ on~a,
\begin{equation}\label{eq:mixing}
\left|\int (\phi\circ R^n) \times \psi\, d\rho_R - \int \phi \,
d\rho_R\times \int \psi \, d \rho_R \right| \le C \times \| \phi
\|_{L^\infty} \times \langle \psi, \psi \rangle^{1/2} \times
\deg(R)^{-n/2} ~.
\end{equation}
\end{Prop}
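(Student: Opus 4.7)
The plan is to express the correlation as the integration of $\psi$ against a signed measure $\mu_n$ whose energy decays like $\deg(R)^{-n}$, and to control the pairing using the Cauchy--Schwarz inequality for the energy pairing introduced in~\S\ref{ss:melange et clt}.

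First, replacing $\phi$ by $\phi - \int\phi\,d\rho_R$ leaves the left-hand side of~\eqref{eq:mixing} unchanged (by invariance $R_{*}\rho_R = \rho_R$) and enlarges $\|\phi\|_{L^\infty}$ by a factor at most~$2$, so we may assume $\int \phi\,d\rho_R = 0$. Setting $\mu_n \= (\phi\circ R^n)\,\rho_R$, the iterated invariance $R^{n*}\rho_R = \deg(R)^n\,\rho_R$ together with the pointwise relation $R^n_{*}(f\cdot(\phi\circ R^n)) = \phi\cdot R^n_{*}f$ yields $\mu_n = \deg(R)^{-n}\,R^{n*}(\phi\,\rho_R)$; in particular $\mu_n$ has total mass zero and the left-hand side of~\eqref{eq:mixing} equals $\int\psi\,d\mu_n$.

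Second, the energy transforms covariantly under pullback: for every signed measure $\nu$ of mass zero with continuous potential $g_\nu$, one has $(R^{n*}\nu, R^{n*}\nu) = \deg(R)^n\,(\nu,\nu)$. Indeed, $R^{n*}\nu = \Delta(g_\nu\circ R^n)$ by \S\ref{ss:potentiel}, and the pointwise identity $R^n_{*}(g_\nu\circ R^n) = \deg(R)^n\,g_\nu$---which follows from $\sum_{R^n(\cS')=\cS}\deg_{R^n}(\cS') = \deg(R)^n$ (Proposition-D\'efinition~\ref{pf:degre local})---combined with the formula $(\rho,\rho') = -\int g_\rho\,d\rho'$ gives the transfer identity. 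Applied to $\nu = \phi\,\rho_R$, this yields $(\mu_n, \mu_n) = \deg(R)^{-n}\,(\phi\,\rho_R, \phi\,\rho_R)$.

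Third, since $\psi$ is de classe $\cC^1$, $\Delta\psi$ est une mesure sign\'ee de masse nulle et d'\'energie finie $(\Delta\psi, \Delta\psi) = \langle\psi,\psi\rangle$. The symmetric pairing formula gives $\int\psi\,d\mu_n = -(\Delta\psi, \mu_n)$, and Cauchy--Schwarz for the energy pairing yields
\begin{equation*}
\Bigl|\int\psi\,d\mu_n\Bigr| \le \langle\psi,\psi\rangle^{1/2}\,(\mu_n,\mu_n)^{1/2} = \langle\psi,\psi\rangle^{1/2}\,\deg(R)^{-n/2}\,(\phi\,\rho_R, \phi\,\rho_R)^{1/2}.
\end{equation*}
From the integral definition of the energy, $(\phi\,\rho_R, \phi\,\rho_R) \le \|\phi\|_{L^\infty}^2\iint|\log\sup\{\cS,\cS'\}|\,d\rho_R(\cS)\,d\rho_R(\cS')$, and Proposition~\ref{p:holder} implies that the double integral is finite, which furnishes the constant~$C$. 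The main obstacle will be to justify that $\phi\,\rho_R$ actually admits a continuous potential so that $(\phi\,\rho_R, \phi\,\rho_R)$ is well defined and the symmetric energy formula applies: for $\phi$ continuous this follows by dominated convergence from the integrability of $\log\sup\{\cdot,\cdot\}$ against $\rho_R\otimes\rho_R$, while for $\phi \in L^\infty(\rho_R)$ one proceeds by an approximation argument using density of continuous functions in $L^1(\rho_R)$ and the invariance of $\rho_R$ to control the limit on the left-hand side of~\eqref{eq:mixing}.
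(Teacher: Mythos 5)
Your proposal is correct and follows essentially the same route as the paper's proof: reduce to $\int\phi\,d\rho_R = 0$, rewrite the correlation as the energy pairing of $\Delta\psi$ with $\deg(R)^{-n}R^{n*}(\phi\rho_R)$, apply Cauchy--Schwarz, and control the energy of the pullback. The only organisational difference is that you package the paper's direct computation of $(R^{n*}(\phi\rho_R),R^{n*}(\phi\rho_R))$ into the cleaner transfer identity $(R^{n*}\nu,R^{n*}\nu)=\deg(R)^n(\nu,\nu)$, whose short derivation from $R^{n*}\nu=\Delta(g_\nu\circ R^n)$ and $R^n_*(g_\nu\circ R^n)=\deg(R)^n g_\nu$ is exactly what the paper does in line form; and your final bound $(\phi\rho_R,\phi\rho_R)\le\|\phi\|_{L^\infty}^2\iint|\log\sup\{\cS,\cS'\}|\,d\rho_R\otimes d\rho_R$ is equivalent to the paper's $\|g\|_{L^\infty}\|\phi\|_{L^\infty}$ bound, both resting on the integrability of $\log\sup$ against $\rho_R\otimes\rho_R$ (i.e.\ on Proposition~\ref{p:holder}). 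You are right to flag the point about whether $\phi\rho_R$ sits in the class on which the energy pairing is positive definite when $\phi\in L^\infty$ is not continuous; the paper's proof applies the Cauchy--Schwarz inequality at this point without further comment. Two small omissions: you do not spell out the approximation argument that you invoke for this, and you leave out the concluding density step (Stone--Weierstrass on $\cC^1$ observables, then density of continuous functions in $L^2(\rho_R)$) needed to pass from the quantitative estimate to the qualitative mixing statement. Both are minor.
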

La d{\'e}monstration de cette proposition est ci-dessous.  Rappelons
qu'une fonction in\-t{\'e}grable $\psi : \PKber \to \R$ de moyenne nulle
$\int \psi \, d\rho_R=0$ v{\'e}rifie le th{\'e}or{\`e}me limite central
de poids $\sigma>0$, si pour tout intervalle~$I$ de~$\R$ on~a
$$
\lim_{n\to + \infty} \rho_R \left\{ \frac{1}{\sqrt{n}} \sum_{k=0}^{n-1}
\psi \circ R^k \in I \right\} = \frac{1}{\sqrt{2\pi\sigma}} \int_I\exp
\left( - \frac{x^2}{2\sigma^2}\right)\, dx~.
$$
La d{\'e}croissance exponentielle des corr{\'e}lations combin{\'e}e au th{\'e}or{\`e}me
de Gordin-Livera\-ni (voir~\cite{gordin,liverani}) donne facilement le r{\'e}sultat suivant.
\begin{Prop}\label{p:clt}
  Pour toute fonction $\psi : \PKber \to \R$ de classe $\cC^1$ telle que $\int
  \psi\, d\rho_R=0$, le nombre
  $$ -\int\psi^2\, d\rho_R + 2 \sum_{n=0}^\infty \int (\psi\circ R^n) \times \psi \, d\rho_R~,$$
est fini, positif et il est {\'e}gal {\`a}~$0$ si et seulement si la fonction~$\psi$ est un cobord, i.e., s'il existe une fonction mesurable~$\chi$ telle que $\psi = \chi - \chi \circ R$.
De plus, lorsque ce nombre est strictement positif, on note~$\sigma > 0$ sa racine carr{\'e}e positive, et la fonction~$\psi$ v{\'e}rifie le th{\'e}or{\`e}me limite central pour le poids~$\sigma$.
\end{Prop}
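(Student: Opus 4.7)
Le plan est d'appliquer le th{\'e}or{\`e}me de Gordin-Liverani (\cite{gordin,liverani}) en utilisant la d{\'e}croissance exponentielle des corr{\'e}lations {\'e}tablie dans la Proposition~\ref{p:melange}. Fixons une fonction $\psi : \PKber \to \R$ de classe~$\cC^1$ avec $\int \psi\, d\rho_R = 0$, et posons $c_n \= \int (\psi \circ R^n) \times \psi \, d\rho_R$ pour~$n \ge 0$.

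D'abord, je montrerai que la s{\'e}rie $\sigma^2 \= - \int \psi^2\, d\rho_R + 2 \sum_{n \ge 0} c_n$ est absolument convergente. En effet, en appliquant~\eqref{eq:mixing} avec~$\phi = \psi \in L^\infty(\rho_R)$ (puisque~$\psi$ est continue sur~$\PKber$ compact) et en utilisant $\int \psi\, d\rho_R = 0$, on obtient $|c_n| \le C \|\psi\|_{L^\infty} \langle \psi, \psi\rangle^{1/2} \deg(R)^{-n/2}$, ce qui donne la convergence avec taux g{\'e}om{\'e}trique.

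Ensuite, pour voir que $\sigma^2 \ge 0$, on remarque la d{\'e}composition classique
$$
\int \left( \sum_{k=0}^{n-1} \psi \circ R^k \right)^2 d\rho_R = n \int \psi^2 \, d\rho_R + 2 \sum_{k=1}^{n-1}(n-k) c_k ~,
$$
qui, divis{\'e}e par~$n$, tend vers~$\sigma^2$ lorsque $n \to + \infty$, prouvant la positivit{\'e}. Pour l'{\'e}galit{\'e} $\sigma^2 = 0$, l'{\'e}quivalence avec le caract{\`e}re de cobord s'obtient par l'argument standard de martingale: en notant $\cU$ l'op{\'e}rateur de Koopman $\cU \chi \= \chi \circ R$ et $\cL$ son adjoint dans $L^2(\rho_R)$ (l'op{\'e}rateur de transfert, v{\'e}rifiant $\cL\cU = \mathrm{id}$ par invariance et ergodicit{\'e} de~$\rho_R$), on pose $\chi \= \sum_{n \ge 0} \cL^n \psi$. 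La d{\'e}croissance des corr{\'e}lations implique que $\|\cL^n \psi\|_{L^2}$ d{\'e}cro{\^\i}t exponentiellement (via $\int (\cL^n \psi)^2\, d\rho_R = \int (\psi \circ R^n) \cL^n \psi\, d\rho_R$ et l'{\'e}quation~\eqref{eq:mixing} appliqu{\'e}e it{\'e}rativement), donc la s{\'e}rie converge dans~$L^2$, et $\psi - (\chi - \chi \circ R) = \cU \chi - \cL \chi$ est de carr{\'e} d'int{\'e}grale {\'e}gale {\`a}~$\sigma^2$.

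Enfin, pour la validit{\'e} du th{\'e}or{\`e}me limite central lorsque $\sigma > 0$, on applique le th{\'e}or{\`e}me de Gordin-Liverani: l'hypoth{\`e}se cl{\'e} est la sommabilit{\'e} de $\|\cL^n \psi\|_{L^2}$, d{\'e}j{\`a} {\'e}tablie. La difficult{\'e} principale sera de v{\'e}rifier soigneusement l'estim{\'e}e $\|\cL^n \psi\|_{L^2} = O(\deg(R)^{-n/2})$ {\`a} partir de~\eqref{eq:mixing}, en prenant $\phi = \cL^n \psi$ (qui est born{\'e}e puisque $\psi$ l'est et $\|\cL\|_{L^\infty \to L^\infty} \le 1$), ce qui donne une r{\'e}currence permettant de conclure.
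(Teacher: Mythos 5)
Your proposal takes essentially the same route as the paper: reduce to the Gordin--Liverani theorem and verify its summability hypothesis from the exponential mixing estimate~\eqref{eq:mixing}. The only technical divergence is in which norm you sum. The paper shows $\sum_n \|\Lambda^n\psi\|_{L^1(\rho_R)} < \infty$ by duality, writing $\|\Lambda^n\psi\|_{L^1} = \sup_{\|\phi\|_{L^\infty}\le 1}\int (\phi\circ R^n)\psi\,d\rho_R$ and getting the rate $\deg(R)^{-n/2}$ directly; you show $\sum_n \|\cL^n\psi\|_{L^2(\rho_R)} < \infty$ by plugging $\phi = \cL^n\psi$ (which is indeed $L^\infty$ since $\|\cL\|_{L^\infty\to L^\infty}\le 1$) into~\eqref{eq:mixing}, which yields $\|\cL^n\psi\|_{L^2}^2 \le C\deg(R)^{-n/2}$, hence $\|\cL^n\psi\|_{L^2} = O(\deg(R)^{-n/4})$ rather than $O(\deg(R)^{-n/2})$ as you state --- still summable, so Gordin's condition holds. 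Two small slips worth fixing: the identity you invoke should read $\int(\cL^n\psi)^2\,d\rho_R = \int \psi\,\bigl((\cL^n\psi)\circ R^n\bigr)\,d\rho_R$ and not $\int(\psi\circ R^n)\,\cL^n\psi\,d\rho_R$ (the operator $\cL^n$ is not self-adjoint because $\cU$ is a non-unitary isometry); and the ``r\'ecurrence'' you allude to is not needed nor does it obviously sharpen the rate --- the direct estimate suffices. Your additional paragraphs on the positivity of $\sigma^2$ and the coboundary characterization reproduce conclusions that Gordin--Liverani already delivers once the summability hypothesis is verified, so they are redundant but harmless.
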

\begin{proof}
  La preuve est maintenant classique,
  voir~\cite{CanLeB05,DinSib06}. Notons $R^*$
  l'op{\'e}\-ra\-teur de composition agissant sur $L^2(\rho_R)$.
  L'invariance $R_*\rho_R = \rho_R$ implique l'{\'e}galit{\'e} $\int |R^*\psi|^2\,
  d\rho_R = \int |\psi|^2 \, d\rho_R$ donc $R^*$ est une
  isom{\'e}trie. Soit $\Lambda$ l'adjoint de $R^*$.  L'{\'e}qua\-tion
  d'invariance $R^* \rho_R = \deg(R) \rho_R$ montre facilement que
  $\Lambda = \deg(R)^{-1}\, R_*$.
  
D'apr{\`e}s le th{\'e}or{\`e}me de
Gordin-Liverani~\cite{gordin,liverani}, pour montrer qu'une fonction
$\psi \in L^\infty(\rho_R)$ satisfaisant $\int \psi d \rho_R = 0$
v{\'e}rifie les assertions de la proposition, il suffit de v{\'e}rifier
$\sum_{n\ge 0} \| \Lambda^n \psi\|_{L^1(\rho_R)} < + \infty$.
Supposons alors que~$\psi$ est de classe~$\cC^1$ et appliquons la
Proposition~\ref{p:melange}, en utilisant l'hypoth{\`e}se $\int \psi
\, d\rho_R =0$.  On obtient
\begin{multline*}
\int |\Lambda^n\psi | \, d\rho_R
=
\sup_{\| \phi \|_{L^\infty}\le1} \int \Lambda^n\psi \times \phi\, d\rho_R
=
\sup_{\| \phi \|_{L^\infty}\le1} \int \psi \times (\phi \circ R^n) \, d\rho_R
\\ \le
C \times \langle  \psi, \psi \rangle^{1/2} \times \deg(R)^{-n/2}~.
\end{multline*}
On a donc bien la convergence d{\'e}sir{\'e}e.
\end{proof}

\begin{proof}[D{\'e}monstration de la Proposition~\ref{p:melange}]
Quitte {\`a} remplacer~$\phi$ par $\phi - \int \phi~d\rho_R$ on se ram{\`e}ne au cas o{\`u} $\int \phi \, d \rho_R = 0$, en utilisant $\| \phi - \int \phi \, d \rho_R\|_{L^\infty} \le 2 \| \phi \|_{L^\infty}$.
On~a donc les estimations:
\begin{multline*}
\left| \int (\phi \circ R^n) \, \psi\, d\rho_R - \int \phi \,
 d\rho_R\times \int \psi \, d \rho_R \right|
\\=
\left|\int (\phi \circ R^n)  \psi \, d (\deg(R)^{-n} R^{n*} \rho_R) \right|
\\ = 
\deg(R)^{-n} \left|\int R^n_* ((\phi \circ R^n) \psi) \, d \rho_R \right|
=
\deg(R)^{-n} \left|\int \phi R^n_*\psi \, d \rho_R \right|
\\ =
\deg(R)^{-n} \left|\int \psi \, d R^{n*}(\phi \rho_R) \right|
=
\deg(R)^{-n} \times \left( \Delta \psi, \, R^{n*}(\phi \rho_R) \right)
\\ \le 
\deg(R)^{-n}
\times
(\Delta \psi, \Delta \psi)^{1/2}
\times
(R^{n*}(\phi \rho_R) , \, R^{n*}(\phi \rho_R))^{1/2}~.
\end{multline*}
Consid{\'e}rons le potentiel $g : \HK \to \R$, d{\'e}fini par
$$
g(\cS)
=
-\int_{\PKber} \langle \cS , \cS' \rangle_\cScan \,
\phi \, d\rho_R (\cS') ~,
$$
de telle sorte que $\Delta g = \phi \rho_R$. 
On a alors~: 
\begin{multline*}
|(R^{n*}(\phi \rho_R) , \, R^{n*}(\phi \rho_R))|
=
\left| \int g \circ R^n \, d(R^{n*}(\phi \rho_R))\right|
=
\left| \int R^n_* (g \circ R^n) \phi \, d \rho_R \right|
\\ =
\deg(R)^n \times \left| \int g \phi \, d \rho_R \right|
\le
\deg(R)^{n} \times \|g \|_{L^\infty} \times \| \phi \|_{L^\infty}
\\ \le
\deg(R)^{n} \times \left(\int_{\PKber} \langle \cS, \cS' \rangle_{\cScan} d\rho_R(\cS') \right) \times \| \phi \|_{L^\infty}^2 ~.
\end{multline*}
Ceci termine la d{\'e}monstration de~\eqref{eq:mixing}.  Il reste {\`a}
montrer que la mesure~$\rho_R$ est m{\'e}\-langeante.  La preuve
ci-dessus montre que la convergence d{\'e}sir{\'e}e est v{\'e}rifi{\'e}e
lorsque~$\phi$ et~$\psi$ sont de classe $\cC^1$.  On remarque alors
que les fonctions de classe~$\cC^1$ forment une alg{\`e}bre contenant
les fonctions constantes. Cette alg{\`e}bre est dense dans l'ensemble
des fonctions continues (pour la topologie compacte) par le th{\'e}or{\`e}me
de Stone-Weierstrass.  On a donc convergence d{\`e}s que $\phi$ et
$\psi$ sont continues.  Enfin~$\rho_R$ est une mesure
r{\'e}guli{\`e}re dont le support est compact et m{\'e}trisable.  
Par cons{\'e}quent l'ensemble des fonctions
continues d{\'e}finies sur le support de~$\rho_R$ est dense dans
$L^1(\rho_R)$.  Ce qui conclut la preuve.
\end{proof}

\subsection{D{\'e}monstration du Th{\'e}or{\`e}me~\ref{t:equi-periodiques}}
\label{ss:equi-periodiques}
Pour chaque entier $n \ge 1$ on pose $D_n \= \deg(R)^n + \deg(S)$ et
on d{\'e}finit la fonction $A_n : \PKber \to [- \infty, 0]$ par
$$
A_n \= - \langle R^n, S \rangle_{\cScan} ~,
$$
de telle sorte que
$$
\exp A_n = \frac{ \sup \{ R^n, S \} }{ \max \{ 1, | R^n| \} \times \max \{ 1, |S| \} }
$$ sur $\PKber \setminus \{ R^n =\infty \text{ ou } S  = \infty \}$, et $\exp (A_n) \ge
\tfrac{1}{2} \dpk (R^n, S)$ avec {\'e}galit{\'e} sur~$\PK$.
De
$$ A_n(z) = \log|R^n(z) - S(z)| - \log \max \{ 1, |R^n(z)|\} - \log \max \{ 1, |S(z)|\}, $$
on tire
$$
\Delta A_n = [R^n = S] - R^{n*}[\cScan] - S^*[\cScan]~.
$$
Comme dans la preuve de la Proposition-D{\'e}finition~\ref{p:def} on choisit un potentiel~$g$ tel que
$$ \Delta g = (\deg(R))^{-1} R^*[\cScan]- [\cScan], $$
et pour chaque entier $n \ge 1$ on pose
$$ g_n = \sum_{k=0}^{n-1}\deg(R)^{-k} \, g\circ R^k. $$
D'autre part on choisit un potentiel~$h$ tel que
$$ \Delta h = (\deg(S))^{-1} S^*[\cScan]- [\cScan], $$
et pour chaque entier $n \ge 1$ on pose
$$ h_n \= D_n^{-1} (\deg(R^n) g_n + \deg(S) h) ~. $$
On v{\'e}rifie alors qu'on a
$$
\Delta (h_n + D_n^{-1} A_n) = D_n^{-1}[R^n = S] - [\cScan] ~.
$$
Comme dans la preuve de la Proposition-D{\'e}finition~\ref{p:def} on montre que les potentiels~$h_n$ convergent alors uniform\'ement sur~$\PKber$ vers
$$ g_\infty \= \sum_{k=0}^{\infty}\deg(R)^{-k} \, g\circ R^k ~. $$
En particulier, pour tout $n \ge 1$ on~a $h_n + D_n^{-1} A_n \in \cP_+$. Cette suite est de plus uniform\'ement major\'ee car $A_n\le0$. La Proposition~\ref{prop:hartogs} implique
donc qu'il existe une suite strictement croissante d'entiers positifs $\{ n_k
\}_{k \ge 1}$ telle que $D_{n_k}^{-1} A_{n_k}$ converge ponctuellement 
sur~$\HK$ vers une fonction continue $\psi : \HK
\to (- \infty, +\infty)$ qui est, soit la fonction
constante {\'e}gale {\`a}~$-\infty$, soit un potentiel tel que $g_{\infty} + \psi \in \cP_+$.  Dans les deux cas la fonction~$\psi$ s'{\'e}tend de
fa{\c c}on unique {\`a}~$\PKber$, en une fonction continue sur chaque
segment, et semi-continue sup{\'e}rieurement sur~$\PKber$.

Pour finir la d{\'e}monstration du th{\'e}or{\`e}me il suffit donc de montrer que~$\psi$ est constante.
Comme~$\psi$ est continue sur chaque segment de~$\PKber$ et comme $\psi \le 0$ (car $A_n \le 0$), il suffit de montrer~$\psi$ est localement constante sur l'ensemble ouvert $\{ \psi < 0 \}$.
Soit~$U$ un ouvert fondamental dont la fermeture topologique est contenue dans $\{ \psi < 0 \}$.
On montre tout d'abord que $S(\overline{U})$ est contenu dans l'ensemble de quasi-p{\'e}riodicit{\'e}~$\cE_R$ de~$R$.
Lorsque pour chaque~$\varepsilon > 0$ on applique le lemme de Hartogs (Proposition~\ref{prop:hartogs}) \`a $\varphi \equiv - \varepsilon$ et~$\cC \= \overline{U}$, on obtient que l'application~$R^{n_k}$ converge uniform{\'e}ment vers~$S$ sur~$\overline{U}$.
Ceci implique que pour tout~$k$ suffisament grand on a~$R^{n_k}(U) = S(U)$, et par cons{\'e}quent que~$R^{n_{k + 1} - n_k}$ converge vers l'identit{\'e} sur~$S(U)$.
On a donc $S(U) \subset \cE_R$, et en particulier~$\cE_R \neq \emptyset$.

Le Lemme~\ref{lem:non qp} implique alors que la caract{\'e}ristique r{\'e}siduelle de~$K$ est positive, {\'e}gale {\`a} un nombre premier~$p$.
Soit~$Y$ la composante connexe de~$\cE_R$ contenant~$S(U)$.
Apr{\`e}s changement de coordonn{\'e}e on suppose que $\infty \not \in Y$.
Soit~$n \ge 1$ l'entier et $T : \Z_p \times Y \to Y$ l'action donn{\'e}s par le Th{\'e}or{\`e}me~\ref{t:qp}.
Quitte {\`a} prendre une sous-suite on peut supposer que~$R^{n_1}(U) = S(U) \subset Y$, que pour chaque $k \ge 1$ l'entier~$n$ divise $n_k - n_1$, et que $a_k \= n^{-1} (n_k - n_1)$ converge dans~$\Z_p$ vers un certain $w_0 \in \Z_p$ lorsque $k \to + \infty$.
Comme~$R^{n_k}$ converge uniform{\'e}ment vers~$S$ sur~$U$, on conclut qu'on~a $S = T^{w_0} \circ R^{n _1}$ sur~$U$.
Le Th{\'e}or{\`e}me~\ref{t:qp} implique alors qu'on~a
$$ \frac{R^{a_k n + n_1} - S}{a_k - w_0}
= \left(\frac{T^{a_k - w_0} - T^0}{a_k - w_0}\right) \circ S \to T_* \circ S, $$
localement uniform{\'e}ment sur~$U$ lorsque $n \to + \infty$.
On a donc
\begin{multline*}
\psi
=
\lim_{k \to + \infty} D_{n_k}^{-1}A_{n_k}
=
\lim_{k \to + \infty} D_{n_k}^{-1} \left( \log |a_k - w_0| + \log|T_* \circ S| \right)
= \\
\lim_{k \to + \infty} D_{n_k}^{-1} \log |a_k - w_0|
\end{multline*}
ponctuellement sur~$U \cap \HK$.
Ceci montre que~$\psi$ est localement constante sur~$\{ \psi < 0 \}$, et termine la d{\'e}monstration.

\section{Entropie}\label{s:entropie}
%
%
\subsection{Entropie topologique dans les espaces compacts}\label{ss:generalites entropie}
L'espace~$\PKber$ est compact mais non m{\'e}\-trisable en
g{\'e}n{\'e}ral.  Les d\'efinitions et propri\'et\'es
de l'entropie souvent d\'ecrites dans le cadre restreint des espaces m\'etriques (voir~\cite{walters})
restent cependant valables dans notre cadre. Nous renvoyons \`a~\cite{mis} pour plus de d\'etails sur les preuves. Nous rappelons ici bri{\`e}vement
quelques d{\'e}\-fi\-ni\-tions.

Soit $X$ un espace compact et $f: X \to X$ une application continue.
{\'E}tant donn{\'e} un recouvrement
ouvert fini~$\fU$ de~$X$, on note~$N(\fU)$ le nombre minimal d'ouverts
de la famille n{\'e}cessaire pour recouvrir~$X$.  Si $\fV$ est un
autre recouvrement ouvert fini de~$X$, on note
$$ \fU \vee \fV \= \{ U \cap V, \, U \in \fU, \, V \in \fV\}. $$
C'est aussi un recouvrement ouvert fini de~$X$.
\emph{L'entropie topologique} de $f$ not\'ee $\htop(f)$ est alors d\'efinie comme suit:
$$
\htop(f) \= 
\sup_\fU \lim_{n\to \infty} \frac1n \log N \left( \bigvee_{i=0}^{n-1} f^{-i} \fU \right)~.
$$
Le supremum est ici pris sur tous les recouvrements ouverts finis de~$X$.

Si $\rho$ est une mesure de probabilit\'e $f$-invariante $f_*\rho = \rho$, on peut aussi d\'efinir \emph{l'entropie m\'etrique} de $f$ par rapport \`a $\rho$. L'entropie d'une partition $\fA$ finie de $X$ par des ensembles mesurables
est donn\'ee par $H(\fA) = -\sum_{A \in \fA} \rho(A) \log \rho(A)$.
L'entropie de $f$ not\'ee $h_\rho(f)$ est alors: 
$$
h_\rho(f) \=
\sup_\fA \lim_{n\to\infty} \frac1n H(\fA_n)
$$
o\`u $\fA_n = \fA \vee f^{-1} \fA \vee \cdots \vee f^{-n} \fA$ et le supremum est pris sur toutes les partitions de $X$ 
finies par des ensembles mesurables.

On rappelle maintenant le principe variationnel.  
\begin{Prop}[Principe Variationnel]\label{prop:principe variationnel}
Soit $X$ un espace topologique compact et soit $f : X \to X$ une
application continue.  Alors on a $\htop(f) = \sup_\rho h_\rho(f)$,
o{\`u} le supremum est pris sur l'ensemble de toutes les mesures de
probabilit{\'e} invariantes par~$f$.
\end{Prop}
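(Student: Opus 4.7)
The plan is to establish the two inequalities $\sup_\rho h_\rho(f) \le \htop(f)$ and $\htop(f) \le \sup_\rho h_\rho(f)$ separately, following Misiurewicz's classical argument~\cite{mis,walters} adapted to the compact but possibly non-metrizable setting. The key observation is that the combinatorial inequalities in the classical proof are phrased in terms of finite Borel partitions and finite open covers, with the only analytic ingredient being inner regularity of finite Borel measures, which holds automatically on any compact Hausdorff space.

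For the first inequality, fix an $f$-invariant probability measure~$\rho$ and a finite Borel partition $\fA = \{ A_1, \ldots, A_k \}$ of~$X$. Using inner regularity of~$\rho$ and normality of~$X$, given $\e > 0$ choose compact sets $C_i \subset A_i$ with $\sum_i \rho(A_i \setminus C_i) < \e$ and pairwise disjoint open neighborhoods $U_i \supset C_i$, forming the open cover $\fU = \{ X \setminus \bigcup_i C_i,\, U_1, \ldots, U_k \}$. A standard conditional-entropy argument (see~\cite[Theorem~8.6]{walters}) then gives $h_\rho(f,\fA) \le \htop(f,\fU) + C(\e) \le \htop(f) + C(\e)$ where $C(\e) \to 0$ as $\e \to 0$, and taking the supremum over $\fA$ yields the desired bound.

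For the reverse inequality, fix $\eta > 0$ and a finite open cover~$\fV$ of~$X$ satisfying $a(\fV) \= \lim_n \tfrac{1}{n} \log N(\fV_n) \ge \htop(f) - \eta$, where $\fV_n \= \bigvee_{i=0}^{n-1} f^{-i}\fV$. For each~$n$, extract a minimal subcover $\fW_n \subset \fV_n$ of cardinality $N(\fV_n)$, pick one representative in each of its elements to form a finite set~$E_n \subset X$, and consider the empirical measures
$$ \mu_n \= \frac{1}{\#E_n} \sum_{x \in E_n} [x] \qand \nu_n \= \frac{1}{n} \sum_{k=0}^{n-1} f_*^k \mu_n. $$
Sequential compactness of probability measures on~$X$ for the weak-$\ast$ topology yields a subsequence $\{\nu_{n_j}\}$ converging to an $f$-invariant probability measure~$\rho$. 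One then constructs a finite Borel partition~$\fA$ refining~$\fV$ whose atoms have boundaries of $\rho$-measure zero, and Misiurewicz's combinatorial lemma, together with weak-$\ast$ continuity of $H_\cdot(\fA)$ at such~$\rho$, yields $h_\rho(f,\fA) \ge a(\fV) \ge \htop(f) - \eta$. Letting $\eta \to 0$ completes the proof.

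The main obstacle is precisely the construction, in the previous paragraph, of a Borel partition~$\fA$ refining the cover~$\fV$ whose atom-boundaries are $\rho$-null: in the metric setting one simply uses balls of small diameter, but in~$X$ one must instead appeal to normality to shrink each~$V \in \fV$ to an open set $W_V \subset V$ with $\rho(\partial W_V) = 0$. Such $W_V$ exist because any continuous Urysohn function separating a prescribed compact subset of~$V$ from~$X \setminus V$ has at most countably many level sets of positive $\rho$-measure, so almost all of its sub-level sets have null topological boundary. Once such~$\fA$ is available, the remainder of Misiurewicz's argument --- estimating $\liminf_j \tfrac{1}{n_j} H_{\nu_{n_j}}(\fA_{n_j})$ from below by $a(\fV)$ via concavity of entropy and the fact that $\fA$ refines~$\fV$ --- carries over verbatim, and we refer to~\cite{mis} for the detailed combinatorial estimates.
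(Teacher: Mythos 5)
The paper gives no proof of this proposition: it simply records the statement and defers to Misiurewicz~\cite{mis} and Walters~\cite{walters} for the extension of the variational principle from metric to general compact spaces. Your proof reproduces exactly the Misiurewicz argument that those references supply, adapted to the non-metrizable compact Hausdorff setting, so it is consistent with the route the paper intended.

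One caveat worth flagging: your claim that inner regularity of finite Borel measures \emph{holds automatically} on any compact Hausdorff space is an overstatement. On a general compact Hausdorff space there can exist finite Borel measures that are not Radon (and hence not inner regular with respect to compact sets), the classical Dieudonn\'e measure on $\omega_1 + 1$ being the standard example. The first inequality of your argument genuinely requires the test measure $\rho$ to be Radon (equivalently, one should take the supremum over invariant \emph{Radon} probability measures, or restrict the entropy computation to the Baire $\sigma$-algebra, where finite measures on a compact Hausdorff space are automatically regular). In the paper's application this is harmless, since the invariant measures under consideration on $\PKber$ -- in particular $\rho_R$ and any measure arising as a weak limit of empirical measures via Riesz representation, as in your second inequality -- are Radon by construction. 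But the justification should be the Riesz/Radon framework rather than an asserted automaticity of regularity. The remainder of your argument, in particular the Urysohn-level-set construction of a partition refining $\fV$ with $\rho$-null atom boundaries, is correct and is exactly the point where the non-metric case departs from the textbook version; this is handled properly.
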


Rappelons de plus, que l'ensemble des mesures de probabilit\'e invariantes est un convexe compact pour 
la convergence vague dont les points extr\'emaux sont exactement les mesures ergodiques. On montre alors que toute mesure invariante~$\rho$ est \og{} moyenne \fg{} de mesures ergodiques (c'est la d\'ecomposition de Choquet). En particulier, $\htop(f)$ est aussi \'egal au supr\'emum $h_\rho(f)$ pris sur l'ensemble des mesure ergodiques. Nous renvoyons \`a~\cite{walters} pour plus de pr\'ecisions.

%
%
\subsection{D\'emonstration du Th{\'e}or{\`e}me~\ref{t:estimation htop}}\label{ss:estimation htop}
La d{\'e}monstration du Th{\'e}or{\`e}me~\ref{t:estimation htop} s'appuie sur
deux lemmes.
On commence avec la propri{\'e}t{\'e} g{\'e}n{\'e}rale suivante.
\begin{Lem}\label{lem:recouvrement minimal}
Soit~$X$ un espace topologique s{\'e}par{\'e} et connexe, et soit~$\fU$ un recouvrement fini de~$X$ par des ensembles ouverts et connexes, tel que l'ensemble
$$ \partial \fU \= \bigcup_{U \in \fU} \partial U $$
soit fini et non vide.
Alors on a $N(\fU) \le \# \partial \fU$.
\end{Lem}
\begin{proof}
  Pour chaque $x \in \partial \fU$ on choisit un {\'e}l{\'e}ment~$U_x$ de~$\fU$ contenant~$x$.
Il suffit de montrer que pour chaque $x' \in X$ il existe $x \in X$ tel que~$U_x$ contient~$x'$.
Lorsque $x' \in \partial \fU$ il n'y a rien a montrer.
On se ram{\`e}ne alors au cas o{\`u} $x' \not \in \partial \fU$.
Notons~$Y$ la composante connexe de $X \setminus \partial \fU$ contenant~$x'$.
Comme l'ensemble $\partial \fU$ est fini et donc ferm{\'e}, on a $\partial Y \subset \partial \fU$.
Du fait que~$X$ est connexe et~$\partial U$ est non vide, on conclut que l'ensemble~$\partial Y$ est non vide.
Soit $x \in \partial Y$ et notons que~$U_x$ rencontre~$Y$.
Comme~$Y$ est connexe et comme~$\partial U_x$ est disjoint de~$Y$, on a $x' \in Y \subset U_x$.
\end{proof}

\begin{Lem}\label{lem:entropie topologique}
Soit~$R$ une fraction rationnelle de degr{\'e} au moins deux {\`a} coefficients dans~$K$, et soit~$X$ une partie compacte et connexe de~$\PKber$ invariante par~$R$.
Alors on~a
$$ \htop(R|_X) \le \log \sup \{ \# (R|_X)^{-1}(\cS) \mid \cS \in X \}. $$
\end{Lem}
\begin{proof}
On pose,
$$ D \= \sup \{ \# (R|_X)^{-1}(\cS) \mid \cS \in X \}. $$
Au vu du Lemme~\ref{lem:recouvrement minimal} il suffit de montrer que pour tout recouvrement fini~$\fU$ de~$X$ on~a
$$ \limsup_{n \to + \infty} \frac{1}{n} \log \# \partial \left( \bigvee_{j = 0}^{n - 1} (R|_X)^{-j} \fU \right)
\le
\log D.
$$
Soit~$\fU$ un tel recouvrement.
Comme les ouverts fondamentaux forment un base de la topologie de~$\PKber$, on se ram{\`e}ne au cas o{\`u} chaque {\'e}l{\'e}ment de~$\fU$ est l'intersection d'un ouvert fondamentaux de~$\PKber$ avec~$X$.
Alors l'ensemble~$\partial\fU$ est fini.
S'il est vide, alors on a~$X \in \fU$ et alors l'assertion est imm{\'e}diate dans ce cas.
On se ram{\`e}ne alors au cas o{\`u} l'ensemble~$\partial \fU$ n'est pas vide.
L'inclusion
$$ \partial \left( \bigvee_{j = 0}^{n - 1} (R|_X)^{-j} \fU \right)
\subset
\bigcup_{j = 0}^{n - 1} (R|_X)^{-j} \partial \fU, $$
implique qu'on a
$$ \partial \left( \bigvee_{j = 0}^{n - 1} (R|_X)^{-j} \fU \right)
\le
\# \partial \fU \times (1 + D + \cdots + D^{n - 1})
\le
\# \partial \fU \times n D^{n - 1}, $$
d'o{\`u} l'on d{\'e}duit l'in{\'e}galit{\'e} d{\'e}sir{\'e}e.
\end{proof}

\begin{proof}[D\'emonstration du Th{\'e}or{\`e}me~\ref{t:estimation htop}]
L'in{\'e}galit{\'e} $h_{\rho_R}(R) \le \htop(R)$ est cons{\'e}\-quence du principe variationnel (ou Proposi\-tion~\ref{prop:principe variationnel}).
L'in{\'e}galit{\'e} $\htop(R) \le \log \degtop(R)$ est donn{\'e}e par le
Lem\-me~\ref{lem:entropie topologique} avec $X = \PKber$.

Il reste {\`a} montrer l'{\'e}galit{\'e} $\htop(R) = \htop(R|_{J_R})$.
Au vu du principe variationnel, il suffit de montrer que pour toute mesure de probabilit\'e~$\rho$ invariante par~$R$ et ne chargeant pas $J_R$, on a~$h_\rho(R) = 0$.
On se ram{\`e}me au cas o{\`u}~$\rho$ est ergodique.
Le th{\'e}or{\`e}me de r{\'e}currence de Poincar{\'e} montre le support topologique de~$\rho$ est disjoint de l'ouvert des points errants.
En particulier,~$\rho$ ne charge pas les composantes de Fatou errantes, ou strictement pr{\'e}p{\'e}riodiques.
Comme~$\rho$ est de plus ergodique, on peut trouver une composante connexe~$U$ de~$F_R$ p{\'e}riodique et un entier~$N \ge 1$ tel que $R^N(U) = U$ et $\rho \left( \bigcup_{k = 0}^{N - 1} R^k(U)\right) = 1$.
  
Si~$U$ est le bassin d'attraction imm{\'e}diat d'un point p{\'e}riodique
attractif, alors~$\rho$ charge ce point et est donc {\`a} support fini par ergodicit\'e. Son entropie
est donc nulle.
Sinon, la Proposition~\ref{p:classification} implique que~$R^N$ est injective sur~$U$, et alors~$R^N$ est injective sur~$X \= \overline{U}$.
Comme~$X$ est un sous-ensemble compact et connexe de~$\PKber$ invariant par~$R^N$, le Lemme~\ref{lem:entropie topologique} appliqu{\'e} {\`a}~$R^n$ au lieu de~$R$, implique que~$\htop(R^n|_X) = 0$.
Le principe variationnel nous donne alors~$h_{\rho}(R) = \frac{1}{N} h_{\rho|_U}(R^N) = 0$.
\end{proof}
%
%
\subsection{Preuve des Th{\'e}or{\`e}mes~\ref{t:estimation h metrique} 
et~\ref{t:htopzero}}\label{ss:estimation h metrique} La preuve du
Th{\'e}or{\`e}me~\ref{t:estimation h metrique} s'appuie sur le lemme
suivant.  Rappelons que pour une mesure de probabilit\'e~$\rho$ sur~$\PKber$ le
\textit{Jacobien} $\Jac_{\rho} : \PKber \to [0, + \infty]$ de~$R$
pour~$\rho$ est la fonction mesurable caract{\'e}ris{\'e}e, en dehors d'un ensemble de mesure zero, par la propri{\'e}t{\'e}
suivante.  Pour tout ensemble bor{\'e}lien~$E$ sur lequel~$R$ est
injective, on a $\rho(R(E)) = \int_E \Jac_{\rho}\, d\rho$. Une telle fonction
existe toujours dans notre cas.
Nous renvoyons \`a~\cite[\S 1.9]{PU} ou~\cite{parry} pour plus de d\'etails.

\begin{Lem}\label{L-calcul-jac}
\

\begin{enumerate}
\item[1.]  Soit~$E$ un sous ensemble bor{\'e}lien de~$\PKber$ et
soit~$\mathbf{1}_E$ la fonction caract{\'e}ris\-tique de~$E$.  Alors on
a,
\begin{equation}\label{e:star}
\int R_* \mathbf{1}_E \, d\rho_R
=
\deg(R) \int \mathbf{1}_E\, d\rho_R~.
\end{equation}
\item[2.]  On a $$\Jac_{\rho_R}(\cS) = \deg(R)/\deg_R(\cS).$$
\end{enumerate}
\end{Lem}
\begin{proof}[D\'emonstration du Lemme~\ref{L-calcul-jac}]
\

\noindent\textbf{1.}
Nous remercions le rapporteur de nous avoir fourni la preuve suivante.
Notons d'abord que l'{\'e}quation $R^*
\rho_R = \deg(R) \, \rho_R$ se traduit par l'{\'e}galit{\'e}
\begin{equation}\label{e:star lisse}
\int R_*\phi\, d\rho_R
=
\deg(R) \int \phi\, d\rho_R
\end{equation}
pour toute fonction continue~$\phi$.  Par convergence domin\'ee, on a~\eqref{e:star} pour tout bor\'elien~$E$
tel qu'il existe une suite de fonctions continues $\varphi_n : \PKber \to [0,1]$ telle que $\varphi_n \to \mathbf{1}_E$ ponctuellement.
Soit~$\cF$ la collection de tous ces ensembles bor{\'e}liens.
Il est clair que $\cF$ est stable par intersection finie (prendre le produit des approximants continus), et par compl\'ementaire (prendre $1-\varphi_n$).
Il est aussi stable par intersection d\'enombrable par convergence monotone.
On conclut en remarquant que~$\cF$ contient toutes les boules ferm{\'e}es de~$\PKber$.
\smallskip

\noindent\textbf{2.}
Soit  $E\subset \PKber$ un ensemble bor{\'e}lien tel que $R: E \to
R(E)$ soit bijective.
Pour chaque $k = 1, \ldots, \deg(R)$, notons $E_k = \{ \cS \in E, \, \deg_R (\cS) =k \}$.
De~\eqref{e:star}, on tire
$$
\deg(R) \times \rho_R(E_k)
=
\int R_* \mathbf{1}_{E_k}\, d\rho_R
= k \times \rho_R (R(E_k))~,
$$
d'o{\`u},
$$
\rho_R( R(E))
=
\sum_{k=1}^{\deg(R)} \rho_R (R(E_k))
= 
\int_E \frac{\deg(R)}{\deg_R(\cS)}\, d\rho_R(\cS)~.$$
\end{proof}

\begin{proof}[D{\'e}monstration du Th{\'e}or{\`e}me~\ref{t:estimation h metrique}]
  La preuve de l'estimation~\eqref{e:hmetr} est une cons{\'e}quence de la
  formule de Rokhlin~\cite[\S 1.9]{PU} ou~\cite[\S10]{parry} 
qui s'{\'e}nonce sous la forme suivante:
\begin{equation}\label{e-rohlin}
 h_{\rho_R}(R) \ge \int \log|\Jac_{\rho_R}(\cS)|\, d\rho_R(\cS)~.
\end{equation}
Cette formule s'applique dans notre contexte car~$R$ est {\`a} fibres
finies.  Il est alors clair que l'{\'e}quation~\eqref{e:hmetr}
d{\'e}coule de~\eqref{e-rohlin}, de la partie~2 du
Lemme~\ref{L-calcul-jac}, et de la d{\'e}finition du degr{\'e} moyen.

Supposons maintenant que $\rho_R$ ne charge pas $\HK$.  Comme le
degr{\'e} local de~$R$ est constant sur~$\PK$ {\'e}gal
{\`a}~$\deg(R)/\degtop(R)$, hors d'un ensemble fini, la
Proposition~\ref{p:holder} implique $\mdeg = \deg(R) / \degtop(R)$.
Les {\'e}galit{\'e}s d{\'e}sir{\'e}es d{\'e}coulent alors de l'estimation
$h_{\rho_R}(R) \ge \log \frac{\deg(R)}{\mdeg(R)}$.
\end{proof}

\begin{proof}[D{\'e}monstration du Th{\'e}or{\`e}me~\ref{t:htopzero}]
On montre tout  d'abord l'{\'e}quivalence de~(1), (2), (3), et~(4).
L'implication (1)$\Rightarrow$(2) est une cons{\'e}quence imm{\'e}diate du
principe variationnel (Proposition~\ref{prop:principe variationnel}).
Pour montrer l'implication~(2)$\Rightarrow$(3) supposons que
$\phi\circ R \circ \phi^{-1}$ n'ait bonne r{\'e}duction pour
aucune application de M{\"o}bius $\phi$.  Quitte {\`a} remplacer~$R$ par
un it{\'e}r{\'e} assez grand on se ram{\`e}ne au cas o{\`u} pour tout $\cS
\in J_R$ on a $\deg_R(\cS) \le \deg(R) - 1$, voir
Lemme~\ref{L-estim-deg}. Comme $\rho_R$ est une mesure
support{\'e}e dans l'ensemble de Julia, on a
$$
\log \mdeg(R)
=
\int \log \deg_R(\cS)\, d\rho_R(\cS) \le \log (\deg(R) - 1)
<
\log \deg(R)~,
$$ et le Th{\'e}or{\`e}me~\ref{t:estimation h metrique} implique alors
$h_{\rho_R}(R)>0$.  Pour montrer l'implication (3)$\Rightarrow$(4),
notons d'abord que si~$R$ a bonne r{\'e}duction, alors le point
$\cScan\in\HK$ associ{\'e} {\`a} la boule unit{\'e} est totalement
invariant.  On a donc $R^*[\cScan] = \deg(R) [\cScan]$, et par
d{\'e}finition de la mesure d'{\'e}quilibre, on a $\rho_R \= \lim_{n \to
+ \infty} \deg(R)^{-n} R^{n*} [\cScan] = [\cScan]$.  Notons finalement
que l'implication (4)$\Rightarrow$(1) est imm{\'e}diate.

Comme l'implication (4)$\Rightarrow$(5) est imm{\'e}diate, pour
compl{\'e}ter la preuve du th{\'e}or{\`e}me il suffit de montrer
l'implication (5)$\Rightarrow$(3).  Supposons qu'il existe $\cS
\in \PKber$ tel que~$\rho_R\{\cS\} >0$.  La Proposition~\ref{p:holder}
implique $\cS \in \HK$.  On tire de l'{\'e}quation
d'invariance~\eqref{eq-inv2} que pour tout point~$\cS'$ dans la grande
orbite
$$ \mathcal{G} \= \bigcup_{n \ge 0, m \ge 0} R^{-m} (R^n\{\cS\}) $$
de~$\cS$, on a $\rho_R \{\cS'\} >0$.
La masse de $\rho_R$ \'etant finie, quitte {\`a} remplacer $\cS$ par
un autre {\'e}l{\'e}ment de $\mathcal{G}$, on peut supposer $\rho_R\{\cS\}
\ge \rho_R\{\cS'\}$ pour tout point $\cS' \in\mathcal{G}$.  

On a alors
$$ \rho_R \{ \cS \} = \frac{\deg_R(\cS)}{\deg(R)}\, \rho_R\{R(\cS)\}\le
\rho_R\{ R(\cS)\} \le 
\rho_R\{\cS\}~,
$$ d'o{\`u} $\rho_R(\cS) = \rho_R(R(\cS))$, $\deg_R(\cS) = \deg(R)$,
et
$R^{-1}(R(\cS)) = \{ \cS \}$.  Par le m\^eme argument $\rho_R(R^n(\cS))= \rho_R(\cS)$ pour tout
$n\ge0$. La masse de $\rho_R$ {\'e}tant finie, on
en d{\'e}duit que la grande orbite~$\mathcal{G}$ est finie.  La
Proposition~\ref{prop:bonne} montre alors que~$R$ est conjugu{\'e}e
{\`a} une fraction rationnelle ayant bonne r{\'e}duction.
\end{proof}

%
%
%
%

\section{Exemples}\label{sec:exemple}
Nous donnons dans cette section quelques exemples significatifs afin
d'{\'e}clairer les diff{\'e}rences entre les contextes complexe et
non archim{\'e}dien.

\subsection{Exemples de Latt{\`e}s}
Les exemples que nous allons d{\'e}crire
s'inspirent directement de leurs analogues complexes
(voir~\cite{milnor} pour une discussion d{\'e}taill{\'e}e dans ce
cadre).  Afin de simplifier la discussion nous supposerons que 
la caract{\'e}ristique de $K$ est diff{\'e}rente de $2$ bien que ce ne soit pas
strictement n{\'e}cessaire. Rappelons 
de plus que~$K$ est toujours suppos{\'e}
alg{\'e}briquement clos et complet.

Soit $q\in K$ satisfaisant $0 < |q| <1$. Le quotient $K^* / q^{\Z}$ du groupe multiplicatif $K^*$  
est une courbe elliptique
$E_q$ dont le $j$-invariant v{\'e}rifie $|j(E_q)|> 1$.
R{\'e}ciproquement, toute courbe elliptique dont le $j$-invariant est de module strictement plus grand que~1 peut {\^e}tre obtenue ainsi, voir~\cite[II, \S6, Theorem~6.1]{Sil94} ou~\cite[\S 6]{roquette}.
Pour chaque entier $m \in \Z$, l'endomorphisme $z \mapsto z^m$ sur $K^*$ passe au quotient et induit 
l'endomorphisme $[m]$ de multiplication par $m$ sur $E_q$.

Le quotient de $E_q$ par l'involution $[-1]$ est isomorphe {\`a} $\PK$. Par exemple si $E_q$ est donn{\'e} par une {\'e}quation de Weierstrass dans le plan $y^2 = x^3 + a_4 x + a_6$, alors $[-1]$ est le morphisme $(x,y) \to (x,-y)$ et l'isomorphisme $E_q/[-1] \to\PK$ est induit par la projection $(x,y) \mapsto x$.
Comme $[m]$ commute avec $[-1]$, l'endomorphisme $[m]$ d{\'e}finit une fraction rationnelle $L_{m,q}$ sur $\PK$ de type Latt{\`e}s, voir Remarque~\ref{rem:lattes}.

Pour tout $t$ r{\'e}el, notons $\cS(t)$ le point de $\HK$ associ{\'e} {\`a} la boule de centre $0$ et de rayon $\exp(t)$.
\begin{Prop}\label{p:lattes}
La fraction rationnelle $L_{m,q}$ est de degr{\'e} $m^2$ et
son ensemble de Julia $J_{m,q}$ est un segment ferm{\'e}  inclus dans $\HK$ et  non r{\'e}duit {\`a} un point.
De plus, si pour tout $i = 0, \ldots, 2m - 1$ et $t \in [-\frac{i}{2m}\log|q| , -\frac{i+1}{2m}\log |q|]$ on pose
$$
L(t) = 
\begin{cases}
m t + \frac{i}{2} \log|q| & \text{ si } i \text{ est pair} \\
-mt - \frac{i+1}{2} \log |q| & \text{ si } i \text{ est impair}
\end{cases}
$$
dans des coordonn{\'e}es convenables, on a
$$
J_{m,q} = \{ \cS(t), t\in [ 0, -2^{-1}\log|q|]\}
$$
et $L_{m,q} \circ \cS = \cS \circ L$ sur $[ 0, -2^{-1}\log|q|]$.
En particulier, $\htop(L_{m,q})  = \log m$.
\end{Prop}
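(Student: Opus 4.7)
Le plan est d'utiliser l'uniformisation de Tate pour ramener l'\'etude de $L_{m,q}$ \`a celle du morphisme \'el\'ementaire $F(z) = z^m$ sur $K^*$. Notons $\pi : K^* \to E_q$ le rev\^etement de Tate et $p : E_q \to E_q/[-1] \simeq \PK$ le quotient par l'involution. Par construction on a $p \circ \pi \circ F = L_{m,q} \circ p \circ \pi$. Comme $[m]$ est de degr\'e $m^2$ sur la courbe elliptique $E_q$ (voir~\cite[III, Theorem~6.2]{Sil94}) et que $p$ est de degr\'e $2$, on obtient imm\'ediatement $\deg(L_{m,q}) = m^2$.

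Je rel\`eve ensuite le probl\`eme au niveau de Berkovich. L'action de $F$ s'\'etend \`a $\mathsf{A}^1_K\setminus\{0\}$ et envoie $\cS(t)$ sur $\cS(mt)$ pour tout $t\in\R$. L'involution $[-1]:z\mapsto z^{-1}$ induit l'involution $t\mapsto -t$ sur la droite r\'eelle $\Sigma \= \{\cS(t), t\in\R\}\subset\HK$, et la multiplication par $q$ sur $K^*$ agit par la translation $t\mapsto t+\log|q|$. Le squelette $\Sigma_q$ de $E_q^{\mathrm{an}}$ est donc le cercle $\Sigma/q^{\Z}$ de longueur $-\log|q|$, et son image dans $\PKber$ par la projection naturelle est un segment $I$ de longueur $-\tfrac12\log|q|$, les points fixes modulo $\log|q|\Z$ de l'involution \'etant $t\equiv 0$ et $t\equiv -\tfrac12\log|q|$. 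Dans la param\'etrisation $t\mapsto \cS(t)$, le segment $I$ s'identifie alors \`a $[0,-\tfrac12\log|q|]$, et la dynamique induite est conjugu\'ee \`a celle de la multiplication par $m$ sur $\Sigma_q$, suivie du pli; un calcul direct donne exactement la formule annonc\'ee pour $L$.

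Pour identifier $I$ avec $J_{m,q}$, je note que $I$ est compact, non vide, enti\`erement contenu dans $\HK$ donc disjoint de l'ensemble exceptionnel de $L_{m,q}$, et totalement invariant. La caract\'erisation de l'ensemble de Julia (Proposition~\ref{prop:generalites Julia}) entra\^ine $J_{m,q} \subset I$. R\'eciproquement, chaque composante connexe $U$ de $\PKber\setminus I$ se rel\`eve par $p\circ\pi$ en un disque ouvert de $K^{*,\mathrm{an}}$ sur lequel une it\'er\'ee de $F$ agit comme un morphisme s'approchant radialement du squelette, et l'on en d\'eduit que l'orbite positive de $U$ s'accumule sur $I$ sans l'atteindre, ce qui garantit que $U$ est inclus dans l'ensemble de Fatou. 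D'o\`u l'\'egalit\'e $I = J_{m,q}$.

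Enfin, la restriction de $L_{m,q}$ \`a $J_{m,q}$ est conjugu\'ee, via $t\mapsto\cS(t)$, \`a une application continue $L:[0,-\tfrac12\log|q|]\to[0,-\tfrac12\log|q|]$ lin\'eaire par morceaux avec $m$ branches monotones de pente $\pm m$. L'it\'er\'ee $L^n$ poss\`ede $m^n$ branches monotones maximales, donc par la formule de Misiurewicz-Szlenk on a $\htop(L)=\log m$; le Th\'eor\`eme~\ref{t:estimation htop} donne alors $\htop(L_{m,q}) = \htop(L_{m,q}|_{J_{m,q}}) = \log m$. Le point le plus d\'elicat de la preuve est la v\'erification que le compl\'ementaire de $I$ dans $\PKber$ est enti\`erement contenu dans l'ensemble de Fatou, ce qui repose sur une description pr\'ecise des composantes connexes de $K^{*,\mathrm{an}}\setminus\Sigma$ et de leur image dans $\PKber$.
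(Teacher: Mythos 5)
Your overall strategy is the same as the paper's: pass through the Berkovich analytification $\mathsf{A}^* = \PKber\setminus\{0,\infty\}$ of $K^*$, quotient by the group generated by $z\mapsto qz$ and $z\mapsto z^{-1}$, observe that the line $\Sigma$ joining $0$ to $\infty$ is totally invariant under $z\mapsto z^m$, and thereby realize $J_{m,q}$ as the image of $\Sigma$ in the quotient segment; the explicit tent formula for $L$ then follows by pushing down the action $\cS(t)\mapsto\cS(mt)$. The degree computation and the entropy computation via the $m$ full branches of the interval map are also as in the paper (which leaves the $\log m$ claim implicit).

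The one place your argument takes a genuinely different route is the inclusion $I\subset J_{m,q}$, and there you should be careful. The paper does not try to show that the complement of $I$ is contained in the Fatou set; it instead observes that the conjugacy with the full tent map $L$ (slopes $\pm m$) forces $I$ to contain no proper compact nonempty totally invariant subset, so the minimality characterization of the Julia set in Proposition~\ref{prop:generalites Julia} gives equality at once. Your proposal, by contrast, argues that each component $U$ of $\PKber\setminus I$ is Fatou, and the justification you offer --- that an iterate of $F$ on the lift of $U$ \og{}s'approche radialement du squelette\fg{} --- is incorrect: $z\mapsto z^m$ (and its descent $[m]$) preserves the $\dhyp$-distance to $\Sigma$, it does not contract it, so the orbit of $U$ does not accumulate on $I$ in any radial sense. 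Fortunately the conclusion can be rescued by a much simpler and cleaner observation you did not make: since $I$ is totally invariant, $R^{-1}(I)=I$ forces $R(\PKber\setminus I)\subset \PKber\setminus I$; hence for any $\cS\notin I$ the component $V$ of $\PKber\setminus I$ containing $\cS$ is a neighborhood of $\cS$ disjoint from $E_R$ with $\bigcup_{n\ge0}R^n(V)\subset \PKber\setminus I\subsetneq \PKber\setminus E_R$, and $\cS\in F_R$ by Definition~\ref{d:julia}. Replace the hand-wave about radial approach by either this argument or the paper's minimality argument, and the proof is correct.
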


\begin{Rem}\label{rem:lattes}
Une fraction rationnelle $R\in K(z)$ est dite de Latt{\`e}s s'il existe une courbe elliptique $E$, un endomorphisme $g: E \to E$ et un morphisme fini $\pi: E \to \PK$ tel que $ R \circ \pi = \pi \circ g$. Lorsque $|j(E)| \le 1$, alors $R$ est conjugu\'ee \`a une fraction rationnelle ayant bonne r{\'e}duction. Sur une courbe pour laquelle $\mathrm{End}(E) \not\simeq \Z$, on peut aussi prendre pour $g$ un endomorphisme de degr{\'e} plus grand ou {\'e}gal {\`a}~2 qui n'est pas la multiplication par un entier. Dans ce cas l'invariant $j(E)$ est un entier alg{\'e}brique~\cite[\S 6]{roquette} et par cons{\'e}quent les exemples obtenus ont bonne r{\'e}duction.
\end{Rem}

\begin{Rem}
Dans le cas complexe, une application de Latt{\`e}s est caract{\'e}ris{\'e}e par le fait que sa mesure
d'{\'e}quilibre est absolument continue par rapport {\`a} la mesure de Lebesgue, voir~\cite{Zdu90,May02} et aussi~\cite{BerDup05} pour une g{\'e}n{\'e}ralisation aux dimensions sup{\'e}rieures.
Ce type de r{\'e}sultat n'est pas transposable tel quel dans le cadre non archim{\'e}dien.
En effet une petite perturbation de $L_{m,q}$  ne change pas l'action sur un compact fix{\'e} de $(\HK,\dhyp)$, et par suite toute fraction de degr{\'e} $m^2$ suffisament proche de $L_{m,q}$
poss{\`e}de le m{\^e}me ensemble de Julia et la m{\^e}me action sur cet ensemble.
\end{Rem}

\begin{Rem}
La loi d'addition sur une courbe elliptique est explicite. On peut ainsi obtenir facilement une formule pour les fractions $L_{m,q}$ avec $m$ petit. On trouve par exemple $L_{2,q}(T)= \frac{(T^2-\la)^2}{4T (T-1)(T-\la)}$, pour un $\la \in \C \setminus \{ 0,1 \}$ d\'ependant de $q$. Nous renvoyons
par exemple \`a~\cite[\S 5]{milnor2}.
\end{Rem}

\begin{proof}[D\'emonstration de la Proposition~\ref{p:lattes}]
Notons $\mathsf{A}^* = \PKber\setminus \{ 0, \infty\}$. C'est aussi l'espace analytique de Berkovich associ{\'e} au groupe multiplicatif $K^*$. Notons $\mathcal{L}\subset \mathsf{A}^*$ le segment ouvert joignant $0$ {\`a} $\infty$ dans $\PKber$.  Soit $G$ le sous-groupe d'automorphismes de $\mathsf{A}^*$ engendr{\'e} par les morphismes $z \mapsto qz$ et  $z \mapsto z^{-1}$.  Le quotient $\mathsf{A}^*/G$ est isomorphe {\`a} $\PKber$. Par ailleurs,~$G$  pr{\'e}serve $\mathcal{L}$, et le quotient topologique de~$\mathcal{L}$ par~$G$ est un segment $J \subset \HK$. On remarque maintenant que~$\mathcal{L}$ est totalement invariant par le morphisme $z \mapsto z^m$, donc~$J$ est aussi totalement invariant par~$L_{m,q}$. Il est facile de voir que~$J$ ne contient pas de sous-ensemble compact strict totalement invariant, donc~$J$ est l'ensemble de Julia de~$L_{m,q}$ par la Proposition~\ref{prop:generalites Julia}.
La formule explicite de l'action de~$L_{m,q}$ sur son ensemble de Julia r{\'e}sulte de l'action de $z \mapsto z^m$ sur~$\mathcal{L}$ par passage au quotient.
\end{proof}


\subsection{La mesure d'{\'e}quilibre n'est pas d'entropie maximale}\label{ss:contreexample}
Dans cette section on donne quelques exemples robustes de fractions rationnelles dont la mesure d'{\'e}quilibre n'est pas d'entropie maximale.

{\'E}tant donn{\'e} un segment~$I$ de~$\HK$ et une partie~$D$ de~$I$, on dira
qu'une application continue $T : D \to I$ est \textit{affine par
  morceaux} s'il existe~$k \ge 1$ et des segments ferm{\'e}s~$I_1, \ldots,
I_k$ de~$\HK$, dont la r{\'e}union est {\'e}gale {\`a}~$D$, et tels que pour chaque
$j = 1, \ldots, k$ l'application~$T : I_j \to I$ soit affine par
rapport {\`a} la distance~$\dhyp$.  Si de plus $D \subset I$, $k \ge 2$,
et pour chaque $j = 1, \ldots, k$ l'application~$T : I_j \to I$ est
bijective, alors on dira que~$T$ est \textit{Bernoulli} (voir Figure~\ref{f:contreexemple connexe} pour un exemple).  
Dans ce
cas l'entier~$k$ et les segments~$I_1, \ldots, I_k$ sont uniquement
d{\'e}termin{\'e}s par~$T$.  On appelle~$k$ le \textit{degr{\'e} topologique
  de~$T$}.

Soit~$T : D \to I$ une application affine par morceaux et Bernoulli,
$k$ un entier et~$I_1, \ldots, I_k$ comme ci-dessus.  L'ensemble invariant
maximal de~$T$, que l'on noter
$$ J_T \= \bigcap_{n \ge 0} T^{-n}(I), $$
est alors l'unique partie compacte de~$I$ compl{\`e}tement invariante par~$T$.
Notons de plus que l'entropie topologique de~$T|_{J_T}$ est {\'e}gale
{\`a}~$\log k$, et que les segments~$I_1, \ldots, I_k$ forment une partition
g{\'e}n{\'e}ratrice finie\footnote{i.e. Pour chaque suite~$\{ \sigma(j) \}_{j \ge 1}$ d'{\'e}l{\'e}ments de $\{ 1, \ldots, k \}$ l'ensemble $\bigcap_{j\ge 0} T^{-j} I_{\sigma(j)}$ est r{\'e}duit~{\`a} un point} pour~$T$.
Par cons{\'e}quent la formule de Rokhlin s'applique, et donne que pour toute mesure~$\rho$ invariante par~$T$, on a
$$h_\rho(T|_{J_T}) = \int \Jac_\rho \, d\rho~.$$
Nous renvoyons \`a nouveau \`a~\cite[\S 1.9]{PU} ou~\cite[\S 10]{parry} pour plus de pr\'ecisions.
\begin{Lem}\label{l:Bernoulli}
  Soit $R$ une fraction rationnelle {\`a} coefficients dans~$K$.
  Supposons qu'il existe un segment~$I$ de~$\HK$ et une partie~$D$
  de~$I$ telle que $R : D \to I$ soit une application affine par
  morceaux et Bernoulli.  Supposons de plus que, si l'on note
  par~$k$ le degr{\'e} topologique de cette application, alors les pentes
  correspondantes $d_1, \ldots, d_k$ satisfont~$\sum_{j = 1}^k d_j =
  \deg(R)$.  Alors l'ensemble de Julia de~$R$ est {\'e}gal {\`a} l'ensemble
  invariant maximal de~$R : D \to I$, l'entropie topologique de~$R$
  est {\'e}gale {\`a} $\log k$, et on~a
$$
h_{\rho_R}(R)
=
\sum_{j = 1}^k \frac{d_j}{\deg(R)} \log \left( \frac{d_j}{\deg(R)} \right)
\le
\htop(R)~,
$$
avec {\'e}galit{\'e} si et seulement si~$d_1 = \cdots = d_k$.
\end{Lem}

La d{\'e}monstration de ce lemme est ci-dessous.  {\'E}tant donn{\'e} un entier~$d
\ge 5$ et $a \in K$ satisfaisant~$|a| \in (0, 1)$, soit $R_0(z) =
\frac{z^{d - 2}}{1 + (az)^d}$. 
Montrons
l'in{\'e}galit{\'e}~\eqref{e:contreexample}.  
Soit~$\cS(t)$ le point dans~$\HK$ associ{\'e} {\`a} la boule $\{z \in \PK, \,
\log |z| \le t \}$.  On v{\'e}rifie alors que pour chaque $t \in \R$ on~a
$$
R_0(\cS(t))
=
\begin{cases}
\cS((d - 2) t) & \text{si } t \le - \log|a| ~;
\\
\cS( - d \log|a| - 2t) & \text{si } t \ge - \log|a| ~.
\end{cases}
$$
Si l'on pose $I \= \{ \cS(t), \, t \in [0,  - \tfrac{d}{2} \log |a|] \}$, et,
$$
I_1
\=
\{ \cS(t), \, t \in [0, - \tfrac{d}{2(d - 2)} \log |a|] \} ~, \,
I_2
\=
\{ \cS(t), \, t \in [- \tfrac{d}{4} \log |a|, - \tfrac{d}{2} \log |a|] \} ~,
$$
alors les applications $R_0 : I_1 \to I$ et $R_0 : I_2 \to I$ sont
bijectives et affines de pente~$d - 2$ et $2$, respectivement.
L'application~$R_0 : I_1 \cup I_2 \to I$ est donc affine par morceaux
et Bernoulli, et le lemme implique alors~\eqref{e:contreexample}.
Notons de plus que, comme l'ensemble~$I_1 \cup I_2$ est born{\'e}
dans~$\HK$ par rapport {\`a} la m{\'e}trique~$\dhyp$, il existe un
voisinage~$\cU_d$ de~$R_0$ dans l'espace des fractions rationnelles de
degr{\'e}~$d$, tel que toute~$R \in \cU_d$ co{\"\i}ncide avec~$R_0$ sur~$I_1
\cup I_2$.  Le lemme s'applique alors {\`a} chaque {\'e}l{\'e}ment~$R$ de~$\cU_d$,
et on obtient~\eqref{e:contreexample} lorsqu'on remplace~$R_0$
par~$R$.

Plus g{\'e}n{\'e}ralement, soient $k \ge 2$ et $d_1, \ldots , d_k >1$ des
entiers, et soient $a_2, \ldots, a_k \in K^*$ tels que $|a_k| > \cdots > |a_2| > 0$.  On pose $\de_1 = d_1$, $t_{k + 1} = + \infty \in
\overline{\R}$, et pour chaque $j \in \{ 2, \ldots, k \}$, 
$\de_j = d_j + d_{j-1}$ et ~$t_j = - \ln |a_j|$.  Finalement, on d{\'e}signe
par~$T : \R \to \R$ l'unique application continue {\'e}gale {\`a} $t \mapsto
d_1 t$ sur $(- \infty, t_2)$, et pour chaque $j \in \{ 2, \ldots, k
\}$, affine de pente~$(-1)^{j} d_j$ sur~$[t_j, t_{j + 1}]$.  Alors on
v{\'e}rifie que la fraction rationnelle
\begin{equation}\label{e:general}
R(z) \= z^{d_1} \prod_{j=2}^k \left( 1 + ( a_jz)^{\de_j}\right)^{(-1)^j}~,
\end{equation}
est de degr{\'e} $\sum_{j = 1}^k d_j$, et que pour tout~$t \in \R$ on a
$R(\cS(t)) = \cS(T(t))$.  Il est alors facile de voir que
lorsque~$\sum_{j = 1}^{k} d_j^{-1} \le 1$, on peut choisir $a_2,
\ldots, a_k$ de telle sorte qu'il existe un intervalle compact~$J$
de~$\R$, pour lequel l'application $R : \{ \cS(t), \, T(t) \in J \}
\to \{ \cS(t), \, t \in J \}$ soit Bernoulli de degr{\'e}~$k$, avec
pentes~$d_1, \ldots, d_k$.  Le lemme implique alors que l'ensemble de
Julia de~$R$ est {\'e}gal {\`a} l'image par l'application~$\cS : \R \to (0,
\infty) \subset \HK$ de l'ensemble invariant maximal de~$T$ dans~$J$.
Notons en particulier que l'ensemble de Julia de~$R$ est contenu dans
un segment compact de~$\HK$, et qu'il est un segment (resp. un
ensemble de Cantor) lorsque $\sum_{j = 1}^{k} d_j^{-1} = 1$ (resp.
$\sum_{j = 1}^{k} d_j^{-1} < 1$).  Notons d'autre part, que lorsque
les entiers $d_1, \ldots, d_k$ ne sont pas tous {\'e}gaux, le degr{\'e} de la
fraction rationnelle~\eqref{e:general} est au moins~$5$.  Si de plus
l'ensemble de Julia de cette fraction rationnelle est un intervalle,
c'est-{\`a}-dire lorsque $\sum_{j = 1}^k {d_j^{-1}} = 1$, alors le degr{\'e}
est au moins {\'e}gal {\`a}~10.

Concr{\`e}tement, notons que la fraction rationnelle~$R_0$ d{\'e}finie
ci-dessus, est de de\-gr{\'e}~$d \ge 5$, et qu'elle est donn{\'e}e
par~\eqref{e:general} lorsque~$k = 2$, $d_1 = d - 2$, $d_2 = 2$ et
$a_2 = a$.  D'autre part, si $k = 3$, $d_1 = 2$, $d_2 = 4$, $d_3 = 4$
et $a_2, a_3 \in K$ v{\'e}rifient $0 < |a_2| = |a_3|^{2/3} < 1$, alors la
fraction rationnelle
\begin{equation}\label{e:exR}
R_1(z) = z^2 \frac{1 + (a_3z)^8}{1 + (a_2 z)^6}~,
\end{equation}
est de degr{\'e}~10, et si l'on note $I \= \{ \cS(t), \, t \in [0, - 2
\log |a_2|] \}$, alors l'application~$R : I \to I$ est affine par
morceaux, Bernoulli, et le lemme implique que $J_{R_1} = I$
et~$h_{\rho_{R_1}}(R_1) < \htop(R_1)$ (voir Figure~\ref{f:contreexemple connexe}).  
\begin{figure}[ht]\label{f:contreexemple connexe}
\centering \input{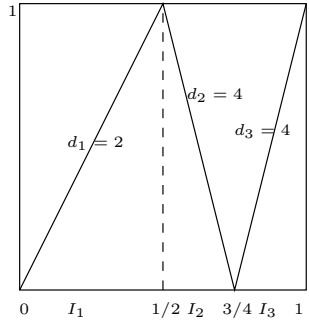}
\caption{Action de $R_1$ avec $|a_2| = |a_3|^{2/3}=e^{-2}$}
\end{figure}

\begin{qst}
  Les exemples pr{\'e}c{\'e}dents motivent les questions suivantes.
  Existe-t-il un polyn{\^o}me (resp. une fraction rationnelle de degr{\'e} au plus
  4, ou une fraction rationnelle de degr{\'e} au plus~9 et d'ensemble de Julia
  connexe) dont la mesure d'{\'e}quilibre ne soit pas d'entropie
  maximale~?
\end{qst}

\begin{proof}[D{\'e}monstration du Lemme~\ref{l:Bernoulli}]
On peut supposer que $k\ge 2$. Sinon $J$ est r\'eduit \`a un point
et $R$ a bonne r\'eduction dans une coordonn\'ee ad\'equate.
  Soient~$I_1, \ldots, I_k$ les segments dans~$I$ tels que pour cha\-que
  $j = 1, \ldots, k$ l'application $R : I_j \to I$ soit bijective et
  affine de pente~$d_j$.  Alors le degr{\'e} local de~$R$ en tout point
  dans l'int{\'e}rieur de~$I_j$ est au moins~$d_j$.  Comme par hypoth{\`e}se
  on a~$\sum_{j = 1}^k d_j = \deg(R)$, on en d{\'e}duit que pour chaque~$j =
  1, \ldots, k$ le degr{\'e} local de~$R$ en chaque point de l'int{\'e}rieur
  de~$I_j$ est {\'e}gal {\`a}~$d_j$, et que la pr{\'e}image par~$R$ de
  l'int{\'e}rieur de~$I$ est contenue dans~$I$.  Comme~$R$ est continue,
  on a~$R^{-1}(I) \subset I$.  La Proposition~\ref{prop:generalites
    Julia} implique alors que l'ensemble de Julia de~$R$ est contenu
  dans l'ensemble invariant maximal~$J_{R|_D}$ de~$R : D \to I$.
  Comme ce dernier ensemble est le plus petit sous-ensemble compact
  de~$I$ compl{\`e}tement invariant par~$R|_D$, on a
  $J_R = J_{R|_D}$.  Par le Th{\'e}or{\`e}me~\ref{t:estimation htop} on a donc
$$
\htop(R) = \htop(R|_{J_R}) = \htop(R|_{J_{R|_D}}) = \log k ~.
$$

Comme le degr{\'e} local de~$R$ est constant {\'e}gal {\`a}~$d_j$ sur l'int{\'e}rieur
de~$I_j$, la partie~2 du Lemme~\ref{L-calcul-jac} implique que le Jacobien
$\Jac_{\rho_R}$ de la mesure~$\rho_R$ est {\'e}gal {\`a} $\deg(R)/d_j$ sur un
sous-ensemble de mesure pleine de~$I_j$.  On a alors $\rho_R(I_j) =
d_j / \deg(R)$, et par la formule de Rokhlin,
$$
h_{\rho_R}(R)
=
\int \Jac_{\rho_R} \, d \rho_R
=
\sum_{j = 1}^k \frac{d_j}{\deg(R)} \log \left( \frac{d_j}{\deg(R)} \right) ~.
$$
L'in{\'e}galit{\'e} et la derni{\`e}re assertion du lemme sont alors des
cons{\'e}quences de la convexit{\'e} stricte de la fonction $x \mapsto - x \log
x$.
\end{proof}


\subsection{Deux exemples de polyn{\^o}mes en caract{\'e}ristique mixte}\label{ss:exemple sauvage}

Nous donnons enfin deux exemples de polyn{\^o}mes sur $\C_p$ le compl{\'e}t{\'e}
de la cl{\^o}ture alg{\'e}brique du corps $\Q_p$ muni de la norme $p$-adique.
La particularit{\'e} de ce corps est le fait que sa caract{\'e}ristique est
nulle, tandis que sa caract{\'e}ristique r{\'e}siduelle est {\'e}gale {\`a} $p>1$.
Rappelons que $\HKo$ d\'esigne l'ensemble des points non singuliers de $\HK$, c'est-\`a-dire l'union 
des points rationnels et irrationnels.

Le premier exemple est extrait de~\cite[Exemple~6.3]{R1}.
\begin{Prop}\label{p:ex1}
  Soit $P(z) = p^{-1} ( z^p - z^{p^2})$.  Alors l'ensemble de Julia de
  $P$ est inclus dans le sous-ensemble $\{ \diam = p^{-1/(p-1)}\}
  \subset \HK$. De plus, il existe une application continue $\pi: J_P
  \to \{ 1, \cdots, p\}^\N$ conjuguant $P|_{J_P}$ au d{\'e}calage.  La
  mesure d'{\'e}quilibre est l'unique mesure d'entropie maximale,
  d'entropie {\'e}gale {\`a} $\log p$. Les points de $\HKo\cap J_P$ sont tous 
pr\'ep\'eriodiques, et les points p\'eriodiques sont tous r\'epulsifs. En particulier, 
presque tout point pour la mesure
  d'{\'e}quilibre appartient {\`a} $\HK\setminus \HKo$.
\end{Prop}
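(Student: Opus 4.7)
Mon plan commence par l'analyse locale au voisinage du point fixe de Berkovich $\cS^* \in \HKo$ associé à la boule $\bar{B}(0, \rho)$, où $\rho \= p^{-1/(p-1)}$. L'action de $P$ sur le segment $[0,\infty] \subset \PKber$ étant, en terme de la valuation $v = -\log|z|/\log p$, donnée par $v \mapsto pv - 1$ sur $\{v > 0\}$ et $v \mapsto p^2 v - 1$ sur $\{v < 0\}$, ce segment n'admet que $\cS^*$ comme point fixe (avec $v = 1/(p-1)$). Après le changement de coordonnée $z = \pi w$ pour un $\pi \in K$ vérifiant $\pi^{p-1} = p$, la fraction rationnelle se récrit en $P(\pi w)/\pi = w^p - p^p w^{p^2}$, dont la réduction modulo l'idéal maximal est l'application de Frobenius $\tilde{w} \mapsto \tilde{w}^p$ sur $\overline{\mathbb{F}_p}$. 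Ceci impliquerait directement $\deg_P(\cS^*) = p$.

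L'étape suivante consisterait à décrire explicitement $P^{-1}(\cS^*)$ par l'analyse du polygone de Newton de $P(z) - a$ pour $|a| = \rho$. Je m'attends à obtenir $p$ préimages au total : la boule $\bar{B}(0,\rho)$ elle-même (de degré local $p$), et $p-1$ boules $\bar{B}(z_0^{(\alpha)}, \rho)$ paramétrées par $\alpha \in \mathbb{F}_p^*$, où $z_0^{(\alpha)}$ est un représentant des classes d'équivalence (modulo la relation $|z - z'| \le \rho$) des racines de $z^{p^2 - p} = 1$ de résidu $\alpha$. Chacune de ces préimages aurait degré local $p$ et resterait au diamètre $\rho$. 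Par auto-similarité --- la translation $z \mapsto z - z_0^{(\alpha)}$ ramenant l'analyse locale au voisinage de chaque préimage à celle au voisinage de $\cS^*$, grâce aux relations $\alpha^p = \alpha$ et $\alpha^{p^2} = \alpha$ pour $\alpha \in \mathbb{F}_p$ --- je propagerais cette construction par récurrence : $P^{-n}(\cS^*)$ serait un ensemble de $p^n$ points de Berkovich rationnels, tous au diamètre $\rho$. Par le Théorème~\ref{t:equi-measures}, la mesure d'équilibre $\rho_P$ est la limite vague de $p^{-2n} P^{n*}[\cS^*]$, donc $J_P = \supp(\rho_P) \subset \overline{\bigcup_n P^{-n}(\cS^*)} \subset \{\diam = \rho\}$.

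Je construirais ensuite le codage $\pi : J_P \to \{1, \ldots, p\}^{\N}$ via la partition de Markov naturelle donnée par les $p$ préimages de $\cS^*$. Le caractère uniformément expansif de $P$ sur $J_P$ (conséquence de $\deg_P \equiv p \ge 2$) assurerait que $\pi$ est un homéomorphisme conjuguant $P|_{J_P}$ au décalage. On obtiendrait $\htop(P|_{J_P}) = \log p$, d'où $\htop(P) = \log p$ par le Théorème~\ref{t:estimation htop}. Comme $\mdeg(P) = p$, le Théorème~\ref{t:estimation h metrique} donnerait $h_{\rho_P}(P) \ge \log(p^2/p) = \log p$, ce qui, combiné à $h_{\rho_P}(P) \le \htop(P) = \log p$, livre l'égalité et la maximalité de l'entropie de $\rho_P$. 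L'unicité résulterait du fait que le décalage de Bernoulli à $p$ symboles admet une unique mesure d'entropie maximale (la mesure produit uniforme), dont le relèvement par $\pi^{-1}$ coïncide avec $\rho_P$.

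Les assertions finales se déduiraient alors aisément : les points de $\HKo \cap J_P$ sont précisément les éléments des $P^{-n}(\cS^*)$, $n \ge 0$ (donc rationnels et préperiodiques) ; tout point périodique $\cS \in J_P$ de période $n$ satisfait $\deg_{P^n}(\cS) = p^n \ge 2$, donc est répulsif ; et l'ensemble dénombrable des suites éventuellement périodiques étant de mesure de Bernoulli nulle, $\rho_P$-presque tout point de $J_P$ correspond à une suite non éventuellement périodique, nécessairement associée à un point singulier de $\HK \setminus \HKo$. La principale difficulté sera la justification rigoureuse de l'auto-similarité : il faudra contrôler uniformément les perturbations introduites dans $P$ par les translations $z \mapsto z - z_0^{(\alpha)}$, afin de garantir que le polygone de Newton conserve la même structure à chaque niveau et donne toujours $p$ préimages de diamètre $\rho$. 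Cette analyse s'inspirerait de~\cite{R1}, Exemple~6.3.
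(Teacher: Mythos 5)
Votre approche diverge de celle de l'article sur plusieurs points, mais l'essentiel de la construction (structure des pr\'eimages, codage de Markov, calcul d'entropie) va dans le m\^eme sens. Voici les diff\'erences et un \'ecart r\'eel.

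L'article it\`ere \`a partir du point de Gauss $\cScan$, et non du point fixe $\cS^*$ de diam\`etre $\rho = p^{-1/(p-1)}$. Il \'etablit d'abord l'estimation uniforme $|P(z+w) - P(w)| = p|z|^p$ pour $\rho < |z| < 1$ et $|w| \le 1$~; cela donne imm\'ediatement que la pr\'eimage de toute boule ferm\'ee $B \subset \{|z| \le 1\}$ de diam\`etre $\ge \rho$ est la r\'eunion de $p$ boules ferm\'ees disjointes, chacune de diam\`etre $(p^{-1}\diam B)^{1/p}$ et sur laquelle $P$ est de degr\'e $p$. La r\'ecurrence produit alors les boules $B'_{i_1,\ldots,i_k}$ de diam\`etre $p^{-(1-p^{-k})/(p-1)}$ d\'ecroissant strictement vers $\rho$, et tout le codage en d\'ecoule sans avoir \`a contr\^oler d'\og auto-similarit\'e \fg : l'estimation uniforme ci-dessus rend cette \'etape gratuite, l\`a o\`u votre plan reconna\^\i t une difficult\'e (propagation du polygone de Newton par translations). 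Les deux itin\'eraires devraient aboutir pour cette partie, mais celui de l'article \'evite justement l'\'ecueil que vous signalez.

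Il y a en revanche une erreur dans les assertions finales~: vous affirmez que $\HKo \cap J_P = \bigcup_{n \ge 0} P^{-n}(\cS^*)$. C'est faux~: via la conjugaison au d\'ecalage, $\cS^*$ correspond \`a une suite constante, et $\bigcup_n P^{-n}(\cS^*)$ correspond aux suites \emph{\'eventuellement constantes} de symbole fix\'e. Or tout point p\'eriodique de $J_P$ de p\'eriode $m \ge 2$ est r\'epulsif, donc un point de branchement de l'arbre, donc rationnel, donc dans $\HKo \cap J_P$~; mais sa suite est p\'eriodique non constante et il n'appartient pas \`a $\bigcup_n P^{-n}(\cS^*)$. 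La caract\'erisation correcte est~: $\cS_\alpha \in \HKo$ si et seulement si $\alpha$ est pr\'ep\'eriodique pour le d\'ecalage. Le sens direct (pr\'ep\'eriodique $\Rightarrow$ rationnel) est facile, comme ci-dessus. Le sens r\'eciproque (non pr\'ep\'eriodique $\Rightarrow$ singulier) est le point d\'elicat~: il s'agit de montrer qu'une suite d\'ecroissante de boules dont les diam\`etres restent minor\'es par $\rho > 0$ a une intersection vide lorsque la suite de symboles n'est pas \'eventuellement p\'eriodique, ce qui est non trivial puisque $\C_p$ n'est pas sph\'eriquement complet. L'article renvoie pour ce point \`a \cite[Exemple~6.3]{R1}~; votre preuve ne l'aborde pas. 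Notez que la conclusion $\rho_P(\HKo) = 0$ survit (l'ensemble des suites \'eventuellement p\'eriodiques est d\'enombrable et de mesure de Bernoulli nulle), mais l'\'enonc\'e interm\'ediaire que vous utilisez pour l'obtenir est incorrect et le v\'eritable argument manque.
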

Une l{\'e}g{\`e}re modification du polynome pr{\'e}c{\'e}dent permet d'obtenir
le r{\'e}sultat suivant.
\begin{Prop}\label{p:ex2}
  Soit $Q(z) = p^{-1} ( z^p - z^{p^2})+ pz^{p^2+1}$. Alors la
  restriction $Q|_{J_Q}$ n'est pas localement injective (pour la
  topologie induite par la m{\'e}trique $\dpk$) sur un ensemble non
  d{\'e}nombrable.
\end{Prop}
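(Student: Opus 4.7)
The plan is to parallel the analysis of Proposition~\ref{p:ex1} while exploiting the wild ramification introduced by the extra term~$pz^{p^2+1}$. First, I would verify that the coarse dynamical picture established for~$P$ survives for~$Q$: on the closed disk $\bar{B}(0,p^{-1/(p-1)})$, the perturbation satisfies $|pz^{p^2+1}| \le p^{-p(p+1)/(p-1)} \ll |p^{-1}z^p| = p^{-1/(p-1)}$, so that the Newton-polygon arguments of~\cite[Exemple~6.3]{R1} go through essentially verbatim and yield $J_Q \subset \{\diam = p^{-1/(p-1)}\}$ together with a continuous semi-conjugation $\pi_Q : J_Q \to \{1,\ldots,p\}^{\N}$ onto the full one-sided shift on $p$~symbols.

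Next, I would exhibit pairs of distinct Berkovich points of $J_Q$ with the same $Q$-image and arbitrarily small mutual $\dpk$-distance. For each $\cS \in J_Q$, represent $\cS$ by a decreasing sequence of closed disks $\bar{B}(a_n,r_n)$. The polynomial $Q(a_n+h) - Q(a_n)$ has coefficients
$$
c_k(a_n) = p^{-1}\binom{p}{k}a_n^{p-k} - p^{-1}\binom{p^2}{k}a_n^{p^2-k} + p\binom{p^2+1}{k}a_n^{p^2+1-k},
$$
and a careful reading of its Newton polygon — taking into account both the tame sizes of the $\binom{p^j}{k}/p$ and the genuinely new slope produced by $p\binom{p^2+1}{k}$ when $k$ lies in an appropriate intermediate range — should exhibit two distinct closed subdisks $\bar{B}(\alpha^{(n)}_j, s_n)$, $j=1,2$, contained in $\bar{B}(a_n,r_n)$ and having the same $Q$-image. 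The two corresponding points of $\HK$ give elements of $J_Q$ at $\dpk$-distance of order~$s_n$, both within $\dpk$-distance of order~$r_n$ of~$\cS$, and sharing the same $Q$-image.

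Finally, combining total invariance of~$J_Q$ with the symbolic coding~$\pi_Q$, I would verify that the set of codings $\sigma \in \{1,\ldots,p\}^{\N}$ for which the construction above succeeds with $s_n \to 0$ is of full measure for the push-forward of the Bernoulli measure, hence uncountable; the corresponding uncountable set of points $\cS \in J_Q$ yields the required failures of local injectivity of $Q|_{J_Q}$ for the $\dpk$-topology. The main obstacle will be the Newton-polygon step: one must check that the wild slope produced by $p\binom{p^2+1}{k}a_n^{p^2+1-k}$ is genuinely attained by a pair of disjoint preimage disks lying in $\bar{B}(a_n,r_n)$ and is not absorbed by the tame ramification of~$P$ coming from the $p$-adic sizes of $\binom{p}{k}/p$ and $\binom{p^2}{k}/p$. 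This requires delicate bookkeeping of several competing $p$-adic valuations as functions of~$k$ and of~$|a_n|$, together with an independent check that the two branches remain in $J_Q$ and do not escape into the Fatou set~$F_Q$.
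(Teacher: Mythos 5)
Your proposal is built on a claim that would \emph{refute} the proposition rather than prove it. You assume at the outset that the ``coarse dynamical picture'' of~$P$ survives for~$Q$, namely that $J_Q \subset \{\diam = p^{-1/(p-1)}\}$ and that $Q|_{J_Q}$ is topologically conjugate to the full one-sided shift on~$p$ symbols. But the full shift is locally injective (cylinder sets are mapped homeomorphically), and conjugacy preserves local injectivity; if your first step held, $Q|_{J_Q}$ would be \emph{locally injective everywhere}, contradicting the statement. This is exactly what happens for~$P$: Proposition~\ref{p:ex1} shows $P|_{J_P}$ is conjugate to the shift, hence locally injective, and the whole point of~$Q$ is that this fails.

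The new term $pz^{p^2+1}$ does not act as a small perturbation that leaves the Julia set intact. Its effect is global: it creates a repulsive fixed point~$z_0$ with $|z_0| = p^2$ (for $|z| = p^2$ one has $|p^{-1}z^{p^2}| = |pz^{p^2+1}| > \max\{|p^{-1}z^p|, |z|\}$, and $|Q'(z)| = |pz^{p^2}| > 1$), a point which $P$ does not possess. Pulling~$z_0$ back under~$Q$ into the open unit disk and iterating produces rational points of~$J_Q$ of diameter strictly \emph{less} than $p^{-1/(p-1)}$, accumulating from below (in $\dhyp$) on every point of~$J_P \subset J_Q$; this is the content of the paper's Lemme~\ref{l:interm}. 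The non-injectivity is then read off structurally: near a preimage $\cS'_0$ of a $J_P$-point in $\{|\cS|=1\}$, the $p$ branches of the tree descending from a nearby point of diameter $p^{-1/(p-1)}$ each contain a distinct $Q$-preimage of a small-diameter $J_Q$-point, and these all lie within arbitrarily small $\dhyp$-distance of~$\cS'_0$. The uncountable set is supplied by the Cantor set $J_P$ and its preimage in $\{|\cS|=1\}$.

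Your Newton-polygon computation around points of $A = \{|\cS|\le 1,\ \diam = p^{-1/(p-1)}\}$, while correct that $|pz^{p^2+1}|$ is negligible there, would only re-derive that $Q$ and $P$ coincide on~$A$, and thus reproduce the locally injective behavior of~$P$. Also note that $p^2+1$ is coprime to~$p$, so the extra term contributes \emph{tame}, not wild, ramification; the wildness in both $P$ and $Q$ comes from~$z^p$ and~$z^{p^2}$. In short, the mechanism you should be hunting for is not a new Newton slope inside the unit disk, but the backward invasion of the unit disk by the orbit of the new repulsive fixed point at $|z|=p^2$.
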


\begin{proof}[D\'emonstration de la Proposition~\ref{p:ex1}]
  On montre que $|P(z+w) - P(w)| = p |z|^p$ pour tout $p^{-1/(p-1)} <
  |z| < 1$ et tout $|w| \le 1$. En particulier la pr{\'e}image de toute
  boule ferm{\'e}e $B \subset \{ |z| \le 1 \}$ et de diam{\`e}tre au moins
  $p^{-1/(p-1)}$ est une r{\'e}union de $p$ boules ferm{\'e}es disjointes
  $B_i$ avec $ \diam (B_i) = ( p^{-1} \diam (B))^{1/p} >
  p^{-1/(p-1)}$, et $P: B_i \to B$ est de degr{\'e} $p$.
  
  En particulier, posons $P^{-1} \{ |z| \le 1 \} = B'_1 \cup \cdots
  \cup B'_p$, et notons $B'_{i_1, \cdots , i_k}$ la composante de $P^{-n} \{
  |z| \le 1 \}$ telle que $P^j( B'_{i_1, \cdots , i_k}) \subset
  B'_{i_j}$ pour tout $j$. Cette composante est une boule ferm{\'e}e de
  rayon $p^{-(1-p^{-k})/(p-1)}> p^{-1/(p-1)}$, et pour toute suite
  infinie $\a = \{ i_k\} \in \{ 1, \cdots , p\}^\N$, la suite
  $B'_{i_1, \cdots , i_k}$ est d{\'e}croissante avec $k$. Les normes
  $\cS_{i_1, \cdots, i_k}\in \HK$ associ{\'e}es sont donc d{\'e}croissantes, et
  convergent vers un point $\cS_\a$ de $\HK \cap J_P$ tel que $\diam
  (\cS_\a) = p^{-1/(p-1)}$.

  Tout point hors de $\{ |z| >1\}$ est attir{\'e} par l'infini, donc on a l'inclusion
  $J_P
  \subset K_P\= \bigcap_{n\ge0} P^{-n} \{ |z| \le 1 \}$. Il v\'erifie de plus que
  $J_P$ est {\'e}gal au bord de $K_P$.  L'application $ \pi : J_P \to \{
  1, \cdots , p\}^\N$ d\'efinie en envoyant $\cS$ sur la suite $(i_k)$ telle que
  $P^n(\cS) \in B'_{i_n}$ pour tout $n\ge 0$ est alors continue et son
  inverse est donn{\'e}e par $\a \to \cS_\a$.  On conclut donc que $P$ est
  conjugu{\'e} au d{\'e}calage par $\pi$.

  Calculons maintenant l'image de la mesure d'{\'e}quilibre par $\pi_*$.
  On v{\'e}rifie par r{\'e}\-cur\-ren\-ce que $p^{-2n} P^{n*} [ \cScan] =
  \sum_{(i_1, \cdots, i_n)} p^{-n} [\cS_{i_1, \cdots, i_n}]$ pour tout
  entier $n$. Comme $p^{-2n} P^{n*} [ \cScan] \to \rho_P$, on voit donc que
  $\pi_* \rho_P$ s'identifie {\`a} la mesure {\'e}quilibr{\'e}e sur l'espace des
  suites de symboles $\{ 1, \cdots, p \}^\N$.

  Enfin on montre que $\cS_\a \in \HKo$ si et seulement si $\a$ est pr\'ep{\'e}riodique pour
  le d{\'e}calage.
 Cette assertion est d{\'e}montr{\'e}e pr{\'e}cis{\'e}ment
  dans~\cite[Exemple~6.3]{R1}. Le fait que les points p\'eriodiques sont r\'epulsifs r\'esulte 
de $A \subset \{ \deg_P = p \}$.
\end{proof}

\begin{proof}[D\'emonstration de la Proposition~\ref{p:ex2}]
  Notons que  $ Q = P + p
  z^{p^2}$. On v{\'e}rifie  tout d'abord que $ P =Q$ sur l'ensemble $
A\=  \{ |\cS |\le 1 \} \cap \{ \diam = p^{-1/(p-1)}\}$. On a vu sur
  l'exemple pr{\'e}c{\'e}dent que $J_P \subset A$, et que les points r\'epulsifs \'etaient denses dans
$J_P$. On en d\'eduit que  donc $J_P \subset J_Q$. Dans la suite, on notera $\hprat = \HKrat$ pour $K = \C_p$.
\begin{Lem}\label{l:interm}
  Pour tout $\cS_0 \in J_P$, et tout $\e>0$, il existe un $\cS \in
  J_Q\cap \hprat$ tel que $\dhyp(\cS, \cS_0)\le \e$ et $ \diam(\cS) <
  p^{-1/(p-1)}$.
\end{Lem}
Prenons $\cS_0$ dans $J_Q$ et $\cS'_0$ une pr{\'e}image de $\cS_0$ par $Q$ contenue dans $\{ |\cS| =1
\}$. Nous allons montrer que pour tout $\e>0$, la restriction de $Q$ {\`a}
la boule de centre $\cS'_0$ et de diam{\`e}tre $\e$ (pour la m{\'e}trique
$\dhyp$) n'est pas injective.  Pour cela, on utilise le lemme
pr{\'e}c{\'e}dent et on prend $\cS_1\in J_P \cap \hprat$ tel que $\dhyp
(\cS_1, \cS_0) \le \e$, et $\diam(\cS_1)< p^{-1/(p-1)}$. Un tel point
admet une pr{\'e}image $\cS'_1$ dont la distance {\`a} $\cS'_0$ est $\le p
\times \e$. Il est facile de voir que $Q$ envoie toute boule centr{\'e}e
en un point $|z|=1$ et de diam{\`e}tre $ p^{-1/(p-1)}$ sur une boule de
diam{\`e}tre $p^{-1/(p-1)}$.  On en d{\'e}duit l'existence d'un point $\cS'_2
\in ]\cS'_0, \cS'_1]$ de diam{\`e}tre {\'e}gal {\`a} $p^{-1/(p-1)}$.
 \begin{figure}[ht]\label{f:pos}
 \centering \input{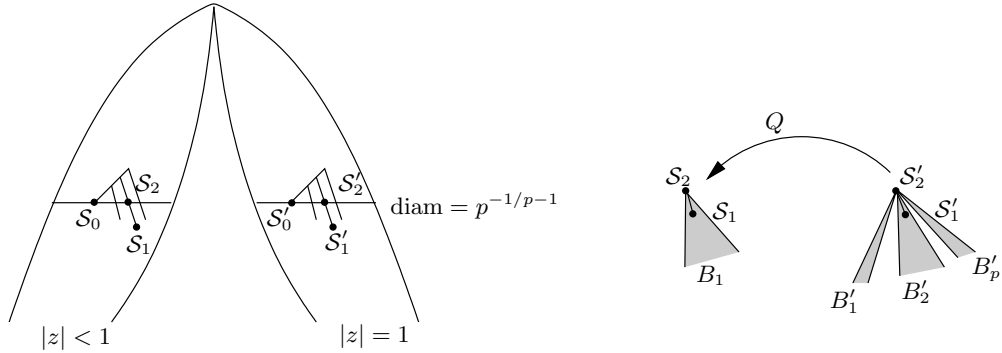}
\caption{Positions des points $S_i$ et $S_i'$}
 \end{figure}
Notons que $\deg_P(\cS) =1$ d\`es que $\diam (\cS) < p^{-1/(p-1)}$. Comme $Q=P$
dans un voisinage (pour la m\'etrique $\dhyp$) de $A$, la
boule ouverte $B_1$ associ{\'e}e {\`a} $\cS_2$ et
contenant $\cS_1$ admet  $p$ pr{\'e}images $B'_i$ dont les adh{\'e}rences
contiennent $\cS'_2$ (voir Figure~\ref{f:pos}). La restriction de $P$ {\`a} chacune des $B'_i$ est
injective et on en d{\'e}duit donc que $\cS_1$ admet une pr{\'e}image par $Q$
dans chaque $B'_i$. Chacune de ces pr{\'e}images est {\`a} $\dhyp$-distance 
au plus $\e$ de $\cS'_2$ donc au plus $(p+1)\e$ de $\cS'_*$.
Nous avons donc montr{\'e} que $Q$ n'est injective sur aucune boule
pour $\dhyp$ centr{\'e}e en $\cS'_0$.

 On en d{\'e}duit que $Q$ ne peut {\^e}tre localement
injective en toute pr{\'e}image de $J_P$ par $Q$ contenue dans $\{|\cS|=1
\}$. L'ensemble $J_P$ {\'e}tant de type de Cantor, on en d{\'e}duit la
non locale injectivit{\'e} de $Q$ sur un ensemble non d{\'e}nombrable.
\end{proof}
  \begin{proof}[D\'emonstration du Lemme~\ref{l:interm}]
    On traite tout d'abord le cas du
    point de $\hprat$ associ{\'e} {\`a} la boule centr{\'e}e en $0$ et de diam{\`e}tre
    $p^{-1/(p-1)}$.

    On remarque tout d'abord l'existence d'un point fixe $z_0$ de
    module $p^2$. Pour voir cela, on applique~\cite[Theorem~1,
    p.307]{Robert} en notant que pour $|z| = p^2$, on a $|p^{-1}
    z^{p^2}| = |p z ^{p^2+1}| > \max \{ |p^{-1} z^p|, |z| \}$. Un
    calcul montre que $|Q'(z)|= |p z^{p^2}|>1$ pour $|z| = p^2$, donc
    $z_0$ est r{\'e}pulsif.  En particulier il appartient {\`a} $J_Q$.  On
    remarque maintenant que $Q^2 \{ |z| < 1 \} = Q \{ |z| < p \} = \{
    |z| < p^{p^2+1}\}$. On peut donc trouver une pr{\'e}image de $z_0$ par
    $Q^2$ dans $ \{ |z| <1 \}$: on la note $z_1$.
  
    Soit $B'$ une boule ferm{\'e}e contenant $z_0$ de rayon $\e$
    arbitrairement petit. Pour $N$ assez grand, $Q^N (B')$ contient
    $J_Q$ car $z_0 \in J_Q$, donc il existe une boule ferm{\'e}e $B$ de
    rayon $\le \e$ dont l'image par $Q^N$ est la boule de rayon
    $p^{-1/(p-1)}$. Le point de $\hprat$ associ{\'e} {\`a} $B$ est donc dans
    $J_Q$, et la pr{\'e}image de $B$ par $Q^2$ est une boule ferm{\'e}e $B_0$
    de rayon comparable {\`a} $\e$ dont le point de $\hprat$ associ{\'e} est
    aussi dans $J_Q$. Quitte {\`a} prendre $\e$ assez petit, on peut donc
    s'arranger pour que $B_0$ soit de diam{\`e}tre $< p^{-1/(p-1)}$ et
    incluse dans $\{ |z| <1 \}$.
 \begin{figure}[h]
 \centering \input{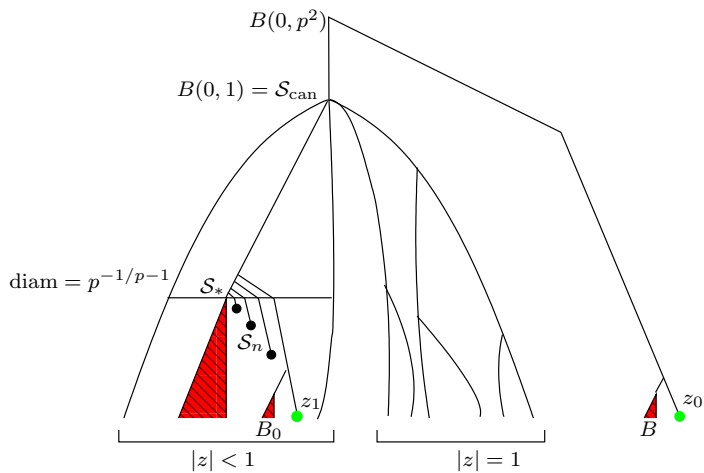}
\caption{Preuve du Lemme~\ref{l:interm}}
 \end{figure}
    Soit $B_n= B(z_n,r_n)$ une suite de boules de telle sorte que
    $Q(B_{n+1}) = B_n$ et $B_n \subset \{ |z| < 1\}$. Par r{\'e}currence,
    on montre que $\diam (B_{n+1}) = (\diam(B_n)/p)^{1/p} <
    p^{-1/(p-1)}$ donc $\diam(B_n) \to p^{-1/(p-1)}$. De plus pour tout
    point $|z|<1$, on a $|Q(z)| = p |z|^p$, donc les centres de $B_n$
    v{\'e}rifient aussi $|z_n| \to p^{-1/(p-1)}$. Si $\cS_n$ est le point
    de $\hprat$ associ{\'e} {\`a} $B_n$, ces estimations impliquent
    $\dhyp(\cS_n , \cS_*) \to 0$, ce qui nous permet de conclure dans
    le cas $\cS_0 = \cS_*$.

\smallskip

Soit maintenant $\cS_0$ un point arbitraire de $J_P$, et $\e>0$.
L'ensemble $U_\e = J_P \cap \{ \dhp (\cS_0, \cdot) < \e \}$ d{\'e}finit un
voisinage ouvert de $\cS_0$ dans $J_P$. On a vu que $P$ restreint {\`a}
cet ensemble {\'e}tait conjugu{\'e} au d{\'e}calage, donc il existe un entier
$N\gg 1$ tel que $P^N (U_\e) \supset J_P$.  Comme $P=Q$ sur $J_P$ et
que $Q$ est ouverte pour $\dhp$, on conclut que $Q^N \{ \dhp (\cdot,
\cS_0) < \e \}$ contient un voisinage de $\cS_*$ (toujours pour la
m{\'e}trique $\dhp$). On peut donc trouver un point $\cS' \in J_Q$ dans ce
voisinage de diam{\`e}tre $<p^{-1/(p-1)}$.  Sa pr{\'e}image $\cS$ dans $\{ \dhp
(\cdot, \cS_0) \le \e \}$ est de diam{\`e}tre $<p^{-1/(p-1)}$ et appartient \`a
l'ensemble de Julia de $Q$ car cet ensemble est totalement invariant.
On a donc trouv{\'e} un point $\cS\in J_Q$ tel que $\dhp (\cS, \cS_0) \le
\e$ et $\diam(\cS) < p^{-1/(p-1)}$.
\end{proof}


\end{document}